\newcommand{\Hom}[1]{\mathrm{Hom} ({#1})}
\newcommand{\End}[1]{\mathrm{End} ({#1})}
\DeclareMathOperator{\tr}{tr}
\DeclareMathOperator{\rk}{rk}
\DeclareMathOperator{\Hess}{Hess}
\newcommand{\N}{\mathbb{N}}
\newcommand{\Z}{\mathbb{Z}}
\newcommand{\R}{\mathbb{R}}
\newcommand{\C}{\mathbb{C}}
\newcommand{\one}{\mathbbm{1}}
\newcommand{\supp}[1]{\mathrm{supp} \left({#1}\right)} 
\newcommand{\singsupp}[1]{\mathrm{singsupp} \left({#1}\right)}
\renewcommand{\Im}{\mathrm{Im}}
\newcommand{\grad}{\mathrm{grad}}
\renewcommand{\div}{\mathrm{div}}
\renewcommand{\Box}{\square}
\newcommand{\WF}[1]{\mathrm{WF} ({#1})}
\newcommand{\WFPrime}[1]{\mathrm{WF}^{\prime} ({#1})}
\newcommand{\ES}[1]{\mathrm{ES} ({#1})} 
\newcommand{\Feyn}{\mathrm{F}}
\newcommand{\aFeyn}{\mathrm{aF}}
\newcommand{\ret}{\mathrm{ret}}
\newcommand{\adv}{\mathrm{adv}}
\newcommand{\comSecE}{C_{\mathrm{c}}^{\infty} (M; E)}
\newcommand{\secE}{C^{\infty} (M; E)}
\newcommand{\dualComSecE}{\mathcal{D}^{\prime} (M; E)}
\newcommand{\dualSecE}{\mathcal{E}^{\prime} (M; E)}
\newcommand{\advGreenOp}{G^{\mathrm{adv}}} 
\newcommand{\retGreenOp}{G^{\mathrm{ret}}}
\newcommand{\leftParametrix}{G_{\mathrm{L}}}
\newcommand{\rightParametrix}{G_{\mathrm{R}}}
\newcommand{\FeynGreenOp}{\mathsf{G}^{\mathrm{F}}} 
\newcommand{\FeynParametrix}{G^{\mathrm{F}}}
\newcommand{\antiFeynParametrix}{G^{\mathrm{aF}}}
\newcommand{\dotCoTanM}{\dot{T}^{*} M}
\newcommand{\dotCoTanN}{\dot{T}^{*} N}
\newcommand{\dotCoTanMN}{\dot{T}^{*} (M \times N)}
\newcommand{\coLightBun}{\dot{T}_{0}^{*} M}
\newcommand{\dotCoTanRn}{\dot{T}^{*} \! \mathbb{R}^{n}}
\newcommand{\ms}{\scriptscriptstyle}
\newcommand{\pr}{\mathrm{pr}}
\newcommand{\Rn}{\mathbb{R}^{n}} 
\newcommand{\Rd}{\mathbb{R}^{d}}
\newcommand{\CN}{\mathbb{C}^{N}}
\newcommand{\xxiNot}{(x_{0}, \xi^{0})}
\newcommand{\PsiDO}[2]{\Psi \mathrm{DO}^{{#1}} ({#2})}
\newcommand{\PsiDONU}[1]{\Psi \mathrm{DO}^{{#1}} (U',\mathrm{Mat}(N))}
\newcommand{\PsiDON}[1]{\Psi \mathrm{DO}^{{#1}} (\mathbb{R}^{n},\mathrm{Mat}(N))}
\DeclareMathOperator{\ri}{i}
\newcommand{\re}{\mathrm{e}}
\newcommand{\rd}{\mathrm{d}} 
\newcommand{\Maslov}{\mathbb{M}}
\newcommand{\xtheta}{(x; \theta)}
\newcommand{\xytheta}{(x, y; \theta)}
\newcommand{\xthetaNot}{(x_{0}; \theta^{0})}
\newcommand{\xxi}{(x, \xi)}
\newcommand{\yeta}{(y, \eta)}
\newcommand{\xxiyeta}{(x, \xi; y, \eta)}
\newcommand{\xxiyetaNot}{(x_{0}, \xi^{0}; y_{0}, \eta^{0})}
\newcommand{\parDeri}[2]{\frac{\partial {#2}}{\partial {#1}}}
\newcommand{\sgn}{\mathrm{sgn}}
\newcommand{\Char}[1]{\mathrm{Char} (#1)}
\newcommand{\cD}{\mathcal{D}}
\newcommand{\cE}{\mathcal{E}}
\newcommand{\cU}{\mathcal{U}}
\newcommand{\cV}{\mathcal{V}}
\newcommand{\cW}{\mathcal{W}}
\newcommand{\sU}{\mathscr{U}}
\newcommand{\symb}[1]{\sigma_{\ms{#1}}}
\newcommand{\subSymb}[1]{\sigma^{\mathrm{sub}}_{\ms{#1}}}
\newcommand{\totSymb}[1]{\sigma^{\mathrm{tot}}_{\ms{#1}}}
\newcommand{\dVolh}{\mathrm{d} \mathsf{v}_{h}} 
\newcommand{\dVolg}{\mathrm{d} \mathsf{v}_{g}}
\newcommand{\dVol}{\mathrm{d} \mathsf{v}}
\newcommand{\nM}{n_{\ms{M}}} 
\newcommand{\nN}{n_{\ms{N}}}
\newcommand{\matN}{\mathrm{Mat} (N)}
\newtheorem{theorem}{Theorem}[section]
\newtheorem{lemma}[theorem]{Lemma}
\newtheorem{corollary}[theorem]{Corollary}
\newtheorem{proposition}[theorem]{Proposition}
\theoremstyle{definition}
\newtheorem{definition}[theorem]{Definition}
\newtheorem{remark}[theorem]{Remark}
\title[Feynman Propagators]{On microlocalisation and the construction of Feynman Propagators for normally hyperbolic operators}
\author[O.~Islam]{Onirban Islam}
\address{Institut f\"{u}r Mathematik, Universit\"{a}t Potsdam, Karl-Liebknecht-Stra\ss{}e 24-25, 14476 Potsdam, Germany.}
\email{onirban.islam@math.uni-potsdam.de}
\author[A.~Strohmaier]{Alexander Strohmaier}
\address{Institut f\"{u}r Analysis, Leibniz Universit\"{a}t Hannover, Welfengarten 1, 30167 Hannover, Germany.}
\email{a.strohmaier@math.uni-hannover.de}
\subjclass[2010]{}
\keywords{Wave-type operators, Dirac-type operators, Feynman propagators, Hadamard states}
\date{19 December 2024}
\begin{document}
\begin{abstract}
This article gives global microlocalisation constructions for normally hyperbolic operators on a vector bundle over a globally hyperbolic spacetime in geometric terms. 
As an application, this is used to generalise  the Duistermaat-H\"{o}rmander construction of Feynman propagators, therefore incorporating the most important non-scalar geometric operators.
It is shown that for normally hyperbolic operators that are selfadjoint with respect to a hermitian bundle metric, the Feynman propagators can be constructed to satisfy a positivity property that reflects the existence of Hadamard states in quantum field theory on curved spacetimes.
We also give a more direct construction of the Feynman propagators for Dirac-type operators on a globally hyperbolic spacetime. 
Even though the natural bundle metric on spinors is not positive-definite, in this case, we can give a direct microlocal construction of a Feynman propagator that satisfies positivity.
\end{abstract}
\maketitle
\setcounter{tocdepth}{1}
\tableofcontents
%
%
%
%
%
%
%
%
%
%
\section*{Introduction}
If $(M,g)$ is a globally hyperbolic spacetime then the Cauchy problem for any normally hyperbolic operator, i.e., any operator of the form 
\begin{equation*}
	\Box = - g^{ij} \partial_i \partial_j + \textrm{lower order terms}, \quad i,j = 1, \ldots, n := \dim M 
\end{equation*}
(in local coordinates using Einstein's summation convention) is well-posed. 
This implies the existence of unique advanced and retarded fundamental solutions (propagators). 
The properties of these fundamental solutions are extremely important for the understanding of classical wave propagation, such as the electromagnetic waves. 
They appear naturally because of causality: the retarded fundamental solution propagates to the future, whereas the advanced fundamental solution propagates to the past. 
In quantum field theory, the appearance of time ordering and the enforcement of positivity of energy led to the development of another type of fundamental solution, the Feynman propagator.  
It propagates positive energy solutions to the future and negative energy solutions to the past and thus combines causality with the notion of positivity of energy.
Explicit formulae for Lorentz invariant Feynman propagators for normally hyperbolic operators in Minkowski spacetime can be found in basic quantum field theory textbooks. 
The usual construction employs the Fourier transform. 
However, on a generic globally hyperbolic spacetime there is neither a Fourier transform nor any reasonable notion of energy in the absence of a global time-like Killing vector field. 
A priori, it is, therefore unclear what a Feynman propagator should be.

In the theory of partial differential equations, the notion of a parametrix is often useful in the first stage on the construction of a true fundamental solution. 
A parametrix, per se, is an inverse of the operator $\Box$ modulo smoothing operators. 
Parametrices are considered equivalent if they differ by smoothing operators.
It was a deep insight of 
Duistermaat and H\"{o}rmander~\cite{Duistermaat_ActaMath_1972}   
that there is a well-defined notion of Feynman parametrices and these parametrices are unique up to smoothing operators. 
In other words, they are unique as parametrices. 
In fact, the notion of distinguished parametrices for any general pseudodifferential operator of real principal type is given in their seminal article where they have identified a geometric notion of pseudoconvexity which allows to prove uniqueness of such parametrices. 
Roughly speaking, despite of not having any notion of energy due to the lack of a global timelike Killing vector field, there is still a microlocal notion of positivity of energy and a corresponding flow on the cotangent bundle. 
This microlocal notion can be used to characterise Feynman parametrices: they are distinguished by their wavefront sets rather than the support properties of the Fourier transform.

Feynman parametrices play an extremely important role in quantum field theory on curved spacetimes, and the theory has actually been developed to a certain extent first in the physics literature.
Canonical quantisation of linear fields can be done in two stages. 
In the first step, one constructs a field algebra from the space of solutions of the 
respective equation of motion~\cite{Dimock_CMP_1980, Dimock_AMS_1982} 
and next, some state is required to construct a Hilbert space representation of this field algebra. 
Whereas, in Minkowski spacetime, these two steps are usually combined into one owing to the existence of the vacuum state, it is more fruitful to separate them in curved spacetimes. 
The first step can be done without any problems, exactly the same way as in Minkowski spacetime but the second step necessitates a notion of a reasonable state. 
It has been realised that a state compels certain conditions in order to perform the usual operations in 
perturbative quantum field theory~\cite{Brunetti_CMP_2000, Hollands_CMP_2001, Hollands_CMP_2002}. 
One of the identified conditions is a restriction of the type of singularity that one obtains from the state, the so called Hadamard condition. 
Although Duistermaat and H\"{o}rmander were certainly aware of the developments in physics, it was realised only much later by 
Radzikowski~\cite{Radzikowski_CMP_1996} 
that the expectation values of the timeordered products with respect to states satisfying the Hadamard condition are Feynman propagators. 
In fact, the construction of a Hadamard state is equivalent to the construction of a Feynman propagator that satisfies a certain positivity property. 
The fact that this positivity property holds for parametrices was already shown by Duistermaat and H\"{o}rmander. 
This property for parametrices implies the existence of such Feynman propagators and hence of Hadamard states 
(see~\cite{Lewandowski_JMP_2022}).

Apart from its applications in physics, Feynman parametrices also play an increasingly important role in mathematics.
For instance, they appear in the context of Lorentzian index theory as inverses modulo compact operators of the Dirac operator with Atyiah-Patodi-Singer boundary conditions~\cite{Baer_AJM_2019}, 
local index theorem~\cite{Baer},   
and asymptotically static spacetimes~\cite{Shen}.  
These concepts also arise in Vasy's treatment of asymptotically hyperbolic problems, see for example~\cite{GellRedman_CMP_2016}  
and references therein, where even a non-linear problem is discussed in this context.
Moreover, the notion of Feynman propagator also comes up naturally in the Lorentzian generalisation of the Duistermaat-Guillemin-Gutzwiller trace formula~\cite{Strohmaier_AdvMath_2021}. 

So far, the Duistermaat and H\"{o}rmander's construction of distinguished parametrices for any pseudodifferential operator of real principal type has not been used much in the physics literature probably due to its great generality and the associated complex notation. 
Their main idea is, however, compelling and simple. 
The operator can be microlocally conjugated to a vector field and it is therefore sufficient to construct a parametrix for the operator of differentiation on the real line. 
There are two distinguished fundamental solutions for the operator of differentiation. 
A choice of parametrix for each connected component of the characteristic set of the operator results a distinguished parametrix. 
Thus, there are $2^N$ distinguished parametrices for such a pseudodifferential operator if $N$ is the number of connected components of its characteristic set. 
For the wave operator, this gives $4$ distinguished parametrices in dimensions $n \geq 3$ and $16$ distinguished parametrices in $n=2$.

The aim of this article is three fold. 
First, we would like to revise and simplify the construction of 
Duistermaat-H\"ormander~\cite{Duistermaat_ActaMath_1972} 
in the special case of any normally hyperbolic operator. 
The second aim is to fill a gap in the literature: microlocalisation and the corresponding construction of distinguished parametrices in Duistermaat-H\"ormander is covered in the literature only for scalar operators.
Several constructions in index theory, in trace formulae, and also in physics require the existence and uniqueness of Feynman parametrices for operators acting on vector bundles. 
It is known that most of the constructions carry over to the case of any normally hyperbolic operator, since its principal symbol is scalar. 
There are, however, also important differences that appear on the level of subprincipal symbol. 
In this article, we would like to give a precise statement of microlocalisation (Theorems~\ref{thm: microlocalisation_P} and~\ref{thm: microlocalisation_NHOp}) for these class of geometric operators. 
We then provide a detailed construction of Feynman parametrices (Theorem~\ref{thm: exist_unique_Feynman_parametrix_NHOp}) for vector bundles with complete proofs for their uniqueness and discuss the effect of curvature of the bundle connection. 
In case, the vector bundle has a hermitian inner product with respect to which the operator is formally selfadjoint, we show (Proposition~\ref{thm: positivity_Feynman_minus_adv_NHOp}) that the construction can be carried out in such a way that the above-mentioned positivity property holds. 
Third and finally, for any Dirac-type operator, we give (Theorem~\ref{thm: Hadamard_bisolution_Dirac_type_op}) a much more direct construction of Feynman parametrices satisfying a positivity property.
We also discuss some consequences, including the usual propagation of singularities theorems (Theorem~\ref{thm: propagation_singularity_Sobolev_WF}).
These are well known to hold for very general operators on vector bundles. 
For instance, the propagation of polarisation sets has been proven by Dencker using 
microlocalisation in the matrix setting~\cite{Dencker_JFA_1982}. 

As explained above the construction of Feynman propagators satisfying the positivity property is equivalent to the construction of Hadamard states.
There are by now several constructions of Hadamard states, recently, even in the 
analytic category~\cite{Gerard_CMP_2019}. 
Amongst the methods to construct them, there are direct ones using singularity expansions employing the 
Lorentzian distance function 
(the so-called Hadamard expansion)~\cite{Fulling_CMP_1978, Brown_JMP_1984, Sahlmann_RMP_2001, Marecki_master, Hollands_RMP_2008, Dappiaggi_RMP_2009, Lewandowski_JMP_2022},  
spectral methods that rely on frequency splitting 
(deformation method)~\cite{Fulling_AnnPhys_1981, Murro_AGAG_2021}, 
pseudodifferential methods~\cite{Junker_RMP_1996, Hollands_adiabatic_CMP_2001, Gerard_CMP_2014, Gerard_CMP_2015, Gerard, Gerard_bounded_geometry} 
(see also the recent monograph~\cite{Gerard_EMS_2019}), 
holography~\cite{Moretti_CMP_2008, Dappiaggi_JMP_2009, Dappiaggi_ATMP_2011, Gerard_AnalPDE_2016} 
(see also the exposition~\cite{Dappiaggi_Springer_2017}), 
and even 
global methods~\cite{GellRedman_CMP_2016, Vasy_AHP_2017, Derezinski_RMP_2018, Capoferri_JMAA_2020}.  
Many of these constructions can and have been generalised to the bundle case. 
Each comes with their advantages and disadvantages.
We are not trying to review the wealth of different methods but instead refer to the above literature for further references. 
%
%
%
%
%
%
%
%
%
%
\section{The setup and the main results} 
\label{sec: setup}
Suppose that $(M,g)$ is an $n \geq 2$-dimensional globally hyperbolic spacetime, i.e., an oriented and timeoriented Lorentzian manifold $M$ of metric $g$-signature $(-, +, \ldots, +)$ that possesses a smooth spacelike Cauchy hypersurface.  
Let $E \to M$ be a smooth complex vector bundle. 
A linear differential operator $\Box$ acting on smooth sections $\secE$ of $E$ is called \textit{normally hyperbolic} if its principal symbol $\symb{\square}$ is scalar and equals to the metric $g^{-1}$ on the cotangent bundle $T^{*} M$ over $M$, i.e., 
\begin{equation*}
	\symb{\square} \xxi := g_{x}^{-1} (\xi, \xi) \, \one_{\End{E}}   
\end{equation*} 
for any $\xxi \in  T^{*} M$. 
Then, there is a unique connection
$\nabla^{E}: C^\infty(M; E) \to C^\infty(M; E \otimes T^*M)$ and a unique potential $V \in C^\infty \big( M; \End{E} \big)$ such that $\Box$ takes the form 
(see e.g.~\cite[Prop. 3.1]{MR1418003},~\cite[Lem. 1.5.5 and 1.5.6]{Baer_EMS_2007}) 
\begin{equation} \label{eq: Weitzenboeck_connection_NHO}
	\Box = -\tr_{g} (\nabla^{T^{*} M \otimes E} \circ \nabla^{E}) + V, 
	\quad 
	\nabla^{T^{*} M \otimes E} := \nabla^{\mathrm{LC}} \otimes \one + \one \otimes \nabla^{E},  		
\end{equation}
where $\nabla^{\mathrm{LC}}$ is the Levi-Civita connection of $M$ and $\tr_{g} :  T^{*} M \otimes T^{*} M \to \R$ denotes the metric trace: $\tr_{g} \big( \xxi \otimes (x, \eta) \big) := g_{x}^{-1} (\xi, \eta)$. 
This formula is sometimes called the Weitzenb\"{o}ck formula and we will refer to the connection $\nabla^{E}$ as the \textit{Weitzenb\"{o}ck connection}.
If $E$ carries a bundle metric such that $\Box$ is formally selfadjoint then $\nabla^{E}$ is compatible with this metric.
We do not, however, assume this unless stated otherwise.

Recall that $C_{\mathrm{sc}}^{\infty} (M; E)$ denotes the space of spatially compact smooth sections of $E$. 
It is defined as the set of all $u \in \secE$ for which there exists a compact subset $K$ of $M$ such that $\supp{u} \subset J (K) := J^{+} (K) \cup J^{-} (K)$ where $J^\pm(K)$ denotes the causal future/past of $K$. 
It is a classical result that any normally hyperbolic operator on a globally hyperbolic spacetime admits unique retarded and advanced Green's operators $G^{\ret, \adv}: \comSecE \to C_{\mathrm{sc}}^{\infty} (M; E)$ with the characteristic property   
\begin{equation*}
	\mathrm{supp} \big( G^{\ret, \adv} (u) \big) \subset J^{\pm} \big( \supp{u} \big)
\end{equation*}
for any compactly supported smooth section $u \in \comSecE$ of $E$.  
Constructions in great generality are due to  
Hadamard~\cite{Hadamard_ActaMath_1908, Hadamard_Dover_2003} 
and to 
Riesz~\cite{Riesz_ActaMath_1949, Riesz_CPAM_1960}. 
Contemporary expositions include, for 
example~\cite{Friedlander_CUP_1975, Guenther_AP_1988, Baer_EMS_2007},  
and we refer here, for instance, the 
monographs~\cite[Cor. 3.1.4]{Baer_EMS_2007},~\cite[Prop. 4.1, Rem. 4.3 (b)]{Guenther_AP_1988}  
for a normally hyperbolic operator. 
We would also like to mentioned the treatments~\cite{Schapira_MPAG_2017, Jubin_LMP_2016} for the Cauchy problem on an analytic globally hyperbolic spacetime. 

Throughout this paper we will not distinguish notationally between a continuous linear map $C_{\mathrm{c}}^{\infty} \to \cD'$ and its distributional kernel. 
This means that the Green's operators $\advGreenOp, \retGreenOp$ are also to be understood as distributions in $\mathcal{D}'(M \times M; E \boxtimes E^*)$. 
On a general manifold $M$, operators will be thought of as maps between halfdensities. 
This allows pairings between functions without using a volume form. 
In this way, the space of distributions $\cD' (M; E \otimes \varOmega^{\nicefrac{1}{2}})$ is defined as the topological dual of $C_{\mathrm{c}}^{\infty} (M; E^{*} \otimes \varOmega^{\nicefrac{1}{2}})$, where $\varOmega^{\nicefrac{1}{2}} \to M$ is the bundle of halfdensities over $M$. 
In case a Lorentzian (resp. Riemannian) metric is given on $M$, we use the natural Lorentzian (resp. Riemannian) volume form to identify halfdensities with functions, i.e., in this case,  $\varOmega^{\nicefrac{1}{2}}$ has a canonical trivialisation in terms of the Lorentzian (resp. Riemannian) volume form. 
In both situations, we will be notationally suppressing $\varOmega^{\nicefrac{1}{2}}$, so all functions and sections will be considered halfdensity-valued.
We will denote the zero section removed cotangent bundle resp. the (co)lightcone bundle by $\dotCoTanM$ resp. $\coLightBun$, and use Einstein's summation convention.  

Our convention of Fourier transform is 
\begin{equation*}
	\hat{f} (\theta) := \int_{\Rn} f (x) \, \re^{ - \ri x \cdot \theta} \rd x
\end{equation*}
for any $f \in L^{1} (\Rn, \rd x)$, where $\cdot$ is the Euclidean inner product. 
Suppose that $U \subset \Rn$ is a non-empty open set.
Recall that $\dot{T}^{*} U \ni \xxiNot \notin \WF{u}$ is not in the wavefront set $\WF{u}$ of a distribution $u \in \mathcal{D}' (U; E)$ if and only if there exists a compactly supported section $f$ on $U$ non-vanishing at $x_{0} \in U$ such that the Fourier transform of $\widehat{fu}(\xi)$ is rapidly decreasing in a conic neighbourhood of $ \xi^{0}$~\cite[Prop. 2.5.5]{Hoermander_ActaMath_1971}. 
In fact, this definition makes sense on smooth manifolds as the wavefront set transforms covariantly under a change of coordinates. 
Details are available, for instance, in the 
monograph~\cite[Chap. VIII]{Hoermander_Springer_2003} 
and in the 
exposition~\cite{Strohmaier_Springer_2009} 
customised for quantum field theory on curved spacetimes. 
If $u \in \mathcal{D}' (M; E)$ then $\WF{u} \subset \dot T^* M$. 
The \textit{twisted wavefront set} of a bidistribution $A \in \cD'(M \times N ; E \boxtimes F^*)$ is given by 
\begin{equation*}
	\WFPrime{A} := \{ (x_{0}, \xi^{0}; y_{0}, - \eta^{0}) \in \dotCoTanM \times \dotCoTanN | \xxiyetaNot \in \WF{A} \}, 
\end{equation*} 
where $F \to N$ is a smooth complex vector bundle over a manifold $N$.  
The twisted wavefront  $\WFPrime{P}$ of the Schwartz kernel of a pseudodifferential operator $P$ is a subset of the diagonal in  $\dot T^* M \times  \dot T^* M$.  
Then, $\ES{P} := \{ \xxiNot \in \dotCoTanM | (x_{0}, \xi^{0}; x_{0}, \xi^{0}) \in  \WFPrime{P} \}$ is called the 
\textit{essential support}\footnote{It 
	is also known as the \textit{microsupport} or \textit{wavefront set} of a pseudodifferential operator.
} 
of $P$. 
This is the smallest conic set such that $P$ is of order $-\infty$ in $\dotCoTanM \setminus \ES{P}$ 
(see e.g.~\cite[Prop. 18.1.26]{Hoermander_Springer_2007}). 

Throughout the article, only the polyhomogeneous symbol class $S^{m}$ (see Definition~\ref{def: polyhomogeneous_symbol_mf}) will be used, so $\Psi \mathrm{DO}^{m}$ (resp. $I^{m}$) will be the set of pseudodifferential operators (resp. Lagrangian distributions) having polyhomogeneous total symbols.  
We will denote by $H_{\mathrm{loc}}^{s} (M; E)$, the space of sections on $E$ that are locally in the Sobolev space $H^{s}$ with respect to any smooth chart and smooth bundle chart. 
The space of sections in $H_{\mathrm{loc}}^{s} (M; E)$ of compact support is denoted by $H_{\mathrm{c}}^{s} (M; E)$. 
As usual, the space $H_{\mathrm{loc}}^{s} (M; E)$ is equipped with the locally convex topology of convergence locally in $H^s (M; E)$. 
The space $H_{\mathrm{c}}^{s} (M; E)$ is the union $\bigcup_{K \Subset M} H_{\mathrm{c}}^{s} (K; E)$ and is equipped with the inductive limit topology, where the union runs over all compact subset $K$ of $M$. 
For details, we refer, for example~\cite[App. B1]{Hoermander_Springer_2007}. 
%
%
%
%
%
%
%
%
%
%
\subsection{Feynman parametrices and propagators}
A parametrix which is uniquely characterised by its wavefront set is called a 
distinguished parametrix~\cite[Sec. 6.6]{Duistermaat_ActaMath_1972}. 

%
%
%
\begin{definition}[Feynman parametrices] 
\label{def: Feynman_parametrix}
	Let $(M, g)$ be a globally hyperbolic spacetime and $\dotCoTanM, \coLightBun \to M$ the punctured cotangent bundle and the lightcone bundle over $M$, respectively. 
	The Feynman $\FeynParametrix$ and the anti-Feynman $\antiFeynParametrix$ parametrices of a normally hyperbolic operator acting on smooth sections of a vector bundle $E \to M$ over $M$, are parametrices $\FeynParametrix, \antiFeynParametrix : \comSecE \to \secE$ whose Schwartz kernels satisfy~\cite[p. 229]{Duistermaat_ActaMath_1972} 
	(see also~\cite[pp. 541-542]{Radzikowski_CMP_1996}) 
	\begin{equation} \label{eq: def_Feynman_parametrix}
		\WFPrime{\FeynParametrix} \subset \varDelta \,\dotCoTanM  \cup C^{+}, 
		\qquad 
		\WFPrime{\antiFeynParametrix} \subset \varDelta \,\dotCoTanM \cup C^{-},  
	\end{equation}
	where $\varDelta \, \dotCoTanM  := \{ (x, \xi; x, \xi) \in \dotCoTanM \times \dotCoTanM \}$ is the diagonal in $\dotCoTanM \times \dotCoTanM$,  
	\begin{equation} \label{eq: def_Feynman_bicharacteristic_relation_NHOp}
		C^{\pm} := \big\{ (x, \xi; y, \eta) \in \coLightBun \times \coLightBun | \exists s \in \R_{\gtrless 0} : \xxi = \varPhi_{s} (y, \eta) \big\}  
	\end{equation}
	are the forward and backward geodesic relations on $\coLightBun \times \coLightBun$, and $\varPhi_{s}$ is the "time $s$" geodesic flow on $T^{*} M$ restricted to $\coLightBun$ where $s$ is the flow parameter. 
\end{definition}
%
%
%

In this exposition, we give a vector bundle version of the classic result by 
Duistermaat-H\"{o}rmander~\cite[Thm. 6.5.3]{Duistermaat_ActaMath_1972} 
(see also~\cite[Thm. 26.1.14]{Hoermander_Springer_2009}) 
for normally hyperbolic operators. 

%
%
%
\begin{theorem}[Existence and uniqueness of Feynman parametrices] 
\label{thm: exist_unique_Feynman_parametrix_NHOp}
	Let $E \to M$ be a smooth complex vector bundle over a globally hyperbolic spacetime $(M, g)$ and $\square$ a normally hyperbolic operator on $E$. 
	Then, there exist unique (modulo smoothing kernels) Feynman $\FeynParametrix$ and anti-Feynman $\antiFeynParametrix$ parametrices of $\square$. 
	Moreover, for every $k \in \R$, $\FeynParametrix$ and $\antiFeynParametrix$ extend to   continuous maps from  $H_{\mathrm{c}}^{k} (M; E)$ to  $H_{\mathrm{loc}}^{k+1} (M; E)$ with   
	\begin{equation} \label{eq: diff_Feyn_anti_Feyn_parametrix}
		\FeynParametrix - \antiFeynParametrix \in I^{-3/2} \big( M \times M, C'; \Hom{E, E} \big), 
	\end{equation} 
	where $I^{-3/2} \big( M \times M, C'; \Hom{E, E} \big)$ is the space of Lagrangian distributions (see Definition~\ref{def: Lagrangian_distribution_bundle_local}) associated to the geodesic relation 
	\begin{equation} \label{eq: def_bicharacteristic_relation_NHOp}
		C' := \{ (x, \xi; y, -\eta) \in \coLightBun \times \coLightBun | \exists s \in \R : \xxi = \varPhi_{s} (y, \eta) \}, 
	\end{equation}
	where $\varPhi_{s}$ is the "time $s$" geodesic flow on the cotangent bundle restricted to the lightcone bundle $\coLightBun \to M$. 
	Furthermore, $\FeynParametrix - \antiFeynParametrix$ is non-characteristic 
	(see Definition~\ref{def: elliptic_FIO}) 
	at every point of $C$.  
\end{theorem}
%
%
%

A special case of this result for the massive Klein-Gordon operator was given by 
Radzikowski~\cite[Prop. 4.2-4.4]{Radzikowski_CMP_1996} 
as a direct consequence of~\cite[Thm. 6.5.3]{Duistermaat_ActaMath_1972}. 
Employing the distinguished global phase function approach of Fourier integrals operators~\cite{Laptev_CPAM_1994}, 
Capoferri \textit{et al}.~\cite[Thm. 5.2]{Capoferri_JMAA_2020} 
have constructed these parametrices for scalar wave operators with time-independent smooth potential in spatially compact globally hyperbolic ultrastatic spacetimes.  
Recently, 
Lewandowski~\cite[Prop. 3.5]{Lewandowski_JMP_2022} 
has published a direct construction for a real vector bundle utilising the Hadamard series expansion along with the presentation of~\cite{Baer_EMS_2007}. 
In contrast to these, our proof is purely microlocal as in the original 
treatment~\cite[Thm. 6.5.3]{Duistermaat_ActaMath_1972}.
This only requires a bundle version of microlocalisation as developed in due course (Theorems~\ref{thm: microlocalisation_P} and~\ref{thm: microlocalisation_NHOp}), which is along the lines of 
Dencker's~\cite{Dencker_JFA_1982} 
proof of propagation of the polarization sets albeit our presentation is more geometric. 

In fact, the following positivity property can be shown.

%
%
%
\begin{restatable}{proposition}{positivityFeynmanMinusAdvNHOp} 
\label{thm: positivity_Feynman_minus_adv_NHOp}
	Let $E \to M$ be a smooth complex vector bundle over a globally hyperbolic spacetime $(M, g)$ and $\square$ a normally hyperbolic operator on $E$. 
	Suppose that $E$ is endowed with a (non-degenerate) sesquilinear form $(\cdot|\cdot)  \in C^\infty(\overline{E^*} \otimes E^*)$ such that $\square$ is formally selfadjoint with respect to $(\cdot|\cdot)$. 
	Then, there exists a Feynman parametrix $\FeynParametrix$ of $\square$ such that $W := - \ri (\FeynParametrix - \advGreenOp)$ is formally selfadjoint. 
	Additionally, if $(\cdot|\cdot)$ is positive-definite (hermitian) then $W$ can be chosen non-negative.
\end{restatable}
%
%
%

This is essentially a bundle version of that in 
Duistermaat-H\"{o}rmander~\cite[Thm. 6.6.2]{Duistermaat_ActaMath_1972} 
which was proven by deploying a refined microlocalisation of scalar pseudodifferential operators~\cite[Lem. 6.6.4]{Duistermaat_ActaMath_1972}. 
We provide such a microlocalisation for $\square$ in~\eqref{eq: microlocalisation_NHOp_refined}. 

Finally, one can turn the Feynman parametrix $\FeynParametrix$ into a Feynman propagator $\FeynGreenOp$ utilising the well-posedness of the Cauchy problem for $\square$ on a globally hyperbolic spacetime. 

%
%
%
\begin{restatable}[Existence of Hadamard bisolutions]{theorem}{existenceFeynmanGreenOpNHOp} 
\label{thm: existence_Feynman_propagator_NHOp}
	Let $E \to M$ be a smooth complex vector bundle over a globally hyperbolic spacetime $(M, g)$ and $\square$ a normally hyperbolic operator on $E$ that is formally selfadjoint with respect to the non-degenerate sesquilinear form $(\cdot|\cdot) \in C^\infty(\overline{E^*} \otimes E^*)$. 
	Then, there exists a Feynman propagator $\FeynGreenOp$ for $\square$ such that $\omega := - \ri (\FeynGreenOp - \advGreenOp)$ is formally selfadjoint with respect to $(\cdot|\cdot)$.   
	In addition, if $(\cdot|\cdot)$ is hermitian then $\FeynGreenOp$ can be chosen such that $\omega (\bar{u}^{*} \otimes u) \geq 0$ for any $u \in \comSecE$. 
	Thus, $\omega$ defines a Hadamard state. 
	Here $\secE \ni u \mapsto \bar{u}^{*} \in C^{\infty} (M; \bar{E}^{*})$ is the fibrewise linear mapping induced by $(\cdot|\cdot)$.
\end{restatable}
%
%
%

Duistermaat-H\"{o}rmander~\cite[p. 229]{Duistermaat_ActaMath_1972} 
have actually considered a much wider class of operators, namely scalar pseudodifferential operators of real principal type on a smooth manifold. 
The pivotal step of determining the appropriate smoothing operators required to obtain Feynman propagators from respective parametrices was, however, left open.
This indeterminacy can be fixed in various ways on special spacetimes even in the absence of the timelike Killing vector field. 
Such constructions have appeared in the literature on microlocal analysis. 
For example, 
Gell-Redman \textit{et al}.~\cite[Thm. 3.6]{GellRedman_CMP_2016} 
have treated the scalar wave operator in spaces with non-trapping Lorentzian scattering matrices.  
Vasy has constructed Feynman propagators by making assumptions on global dynamics for 
(i) scalar formally selfadjoint operators with real principal symbol in closed manifolds~\cite[Thm. 1]{Vasy_AHP_2017}, 
and in spaces of Lorentzian scattering matrices for 
(ii) wave operators in Melrose's b-pseudodifferential algebraic framework~\cite[Thm. 7]{Vasy_AHP_2017} 
and 
(iii) Klein-Gordon operators in Melrose's scattering pseudodifferential algebraic formalism~\cite[Thm. 10 and 12]{Vasy_AHP_2017}.
His idea is to identify the appropriate spaces where these operators are invertible and then define the Feynman propagators as the inverse of those operators satisfying the required properties --- a generalisation of Feynman's original "$\pm \ri \epsilon$" prescription. 
As a consequence, he has also achieved respective positivity properties for Feynman parametrices~\cite[Cor. 5, 9, 11,13]{Vasy_AHP_2017},  in the same spirit as the ones  
obtained by 
Duistermaat-H\"{o}rmander~\cite[Thm. 6.6.2]{Duistermaat_ActaMath_1972}. 
A special case of Theorem~\ref{thm: existence_Feynman_propagator_NHOp} for the Klein-Gordon operator minimally coupled to a static electromagnetic potential on a static spacetime has been proven by 
Derezi\'{n}ski-Siemssen~\cite[Thm. 7.7]{Derezinski_RMP_2018}. 
In the spirit of the limiting absorption principle, they have shown that the Feynman propagator can be considered as the boundary value of the resolvent of the Klein-Gordon operator.
Assuming Proposition~\ref{thm: positivity_Feynman_minus_adv_NHOp} and ideas from~\cite[Sec. 3.3]{Gerard_CMP_2015}, 
Lewandowski~\cite[Thm. 4.3]{Lewandowski_JMP_2022} 
has recently given a construction of Hadamard states for Riemannian vector bundles. 

Since the square of any Dirac-type operator is normally hyperbolic, Theorem~\ref{thm: existence_Feynman_propagator_NHOp} can be used to show that Hadamard states exist for such an operator whenever the underlying vector bundle is equipped with a hermitian form. 
However, the existence of a hermitian inner product on which Dirac-type operators are formally selfadjoint is rather exceptional. 
The natural inner product on spinors on a spacetime is not positive-definite rather \textit{indefinite} and therefore the positivity property for Feynman propagators cannot be concluded directly for a Dirac-type operator on a globally hyperbolic spin-spacetime employing Theorem~\ref{thm: existence_Feynman_propagator_NHOp}. 
Let $\big( E \to M, (\cdot|\cdot) \big)$ be a vector bundle over a globally hyperbolic (not necessarily spin) spacetime $(M, g)$, endowed with a sesquilinear form $(\cdot|\cdot)$ and let  $D$ be a Dirac-type operator on $E$ that is formally selfadjoint with respect to this sesquilinear form. 
Suppose that $\Sigma$ is a smooth spacelike Cauchy hypersurface of $M$ and that $N$ is a future-directed unit normal vector field on $M$ along $\Sigma$. 
We assume that 
\begin{equation} \label{eq: positive_definite_sesequilinear_form_Dirac_type_op}
	\langle \cdot | \cdot \rangle := \big( \symb{D} (N^{\flat}) \cdot \big| \cdot \big)
\end{equation}
is a fibrewise hermitian form on $E$, where $\symb{D}$ is the principle symbol of $D$ and ${\cdot}^{\flat} : C^{\infty} (M; TM) \to C^{\infty} (M; T^{*} M)$ is the unique linear pointwise bijection. 
We now provide another construction of Feynman propagators $\mathsf{S}^{\Feyn}$ for $D$ together with positivity, employing a direct microlocal decomposition (see~\eqref{eq: microlocal_decomposition_causal_propagator_Dirac_type_op}) of the Pauli-Jordan operator for $D$, in 

%
%
%
\begin{restatable}{theorem}{HadamardBisolutionDiracTypeOp}
	\label{thm: Hadamard_bisolution_Dirac_type_op}	
	Let $\big( E \to M, (\cdot|\cdot), \symb{D} \big)$ be a smooth bundle of Clifford modules over a globally hyperbolic spacetime $(M, g)$ and $D$ a Dirac-type operator on $E$ that is formally selfadjoint with respect to the sesquilinear form $(\cdot|\cdot)$. 
	Then, there exists a Feynman propagator $\mathsf{S}^{\Feyn}$ for $D$ such that $\omega := - \ri (\mathsf{S}^{\Feyn} - S^{\adv})$ is formally selfadjoint. 
	In addition, if there exists a hermitian form $\langle \cdot | \cdot \rangle$ on $E$ satisfying~\eqref{eq: positive_definite_sesequilinear_form_Dirac_type_op}, then $\mathsf{S}^{\Feyn}$ can be chosen such that $\omega$ is non-negative with respect to $(\cdot|\cdot)$ and hence defines a Hadamard bisolution of $D$.
\end{restatable}
%
%
%
%
%
%
%
%
%
\section{Microlocalisation}
\subsection{Fourier integral operators}  
\label{sec: FIO}
Fourier integral operators originated from the study of the singularities of solutions of hyperbolic differential equations by 
Lax~\cite{Lax_DukeMathJ_1957} 
and in the context of geometrical optics by 
Maslov~\cite{Maslov_Moscow_1965}. 
Their local formulation has been later systematically developed and globalised in the seminal articles by    
H\"{o}rmander~\cite{Hoermander_ActaMath_1971} 
and by 
Duistermaat and H\"{o}rmander~\cite{Duistermaat_ActaMath_1972} 
for scalar operators. 
The vector bundle version is available in  
H\"{o}rmander's monograph~\cite{Hoermander_Springer_2009}.  
In this report, we will adopt a symbolic calculus viewpoint, as described below.  

Let $E \to M, F \to N$ be two smooth complex vector bundles over smooth manifolds $M, N$ and let $C$ be a homogeneous canonical relation from the punctured cotangent bundle $\dotCoTanN$ of $N$ to that $\dotCoTanM$ of $M$. 
In a nutshell, a Fourier integral operator associated with $C$ is a continuous linear map from $C_{\mathrm{c}}^{\infty} (N; F)$ to $\dualComSecE$ whose Schwartz kernel $A$ is a Lagrangian distribution~\cite[Def. 25.2.1]{Hoermander_Springer_2009}. 
We will now explain briefly what this means. 
Suppose that $\varLambda \subset \dotCoTanM$ is a smooth, closed and conic Lagrangian submanifold.  
The space $I^m (M, \Lambda)$ of Lagrangian distributions of order $m \in \R$ can be defined with respect to local coordinate charts. 
To do this, we first explain the concept of a clean phase function~\cite[p. 71]{Duistermaat_JInventMath_1975}. 
The reader is referred to the 
monograph~\cite[pp. 414-428]{Treves_Plenum_1980} 
for more details.

%
%
%
\begin{definition} \label{def: clean_phase_function}
	Let $M$ be a smooth manifold and $d$ a natural number not necessarily equal to the dimension $n$ of $M$. 
	A real-valued smooth function $\varphi$ on an open conic set $\cU \subset M \times \dot{\R}^{d}$, homogeneous of degree one in $\theta \in \dot{\R}^{d}$, is called a clean phase function of excess $0 \leq e \leq n$ if $\rd \varphi \neq 0$ and its fibre-critical set 
	\begin{equation*}
		\varSigma := \big\{ \xthetaNot \in \cU ~|~ \grad_{\theta} \varphi \,  \xthetaNot = 0 \big\}
	\end{equation*} 
	is an $n+e$-dimensional smooth submanifold of $M \times \dot{\R}^{d}$, whose tangent space is given by the kernel of $\rd (\grad_{\theta} \varphi)$. 
	The number of linearly independent differentials $\rd (\partial \varphi / \partial \theta_{i}), i = 1, \dots, d$ is equal to $d - e$ on $\varSigma$.  
	The phase function $\varphi$ is called non-degenerate if $e = 0$    
	(see e.g.~\cite[Def. 21.2.15]{Hoermander_Springer_2007}).
\end{definition}
%
%
%

A clean phase function on an open set $\cU \subset U \times \dot{\R}^{d}$ where $U \subset \Rn$, defines an immersed conic Lagrangian submanifold $\varLambda_\varphi \subset \dot{T}^*U$ via the homogeneous Lagrangian fibration~\cite[Lem. 7.1]{Duistermaat_JInventMath_1975}
(see also, e.g.~\cite[pp. 416-417]{Treves_Plenum_1980},~\cite{Meinrenken_ReptMathPhys_1992}) 
\begin{equation} \label{eq: map_critical_set_Lagrangian_submf}
	\varSigma \ni \xtheta \mapsto (x, \rd_{x} \varphi) \in \varLambda_{\varphi}.
\end{equation}
A Lagrangian distribution $u \in I^m(U, \varLambda_\varphi)$ is, by definition, a distribution $u \in \cD' (U)$ locally given by an oscillatory integral of the form~\cite[(1.2.1) and Def. 3.2.2]{Hoermander_ActaMath_1971} 
(see also, e.g.~\cite[Prop. 25.1.5']{Hoermander_Springer_2009},~\cite[pp. 433-439]{Treves_Plenum_1980}) 
\begin{equation} \label{eq: def_IML_local}
		u(x) := (2\pi)^{-(n + 2d - 2e) / 4} \int_{\Rd} \re^{\ri \varphi \xtheta} \, a \xtheta \, \rd \theta, 
\end{equation}
where $\rd \theta$ is the Lebesgue measure on $\R^{d}$ and $a \in S^{m + (n - 2d - 2e) /4} (U \times \dot{\R}^{d})$ is a symbol (see Definition~\ref{def: symbol_mf}) with support in the interior of a sufficiently small  conic neighbourhood of $\varSigma$ contained in the domain of definition of $\varphi$. 
It then follows that $\WF{u} \subset \varLambda_\varphi$~\cite[Thm. 3.2.6]{Hoermander_ActaMath_1971} 
(see also, e.g.~\cite[Prop. 3.1 (p. 438)]{Treves_Plenum_1980}). 

As usual, the oscillatory integral is to be understood as a formal expression that does not make sense for a fixed $x$ rather it defines a distribution in the sense that for any test function 
$f \in C^\infty_{\mathrm{c}} (U)$ we have
\begin{equation*}
	u(f) = (2\pi)^{-(n + 2d - 2e) / 4} \int_{\Rd} \int_{U} \re^{\ri \varphi \xtheta} \, a \xtheta \, f(x) \, \rd x \, \rd \theta.  
\end{equation*}

%
%
%
\begin{definition} \label{def: Lagrangian_distribution_bundle_local}
	Let $E \to M$ be a smooth complex vector bundle over a smooth manifold $M$. 
	Suppose that $\varLambda$ is a smooth, closed and conic Lagrangian submanifold of the punctured cotangent bundle $\dotCoTanM$ of $M$.  
	A distribution $u \in \mathcal{D}' (M; E)$ is called an element in the space $I^{m} (M, \varLambda; E)$ of Lagrangian distributions of order $m \in \R$, if it can be written as 
	\begin{equation*}
		u = \sum_{\alpha} u_{\alpha} 
	\end{equation*}
	with locally finite $\supp{u_{\alpha}} \subset U_{\alpha} \subset M$ such that the components $(u^{1}, \ldots, u^{\rk E})$ of $u_{\alpha}$ with respect to a local chart and a bundle chart are in $I^{m} (U_{\alpha}, \varLambda_{\varphi_\alpha})$ with $\varLambda_{\varphi_\alpha} \subset \varLambda$.	
\end{definition}
%
%
%

The above of course implies that~\cite[Lem. 25.1.2]{Hoermander_Springer_2009} 
\begin{equation} \label{eq: WF_Lagrangian_dist}
	\WF{u} \subset \varLambda 
\end{equation} 
for any $u \in I^{m} (M, \varLambda; E)$. 
The original 
definition~\cite[Sec. 1.4]{Hoermander_ActaMath_1971} 
was based on representations with non-degenerate phase functions. 
Subsequently, it has been 
globalised~\cite[Def. 3.2.2]{Hoermander_ActaMath_1971} 
and extended for sections of vector bundles~\cite[Def. 25.1.1]{Hoermander_Springer_2009}.  
We refer to the monographs~\cite[pp. 4-10]{Hoermander_Springer_2009},~\cite[pp. 433-439]{Treves_Plenum_1980}  
for a systematic study.  

%
%
%
\begin{remark} \label{rem: parametrisation_FIO_canonical_relation}
	The parametrisation of a generic Lagrangian submanifold, $\varLambda_{\varphi} = {\{} (x, \rd_{x} \varphi) {\}}$ by a phase function $\varphi$ cannot be performed globally due to topological restriction given by some cohomology class, known as the Maslov class. 
	Furthermore, a phase function does not uniquely characterise a Lagrangian submanifold, rather one must consider an equivalence class of phase functions satisfying certain necessary and sufficient conditions, as originally analysed for 
	non-degenerate phase functions~\cite[Thm. 3.1.6]{Hoermander_ActaMath_1971},  
	albeit it holds true more generally for clean phase functions. 

	Technically speaking, let $\tilde{\cU} \subset M \times \dot{\R}^{\tilde{d}}$ be an open conic set such that there exists a diffeomorphism $\upsilon: \cU \in \xtheta \mapsto (x; \tilde{\theta}) \in \tilde{\cU}$ which is homogeneous with respect to $\theta$ of degree one and fibre-preserving, where $\tilde{\theta}$ is a smooth function of $\xtheta$.  
	Then, $\varphi$ is said to be (locally) \textit{equivalent} to a phase function $\tilde{\varphi}$ on $\tilde{\cU}$ if $\upsilon^{*} \tilde{\varphi} = \varphi$~\cite[p. 134]{Hoermander_ActaMath_1971}. 
	If $\varphi$ resp. $\tilde{\varphi}$ are two clean phase functions in conic neighbourhoods of their fibre-critical points $\xthetaNot \in U \times \dot{\R}^{d}$ resp. $(x_{0}, \tilde{\theta}^{0}) \in U \times \dot{\R}^{\tilde{d}}$ with the same excess $e$, then they are equivalent in some open conic neighbourhoods of these points if and only if~\cite[Prop. 1.5 (p. 421)]{Treves_Plenum_1980} 
	\begin{subequations} \label{eq: necessarily_sufficient_equivalent_clean_phase_function}
		\begin{eqnarray} 
			\varSigma_{\varphi} \ni \xtheta \mapsto (x, \rd_{x} \varphi) 
			& = & 
			(x, \rd_{x} \tilde{\varphi}) \mapsfrom (x, \tilde{\theta})  \in \varSigma_{\tilde{\varphi}}, 
			\\ 
			d 
			& = & 
			\tilde{d}, 
			\\ 
			\sgn \big( \Hess_{\theta} \varphi \, \xtheta \big) 
			& = & 
			\sgn \big( \Hess_{\tilde{\theta}} \varphi \, (x; \tilde{\theta}) \big),   
		\end{eqnarray}
	\end{subequations}
	where $\sgn$ denotes the signature of the Hessian matrix $\mathrm{Hess}_{\theta}$ (resp. $\mathrm{Hess}_{\tilde{\theta}}$) with respect to the fibre variable $\theta$ (resp. $\tilde{\theta}$). 

	When $M$ is compact, the aforementioned obstructions require cohomological and $k$-theoretic language to formulate which are somehow tangential to the subject matter. 
	So we refrain those precise formulae and refer~\cite{Latour_ASENS_1991} together with the earlier references cited therein. 
	The non-compact case is still an open issue.  
	Notwithstanding, a global parametrisation can be achieved, for instance, by an equivalence class of complex-valued non-degenerate phase functions in the particular case whenever conic Lagrangian submanifolds are given by the graphs of homogeneous symplectomorphisms~\cite[Lem. 1.2 and 1.7]{Laptev_CPAM_1994}.
\end{remark}
%
%
%

In order to define the class of Fourier integral operators, one substitutes the manifold $M$ by a product manifold $M \times N$ and replaces the Lagrangian submanifold $\varLambda$ by a homogeneous canonical relation $C \subset \dotCoTanM \times \dotCoTanN$ which is closed in $\dotCoTanMN$. 
In addition, the role of the vector bundle $E$ will be played by the vector bundle $E \boxtimes F^{*} \to M \times N$. 
We will usually abbreviate  $E \boxtimes F^{*} \to M \times N$ by $\Hom{F, E}$ when it is clear from the context what the base manifold is. 
A Fourier integral operator $C_{\mathrm{c}}^{\infty} (N; F) \to \cD' (M; E)$ associated with $C$, of order $m \in \R$, is defined as the continuous linear mapping whose Schwartz kernel $A$ is an element in $I^{m} \big( M \times N, C'; \Hom{F, E} \big)$. 
As with the wavefront set the ``twisted''  relation $C'$ is defined by $(x,\xi;y,\eta) \in C' \Leftrightarrow (x,\xi;y,-\eta) \in C$.
The bidistribution $A$ can be described in local coordinates as follows. 
Given a local chart and a local trivialisation, we can locally identify $A$ with a $\rk E \times \rk F$-matrix of entries $A_k^r \in I^{m} (U \times V, C'_{\varphi})$, where $U \subset \R^{\nM}, V \subset  \R^{\nN}$ are appropriate open subsets and~\cite[p. 134]{Hoermander_ActaMath_1971},~\cite[Lem. 7.1]{Duistermaat_JInventMath_1975} 
(see also, e.g.~\cite{Meinrenken_ReptMathPhys_1992},~\cite[(21.2.9)]{Hoermander_Springer_2007})   
\begin{equation} \label{eq: local_twisted_canonical_relation_phase_function}
	C'_{\varphi} = \{ \left( x, \rd_{x} \varphi; y, \rd_{y} \varphi \right) \in \dotCoTanM \times \dotCoTanN | (x, y; \theta) \in \varSigma \}, 
	\quad 
	\varSigma := (\grad_{\theta} \varphi)^{-1} (0) 
\end{equation}
is a local representative of $C$, generated by a clean phase function $\varphi$ on $U \times V \times \dot{\R}^{d}$ with excess $e$. 
By definition, 
\begin{equation}
	\WFPrime{A} \subset C, \quad  \WFPrime{A_k^r} \subset C_{\varphi}. 
\end{equation}
In a small conic neighbourhood of $\xxiyetaNot \in C'_{\varphi}$, the singularity of $A_k^r$ is described by a model of the form~\cite[Prop. 25.1.5']{Hoermander_Springer_2009}  
\begin{equation} \label{eq: 25_1_3_clean_Hoemander}
	\tilde A_{k}^{r} (x, y) \equiv (2\pi)^{-(\nM + \nN + 2d - 2e) / 4} \int_{\Rd} \re^{\ri \varphi \xytheta} a_{k}^{r} \xytheta \, \rd \theta 
\end{equation}
in the sense that in local coordinates $\xxiyetaNot \notin \WF{A_k^r - \tilde A_k^r}$. 

We list some rudimentary properties of Lagrangian distributions below. 
%
%
%
%
%
%
%
%
%
%
\subsubsection{Principal symbol} 
\label{sec: principal_symbol_FIO}
Loosely speaking, this is the highest order contribution in the asymptotic sense for a Lagrangian distribution $A \in I^{m} (M, \varLambda)$. 
The concept was introduced by 
H\"{o}rmander~\cite[Thm. 3.2.5]{Hoermander_ActaMath_1971} 
for vanishing excess. 
Subsequently, its generalisation for non-zero excess was given by
Duistermaat and Guillemin~\cite[Lem. 7.2]{Duistermaat_JInventMath_1975}. 
Later, a geometric description of principal symbol of an arbitrary distribution on a manifold was provided by 
Weinstein~\cite{Weinstein_BullAMS_1976, Weinstein_TransAMS_1978} 
who has also extended H\"{o}rmander's analysis for vector bundles, $I^{m} (M, \varLambda; E)$. 
A compendia of these results is availbale 
in the 
monographs~\cite[pp. 13-16]{Hoermander_Springer_2009},~\cite[pp. 439-450]{Treves_Plenum_1980} 
(see also, e.g.~\cite{Meinrenken_ReptMathPhys_1992}). 
We start with the notation 
\begin{equation*}
	I^{m - [1]} (\ldots) 
	:= 
	I^{m} (\ldots) / I^{m - 1} (\ldots).
\end{equation*}  
The principal symbol map is be defined by the 
isomorphism~\cite[Thm. 25.1.9]{Hoermander_Springer_2009} 
\begin{subequations}
	\begin{equation} \label{eq: def_symbol_map}
		\sigma : I^{m - [1]} \big( M \times N, C'; \Hom{F, E} \big) \to S^{m + \frac{\nM + \nN}{4} - [1]} \big( C; \Maslov \otimes \widetilde{\mathrm{Hom}} (F, E) \big), [A] \mapsto \big[ \symb{[A]}\big], 
	\end{equation}
	where $\widetilde{\mathrm{Hom}} (F, E) \to C$ is the pullback of the bundle $\Hom{F, E} \to M \times N$ to $T^{*} (M \times N)$ followed by restriction to $C$, $\Maslov \to C$ is the Keller-Maslov bundle (see Appendix~\ref{sec: FIO_symplecto_bundle}) over $C$, and 
	$S^{m + (\nM + \nN) / 4} \big( C; \Maslov \otimes \widetilde{\mathrm{Hom}} (F, E) \big)$ 
	is the $\Maslov \otimes \widetilde{\mathrm{Hom}} (F, E)$-valued symbol space (see Appendix~\ref{sec: symbol}) on $C$. 
	The principal symbol is then given explicitly in local coordinates below.  	

	Since $A \equiv (A_{k}^{r})$, the principal symbol $\symb{A}$ is locally identified with the matrix $(\symb{A})^{r}_{k}$. 
	Therefore, the result in~\cite[Prop. 25.1.5', p. 15]{Hoermander_Springer_2009}  
	for scalar Lagrangian distributions translates to vector bundles as 
	\begin{eqnarray}
		(\symb{A})_{k}^{r} \xxiyeta 
		& := &  
		\sqrt{|\rd x| |\rd \xi|} \int_{\mathfrak{C}_{\xi, \eta}} \mathsf{a}_{k}^{r} (x, y; \theta', \theta'')  \dfrac{\re^{\ri \pi \sgn (\Hess_{x,y; \theta'} \varphi) / 4}}{\sqrt{|\det (\Hess_{x,y; \theta'} \varphi)|}} \rd \theta'' 
		\nonumber \\ 
		&& 
		\mod S^{m + \frac{\nM + \nN}{4} - 1} (C_{\varphi}; \Maslov_{\varphi}), 
	\end{eqnarray}
\end{subequations}
where $\mathsf{a}_{k}^{r}$ is the top-order homogeneous term of $a$ in~\eqref{eq: 25_1_3_clean_Hoemander}, $\Hess_{x,y; \theta'} \varphi$ is the Hessian of the phase function $\varphi$ parametrising $C_{\varphi} := \eqref{eq: local_twisted_canonical_relation_phase_function}$, and 
$
	\mathfrak{C}_{\xi, \eta} 
	:= 
	\{\xytheta \in \varSigma | \rd_{x} \varphi := \xi, \rd_{y} \varphi := \eta\}
$
is the $e$-dimensional fibre over the corresponding homogeneous Lagrangian fibration $\varSigma \ni \xytheta \mapsto (x, \rd_{x} \varphi; y, \rd_{y} \varphi) \in C'_{\varphi}$. 
Here, $\rd \theta''$ is the Lebesgue measure on $\dot{\R}^{e}$ and the variable $\theta''$ is defined by the 
splitting\footnote{Such 
	a splitting is always possible due to the Thom splitting (also known as the parametrised Morse) lemma 
	(see e.g.~\cite[App. C. 6]{Hoermander_Springer_2007}). 
} 
$\dot{\R}^{d} \in \theta = (\theta', \theta'') \in \dot{\R}^{d - e} \times \dot{\R}^{e}$ such that the projection $\mathfrak{C}_{\xi, \eta} \ni (x, y; \theta', \theta'') \mapsto \theta'' \in \dot{\R}^{e}$ has a bijective differential so that, for a fixed $\theta'' = \mathrm{cst}$, $\varphi (x, y; \theta', \mathrm{cst})$ is non-degenerate. 
%
%
%
%
%
%
%
%
%
%
%
%
%
\subsubsection{Adjoint} 
\label{sec: adjoint_FIO}
To describe the adjoint of a Fourier integral operator we recall some standard notions from linear algebra for our setup. 
Let $E^{*} \to M$ be the dual vector bundle of $E \to M$ and let $x$ be any point in $M$. 
Then, $\bar{E}_{x}$ is the complex conjugate of the vector space $E_{x}$, defined as the set of all conjugate-linear maps from $E_{x}^{*}$ to $\C$ and the identity $E_{x} \to \bar{E}_{x}$ is conjugate-linear. 

A Fourier integral operator $A : C_{\mathrm{c}}^{\infty} (N; F) \to  C^{\infty} (M; E)$ has a unique formal adjoint in the sense that, there exists a unique Fourier integral operator
$
	\bar{A}^{*} : C_{\mathrm{c}}^{\infty} (M; \bar{E}^{*}) \to C^{\infty}  (N; \bar{F}^{*})
$
with the property 
\begin{equation}
	\int_{N} (\bar{A}^{*} \phi) (y) \, v (y) := \int_{M} \phi (x) \, (Av) (x)  
\end{equation}
for any $\phi \in C_{\mathrm{c}}^{\infty} (M; \bar{E}^{*})$ any $v \in C_{\mathrm{c}}^{\infty} (N; F)$. 
If $C^{-1}$ denotes the inverse relation of $C$ obtained by interchanging $T^{*} M$ and $T^{*} N$ then its Schwartz kernel and principal symbol are given by~\cite[Thm. 25.2.2]{Hoermander_Springer_2009}   
\begin{subequations}
	\begin{eqnarray}
		&& 
		\bar{A}^{*} \in I^{m} \big( N \times M, C^{-1 \prime};  \Hom{\bar{E}^{*}, \bar{F}^{*}} \big), 
		\\ 
		&& 
		\symb{[\bar{A}^{*}]} = \mathsf{s}^{*} (\overline{\symb{[A]}}^{*}) \in S^{m + \frac{\nM + \nN}{4} - [1]} \big( C^{-1}; \Maslov_{\ms C^{-1}} \otimes \widetilde{\mathrm{Hom}} (\bar{E}^{*}, \bar{F}^{*}) \big), 
	\end{eqnarray}
\end{subequations}
where $\mathsf{s} : N \times M \to M \times N$ is the interchanging map and $\Maslov_{\ms C^{-1}} \to C^{-1}$ is the Keller-Maslov bundle over $C^{-1}$. 
In any orthonormal frames of $E$ and $F$, this yields 
\begin{equation}
	A^{k}_{r} (y, x) \equiv (2\pi)^{-(\nM + \nN + 2d - 2e) / 4} \int_{\Rd} \re^{-\ri \varphi (y, x; \theta)} \bar{a}_{r}^{k} (y, x; \theta) \, \rd \theta 
\end{equation}
whenever $A$ is given by~\eqref{eq: 25_1_3_clean_Hoemander}, where as before $\equiv$ means modulo smoothing kernels. 
%
%
%
%
%
%
%
%
%
%
\subsubsection{Algebra of Fourier integral operators} 
\label{sec: alg_FIO}
A necessary assumption for the product (composition) of two Fourier integral operators to be well-defined is that the first operator must be properly supported. 
Then the defined composition may, however, still fail to be a Fourier integral operator.
For instance, the composition of two canonical relations does not necessarily have the required properties to define another Fourier integral operator.  
In order to have a well-defined product that is again a Fourier integral operator, we are obliged to assume that their Schwartz kernels (Lagrangian distributions) are properly supported and the composition of canonical relations is clean, proper and connected~\cite[Sec. 4.1]{Weinstein_Nice_1975},~\cite[Thm. 5.4]{Duistermaat_JInventMath_1975} 
(see also, e.g.~\cite[Thm. 21.2.14]{Hoermander_Springer_2007},~\cite[Def. 5.2 $($p. 458$)$]{Treves_Plenum_1980}).   
Given a complex vector bundle $\tilde{F} \to \tilde{N}$ over a manifold $\tilde{N}$, if 
$A \in I^{m} \big( M \times \tilde{N}, C'; \Hom{\tilde{F}, E} \big)$ 
and 
$B \in I^{m'} \big( \tilde{N} \times N, \varLambda'; \Hom{F, \tilde{F}} \big)$ 
with the required restrictions, 
then~\cite[Thm. 25.2.3]{Hoermander_Springer_2009} 
(see also, e.g.~\cite[Thm. 5.3 (p. 461)]{Treves_Plenum_1980})  
\begin{equation} \label{eq: product_Lagrangian_dist_bundle}
	AB := A \circ B \in I^{m + m' + e/2} \big( M \times N, (C \circ \varLambda)'; \Hom{F, E} \big), 
\end{equation}
where $e$ is the excess of the clean composition $C \circ \varLambda$ and its principal symbol is given by 
\begin{equation} \label{eq: product_symbol_Lagrangian_dist_bundle}
	\symb{AB} = \symb{A} \diamond \symb{B}.  
\end{equation} 
The symbol $\diamond$ is to be understood as follows. 
One obtains the exterior tensor product $\symb{A} \boxtimes \symb{B}$ followed by intersecting with the diagonal. 
Then the resulting quantity is integrated over the compact $e$-dimensional fibre $\mathfrak{F}_{\xxiyeta}$ over $\xxiyeta \in C \circ \varLambda$. 
Finally, the endomorphism trace is taken of the hindmost expressed.

The space of Fourier integral operators is not an algebra unless the canonical relation is symmetric and transitive~\cite[Ex. 1 and comment on it in the following page]{Guillemin_JFA_1993}. 
In case, the composition of the canonical relation $C$ by itself is clean with excess zero, proper and connected the space of Fourier integral operators $\big( I^\bullet (M \times M, C'; \Hom{E, E}), \circ \big)$ is an associative graded algebra over the field $\C$.
The composition~\eqref{eq: product_symbol_Lagrangian_dist_bundle} of principal symbols constitutes a product which is commutative only in the scalar case but non-commutative in general. 
We remark that the product formula~\eqref{eq: product_symbol_Lagrangian_dist_bundle} becomes simpler (see~\eqref{eq: product_symbol_FIO_symplecto}) when both $C$ and $\varLambda$ are graphs of some homogeneous symplectomorphisms. 
The product is then always defined; see Appendix~\ref{sec: FIO_symplecto_bundle} for details. 
%
%
%
%
%
%
%
%
%
%
\subsection{Connections and the subprincipal symbols of pseudodifferential operators}
The first important observation is that the subprincipal symbol of a pseudodifferential operator with scalar principal symbol transforms like a (partial-) connection $1$-form under change of bundle charts 
(see e.g.~\cite[Prop. 3.1]{Jakobson_CMP_2007}). 
This is perhaps not surprising given that the subprincipal symbol appears as a constant term in transport equations. 
Connections that are naturally defined from transport equations have appeared first in the work of Dencker on propagation of polarization sets~\cite[p. 365-366]{Dencker_JFA_1982}. 
They are often referred to as Dencker connections in the mathematical physics literature. 
For the Maxwell system, Dencker found that this connection equals to the Levi-Civita connection. 
For the spin-Dirac operator, it was verified, for example, 
in~\cite{Hollands_adiabatic_CMP_2001} 
that the Dencker connection is indeed the spin-connection.
We will now explain the precise relation between geometrically defined connections and the parallel transport induced by the subprincipal symbol in a systematic way. 

For a scalar $P \in \PsiDO{m}{M}$, the subprincipal symbol $\subSymb{P}$ is a well defined function on $\dot T^*M$, given by~\cite[$(5.2.8)$ and Prop. 5.2.1]{Duistermaat_ActaMath_1972}  
(see also, e.g.~\cite[Thm. 18.1.33 and $(18.1.33')$]{Hoermander_Springer_2007}) 
\begin{equation} \label{eq: subprincipal_symbol}
	 \subSymb{P} = p_{m-1} + \frac{\ri}{2} \frac{\partial^{2} \symb{P}}{\partial x^{i} \partial \xi_{i}}, \quad i = 1, \ldots, n,  
\end{equation}
where $\symb{P}$ is the principal symbol and $p_{m-1}$ is the next (cf. Definition~\ref{def: polyhomogeneous_symbol_mf}) to the leading order homogeneous term in the total symbol of $P$  when expressed as an operator acting on halfdensities.
If $P \in \PsiDO{m}{M; E}$ then it can be locally represented by a $\rk{E} \times \rk{E}$-matrix of pseudodifferential operators, if we fix a local bundle frame $(e_1,\ldots,e_{\rk E})$ of $E$. 
The subprincipal symbol of $P$ is then defined with respect to this local frame as the $\rk{E} \times \rk{E}$-matrix-valued function on $\dot T^*M$ given by the subprincipal symbols of the elements of the matrix representing $P$. 
We will see below that under a change of bundle frames, this matrix will transform like a connection $1$-form along the Hamiltonian vector field in the sense explained below.

Let us summarise the transformation properties of the subprincipal symbol. 
In case $P \in \PsiDO{m}{M; E}, m \in \R$ has a scalar principal symbol and $Q \in \PsiDO{s}{M; E}, s \in \R$ properly supported, we have the following multiplication formula~\cite[(1.4)]{Duistermaat_JInventMath_1975} 
\begin{equation} \label{eq: subprincipal_symbol_product}
	\subSymb{PQ} = \subSymb{P} \, \symb{Q} + \symb{P} \, \subSymb{Q} + \frac{1}{2 \ri} X_{P} (\symb{Q}),   
\end{equation}
where $X_P = \{ \symb{P},\cdot \}$ is the Hamiltonian vector field generated by the principal symbol $\symb{P}$ of $P$.
We also have~\cite[(1.3)]{Duistermaat_JInventMath_1975} 
\begin{equation} \label{eq: subprincipal_symbol_power}
	\subSymb{P^{k}} = k \, \symb{P}^{k-1} \, \subSymb{P}  
\end{equation}
for any $k \in \N$. 
Moreover, if $P$ is elliptic and $Q$ is a parametrix of $P$, then we have  
\begin{equation} \label{eq: subprincipal_symbol_inverse}
	\subSymb{Q} = - \, \symb{P}^{-2} \, \subSymb{P}  
\end{equation}
and in this sense the above formula also holds for any negative integer $k$.

Now, let us show that the subprincipal symbol indeed has the claimed transformation property under a change of bundle charts. In fact we show a microlocal version of this statement 
(see also~\cite{Hintz_JST_2017}). 

%
%
%
\begin{proposition} \label{prop: P_connection_transformation}
	Let $M$ be a manifold and let $m, s \in \R, N \in \N$. 
	Assume $P \in \PsiDO{m}{M,\matN}$ having a scalar principal symbol $\symb{P}$. 
	Suppose that $Q \in \PsiDO{s}{M,\matN}$ is non-characteristic at some $\xxiNot \in \dotCoTanM$ and that $\tilde Q \in \PsiDO{-s}{M,\matN}$ is a microlocal parametrix for $Q$ in an open conic neighbourhood $\cU$ of $\xxiNot$. 
	Then the subprincipal symbol of $\tilde{Q} P Q$ is 
	\begin{equation*}
		\subSymb{\tilde Q P Q} \xxi 
		= (\symb{Q}^{-1} \, \subSymb{P} \, \symb{Q}) \xxi 
		- \ri \big( \symb{Q}^{-1} \, X_P(\symb{Q}) \big) \xxi 
	\end{equation*}
	for any $\xxi \in \cU$, where $X_{P}$ is the Hamiltonian vector field generated by $\symb{P}$. 
\end{proposition}

%
%
%
\begin{proof}
	First note that $\xxiNot \notin \ES{\tilde Q P Q - P - \tilde Q [P,Q]}$. 
	Since $P$ has a scalar principal symbol, the order of $[P,Q]$ is $m+s-1$, and hence the order of $\tilde Q [P,Q]$ is $m-1$.
 	Thus, $\subSymb{\tilde Q P Q} = \subSymb{P}  + \symb{\tilde Q [P,Q]}$ on $\cU$. 
 	It remains to compute the principal symbol of $[P,Q]$. Since the principal symbol of $P$ is scalar-valued, we obtain the commutator relation 
 	(see e.g.~\cite[(14)]{Jakobson_CMP_2007}) 
 	\begin{equation} \label{eq: symbol_commutator_PsiDO}
 		\symb{[P,Q]} = - \ri \{\symb{P},\symb{Q}\} + [\subSymb{P},\symb{Q}],  
 	\end{equation}
 	as a consequence of the multiplication formula~\eqref{eq: subprincipal_symbol_product}. 
	Therefore, on $\cU$, we get $\symb{Q^{-1} [P,Q]} = - \ri \symb{Q}^{-1} \{ \symb{P}, \symb{Q}\} + \symb{Q}^{-1}  \symb{P}  \symb{Q} -  \symb{P}$ and the result follows.
\end{proof}
%
%
%

Let $P \in \PsiDO{m}{M,\matN}, Q \in \PsiDO{m'}{M,\matN}, R \in \PsiDO{m''}{M,\matN}$ be properly supported. 
Then, an application of the product formula~\eqref{eq: subprincipal_symbol_product} yields 
\begin{align}
	\subSymb{PQR}
	& = 
	\subSymb{P} \symb{Q} \symb{R} + \symb{P} \subSymb{Q} \symb{R} + \symb{P} \symb{Q} \subSymb{R} 
	\nonumber \\ 
	& + \frac{1}{2 \ri} \Big( \symb{P} \{ \symb{Q}, \symb{R} \} + \{ \symb{P}, \symb{Q} \} \symb{R} 
	+ 
	\frac{\partial \symb{P}}{\partial x^{i}} \symb{Q} \frac{\partial \symb{R}}{\partial \xi_{i}} - \frac{\partial \symb{P}}{\partial \xi_{i}} \symb{Q} \frac{\partial \symb{R}}{\partial x^{i}} \Big)
\end{align}
If $P$ and $Q$ have scalar principal symbols then $\subSymb{QPQ} = 2 \symb{P} \symb{Q} \subSymb{Q} + \symb{Q}^{2} \subSymb{P}$. 
This entails the following result. 

%
%
%
\begin{proposition} \label{prop: P_orderonerescale}
	Let $M$ be a manifold and let $m, s \in \R, N \in \N$.  
	Suppose that $P \in \PsiDO{m}{M,\matN}, Q \in \PsiDO{s}{M,\matN}$ having scalar principal symbols and that $Q$ has vanishing subprincipal symbol.
	Then $\subSymb{QPQ} = \symb{Q}^2 \, \subSymb{P}$. 
\end{proposition}

%
%
%
\begin{definition} \label{def: P_compatible_connection}
	Let $E \to M$ be a smooth complex vector bundle over a manifold $M$ and let $P \in \PsiDO{m}{M; E}, m \in \R$. 	
	A connection $\nabla^{E}$ on $E$ will be called $P$-\textit{compatible} if and only if 
	\begin{equation*}
		\mathsf{\Gamma} \big( (\rd_{x} \pi) X_{P} \big) \xxi = \ri \subSymb{P} \xxi    
	\end{equation*}
	for all $\xxi \in \Char{P}$, where $\subSymb{P}$ resp. $\Char{P}$ are the subprincipal symbol resp. the characteristic set of $P$, $\mathsf{\Gamma}$ is the connection $1$-form of $\nabla^{\pi^{*} E}$ and $\pi: \dotCoTanM \to M$ is the punctured cotangent bundle. 
	In other words, a $P$-compatible connection $\nabla^{E}$ indices the covariant derivative 
	\begin{equation*}
		\nabla_{X_{P}}^{\pi^{*} E} = X_{P} + \mathsf{\Gamma} \big( (\rd \pi) X_{P} \big) 
	\end{equation*}
	on the bundle $\pi^{*} E \to \dotCoTanM$ along the Hamiltonian vector field $X_{P}$ generated by the principal symbol of $P$. 
\end{definition}
%
%
%

This definition makes sense because both quantities, $\mathsf{\Gamma}$ and $\subSymb{P}$ have the same transformation law under change of bundle frames as shown in Proposition~\ref{prop: P_connection_transformation}. 
Hence $X_P + \ri \subSymb{P}$ has an invariant meaning. 

%
%
%
%
\begin{proposition} \label{prop: L_P_L_compatible_connection}
	Let $E \to M$ be a smooth complex vector bundle over a manifold $M$ and $m, s \in \R$.  
	Suppose that $P \in \PsiDO{m}{M; E}$ has a scalar principal symbol $\symb{P}$ and that $\nabla^{E}$ is a $P$-compatible connection. 
	Assume that $Q \in \PsiDO{s}{M; E}$ has a scalar principal symbol. 
	Assume further that for every point in $M$ there is an open neighbourhood and a local trivialisation of $E$ such that $Q$ has vanishing subprincipal symbol with respect to this local trivialisation. 
	Then $\nabla^{E}$ is $QPQ$-compatible.  
\end{proposition}
%
%
%
\begin{proof}
	We fix one bundle frame to check this.
	Define $\tilde P = Q P Q$. 
	Then 
	$
		X_{\tilde P} = \{\symb{\tilde P} , \cdot \} = \{\symb{Q}^2 \, \symb{P}, \cdot \} =  \symb{Q}^2 \{\symb{P} , \cdot \} + 2 \symb{P} \, \symb{Q}  \{\symb{Q} , \cdot \}
	$.
	When restricted to covectors in $\Char{P}$ this equals $\symb{Q}^2 \{\symb{P} , \cdot \} = \symb{Q}^2 X_{P}$.
	Hence, on $\Char{P}$ we have 
	\begin{equation*}
		\nabla_{X_{\tilde P}}^{\pi^{*} E} - X_{\tilde P} = \symb{Q}^2  \Big( \nabla_{X_{P}}^{\pi^{*} E} - X_{P} \Big) = \ri \symb{Q}^2 \, \subSymb{P} = \ri \subSymb{Q P Q} =\ri \subSymb{\tilde P} 
	\end{equation*}
	as an application of Proposition~\ref{prop: P_orderonerescale}. 
\end{proof}
%
%
%

The main observation is now that the Weitzenb\"{o}ck connection defined by $\square$ is compatible with this operator in the above sense. 

%
%
\begin{theorem} \label{thm: NHOp_compatible_connection}
	Let $E \to M$ be a smooth complex vector bundle over a spacetime $(M, g)$. 
	Suppose that $\square$ is a normally hyperbolic operator on $E$ and that $\nabla^{E}$ is the Weitzenb\"ock connection.
	Then $\nabla^{E}$ is $\square$-compatible.
\end{theorem}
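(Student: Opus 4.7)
The plan is to verify the pointwise equality $\subSymb{\square}(x_0, \xi) = W(x_0, \xi)$ at an arbitrary point $x_0 \in M$ by a local computation in a bundle frame and coordinate chart tailored to $x_0$, in which both sides simultaneously vanish; invariance of their difference under change of frame will then finish the proof. Concretely, I would fix $x_0 \in M$ and use geodesic normal coordinates for $(M, g)$ centred at $x_0$, combined with a local frame of $E$ obtained by parallel transport (with respect to $\nabla^E$) of a basis of $E_{x_0}$ along the radial geodesics through $x_0$. This standard construction gives
\begin{equation*}
g_{ij}(x_0) = \eta_{ij}, \quad \partial_k g_{ij}(x_0) = 0, \quad \Gamma^k_{ij}(x_0) = 0, \quad \Gamma_i(x_0) = 0,
\end{equation*}
where $\Gamma^k_{ij}$ are the Christoffel symbols of $g$ and $\Gamma_i$ is the matrix of the connection $1$-form of $\nabla^E$, so that $\nabla_i = \partial_i + \ri \Gamma_i$ in this frame.

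Next I would expand $\square = -\tr_g \nabla^2 + V$ locally using the identity $\tr_g \nabla^2 = g^{ij}(\nabla_i \nabla_j - \Gamma^k_{ij} \nabla_k)$. A direct computation gives a total symbol $p = p_2 + p_1 + p_0$ with
\begin{equation*}
p_2(x, \xi) = g^{ij}(x) \xi_i \xi_j, \qquad p_1(x, \xi) = 2 g^{ij}(x) \xi_j \Gamma_i(x) + \ri g^{ij}(x) \Gamma^k_{ij}(x) \xi_k.
\end{equation*}
Evaluated at $x_0$ in our chart and frame, both summands of $p_1$ vanish, and the correction $\partial^2 p_2 / (\partial x^i \partial \xi_i)|_{x_0} = 2 (\partial_i g^{ij})(x_0) \xi_j = 0$ since all first derivatives of $g^{ij}$ vanish at $x_0$. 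Formula~\eqref{eq: subprincipal_symbol} then gives $\subSymb{\square}(x_0, \xi) = 0$. On the other hand, reading off $\pi_* X_\square(x_0, \xi) = 2 g^{ij}(x_0) \xi_j \partial_i$ from $\symb{\square}(\xi) = g^{ij}\xi_i \xi_j$, one obtains $W(x_0, \xi) = 2 \xi^i \Gamma_i(x_0) = 0$ as well.

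To conclude, I would observe that $\subSymb{\square} - W$ is a genuine section of $\End{\pi^* E}$ over $\dot{T}^* M$, independent of the bundle frame. Under a gauge transformation $u \mapsto Uu$, both $\subSymb{\square}$ and $W$ transform by exactly the same affine rule
\begin{equation*}
A \; \longmapsto \; U^{-1} A U + \frac{1}{\ri} U^{-1} X_\square (U)
\end{equation*}
---the first by Proposition~\ref{prop: P_connection_transformation} with $Q = U$, the second by the standard transformation law of a connection $1$-form evaluated on $\pi_*(X_\square)$. Thus the vanishing of $\subSymb{\square} - W$ at $x_0$ in one frame implies its vanishing in every frame, and by arbitrariness of $x_0$ the equality holds on all of $\dot{T}^* M$, hence in particular on $\Char{\square}$. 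The one point requiring care---which I would flag explicitly---is that formula~\eqref{eq: subprincipal_symbol} is intrinsic only for operators acting on half-densities (the paper's running convention); but in our normal coordinates $\sqrt{|g|} = 1 + O(|x - x_0|^2)$, so the identification of sections with half-densities introduces no correction at $x_0$ and the formula applies as written.
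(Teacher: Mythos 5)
Your proof is correct, and it takes a genuinely different route from the paper's. The paper verifies $\subSymb{\square} = W$ by a direct computation in an \emph{arbitrary} local chart and bundle frame: it expands $\square$ via the Weitzenb\"{o}ck formula, carries out the half-density conjugation $|\det g|^{1/4}\,\square\,|\det g|^{-1/4}$ explicitly, and reads off $\subSymb{\square} = -2g^{\mu\nu}\Gamma^i_{\nu j}\xi_\mu$, which it then matches against $\Gamma(\pi_*(X_\square))$. You instead pick a \emph{privileged} chart and frame --- geodesic normal coordinates and a radially $\nabla^E$-parallel frame centred at $x_0$ --- in which all the first-order data ($\partial_k g^{ij}$, $\Gamma^{\mathrm{LC}\,k}_{\quad ij}$, $\Gamma_i$) vanish at $x_0$, so that $\subSymb{\square}(x_0,\cdot) = 0 = W(x_0,\cdot)$ with essentially no computation, and then invoke the covariance of $\subSymb{\square}-W$ under changes of bundle frame (Proposition~\ref{prop: P_connection_transformation} applied to $Q = M_U$ for the former, the affine transformation law of a connection $1$-form paired with $\pi_*(X_\square)$ for the latter) to promote the vanishing at $x_0$ in one frame to vanishing in all frames, and hence, $x_0$ being arbitrary, on all of $\dot T^*M$. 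This buys a cleaner, less error-prone argument: you never need to check the cancellation between the Levi-Civita contribution and the half-density correction terms, because both are $O(|x-x_0|)$ in normal coordinates and you only evaluate at the centre. The point you correctly flag at the end --- that~\eqref{eq: subprincipal_symbol} is chart-invariant only for the half-density conjugated operator --- is precisely the subtlety one must address, and your resolution ($|\det g| = 1 + O(|x-x_0|^2)$, so $\partial_\mu|\det g|^{1/4}$ vanishes at $x_0$ and the conjugation introduces no first-order correction there) is exactly right. Both arguments in fact establish the equality on all of $\dot T^*M$, not merely on $\Char{\square}$ as Definition~\ref{def: P_compatible_connection} requires.
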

%
%
%
\begin{proof}
	Since the potential $V$ in~\eqref{eq: Weitzenboeck_connection_NHO} is of order zero, it does not contribute to the subprincipal symbol. 
	We will check compatibility in a local frame $(e_1, \ldots, e_{\rk E})$. 
	In local coordinates $(x^{\mu}), \mu = 1, \ldots, n$, one has $\nabla_{\mu}^{E} = \partial_{\mu} + \Gamma_{\mu}$ where $\Gamma$ is the connection $1$-form of the Weitzenb\"{o}ck connection: $\nabla_{\mu}^{E} e_{i} = \Gamma_{\mu i}^{j} e_{j}$. 
	The Weitzenb\"{o}ck formula~\eqref{eq: Weitzenboeck_connection_NHO} then reads 
	$
		- \tr_{g} \big( \nabla^{T^{*} M \otimes E} \circ \nabla^{E} \big) + V  
		=
		- g^{\mu \nu} \nabla_{\mu}^{E} \nabla_{\nu}^{E} 
		+ g^{\mu \nu} \Gamma_{\quad \mu \nu}^{\mathrm{LC} \, \rho} \partial_{\rho} +V_1
	$
	for some potential $V_1$ and where $\Gamma^{\mathrm{LC}}$ is the Levi-Civita connection. 
	This expression is the formula for the operator acting on functions. 
	The formula~\eqref{eq: subprincipal_symbol} for the subprincipal symbol needs to be applied to the full symbol of the operator acting on halfdensities. 
	Since we have the canonical Lorentzian volume form $\sqrt{|\det g|} \rd x^{1} \wedge \ldots \wedge \rd x^{n}$ on $M$, the formula for $\square$ on halfdensities in local coordinates is given by
	\begin{equation}
		|\det g|^{\frac{1}{4}} \; \square \; |\det g|^{-\frac{1}{4}}
		= 
		- g^{\mu \nu} \nabla_{\mu}^{E} \nabla_{\nu}^{E} - \frac{\partial g^{\mu \nu}}{\partial x^\mu} \partial_\nu + V_2  
	\end{equation}
	for some potential $V_{2}$. 
	Using this representation, we see that the subprincipal symbol of $\square$ is 
	\begin{equation} \label{eq: subprincipal_symbol_NHOp}
		\subSymb{\square} \xxi = - 2 \ri g^{\mu \nu} \Gamma_{\mu} \xi_\nu. 
	\end{equation}
	The Hamiltonian vector field of the principal symbol of $\square$ in local coordinates is  given by 
	\begin{equation} \label{eq: HVF_NHOp}
		X_{\square} = 2 g^{\mu \nu} \xi_\mu \frac{\partial}{\partial x^\nu} - \frac{\partial g^{\mu \nu}}{\partial x^\alpha} \xi_\mu \xi_\nu \frac{\partial}{\partial \xi_\alpha}.
	\end{equation}
	Hence, $(\rd_{x} \pi) X_{\square} = 2 g^{\mu \nu} \xi_\mu \partial_{x^{\nu}}$ and we can see that
	$
		\Gamma \big( (\rd_{x} \pi) X_{\square}) \big) = 2 g^{\mu \nu} \Gamma_{\mu} \xi_\nu = \ri \subSymb{\square} \xxi
	$.
\end{proof}
%
%
%

As a simple application of this theorem and Proposition~\ref{prop: L_P_L_compatible_connection} we have

%
%
%
\begin{corollary} \label{cor: L_NHOp_L_compatible_connection}
	Let $E \to M$ be a smooth complex vector bundle over a spacetime $(M, g)$. 
	Suppose that $\square$ is a normally hyperbolic operator on $E$ and that $Q \in \PsiDO{-1/2}{M; E}$ is properly supported with a scalar principal symbol.  
	Assume that for each point in $M$, there is an open neighbourhood over which $E$ admits a local trivialisation such that the subprincipal symbol of $Q$ vanishes with respect to this local trivialisation. 
	Then $\nabla^{E}$ is $Q \square Q$-compatible, i.e., the Weitzenb\"ock connection is compatible with the first-order operator $Q \square Q$.
\end{corollary}
%
%
%
%
%
%
%
%
%
%
%
%
%
\subsection{Microlocal conjugate of a normally hyperbolic operator} 
\label{sec: microlocalisation_NHOp}
Two pseudodifferential operators are called microlocally conjugate if they can be conjugated to one another by an elliptic Fourier integral operator once they have been appropriately localised in cotangent space. 
The key point is that under some natural assumptions any first-order pseudodifferential operator can be microlocally conjugated to a vector field. 
This is originally due to 
Duistermat-H\"{o}rmander~\cite[Prop. 6.1.4]{Duistermaat_ActaMath_1972} 
for scalar operators of real principal type, which has been extended to vector bundles by 
Dencker~\cite{Dencker_JFA_1982} 
who formulated microlocal conjugate of a system of classical pseudodifferential operators locally of real principal type.  
More precisely, Dencker transformed the system of operators to a scalar pseudodifferential operator $P$ (modulo smoothing operators) with vanishing subprincipal symbol by conjugating with a system of elliptic pseudodifferential operators.  
Then, he depicted the microlocal conjugation of $P$ with $D_{1} := - \ri \one_{\matN} \partial_{1}$ by appropriate elliptic Fourier integral operators associated with the graph of a symplectomorphism locally connecting $\Char{P}$ to $\Char{D_{1}}$.  

In this section, we will explain microlocalisation in an intrinsic geometric language and present Dencker's result in a slightly more general form. 
Let us now consider compactly supported complex-matrix valued smooth functions $C_{\textrm{c}}^{\infty} \big( \Rn, \CN \big)$ on Euclidean space $\Rn$ and let $D_{1} : C_{\textrm{c}}^{\infty} \big( \Rn, \CN \big) \to C_{\textrm{c}}^{\infty} \big( \Rn, \CN \big)$. 
We will show now that any first order pseudodifferential operator $P$ of real principle type on $E$ having a scalar principal symbol is microlocally conjugate to $D_{1}$ in the sense of~\eqref{eq: microlocal_conjugate_P} in the theorem below. 

%
%
%
\begin{center}
	\begin{tikzpicture}
		\node (a) at (0,0) {$\cU$};
		\node (b) at (4,0) {$\cU'$}; 
		\node (c) at (0,2) {$\pi^{*} \Hom{E, E}_{\cU}$};
		\node (d) at (4,2) {$\matN$}; 
		\node[below] at (2, 0) {$\varkappa$}; 
		\node[left] at (0,1) {$\symb{P}$}; 
		\node[above] at (2.1, 2) {$\hat{\varkappa}$}; 
		\node[right] at (4,1) {$\symb{D_{1}}$};  
		\draw[->] (b) -- (a); 
		\draw[->] (d) -- (c); 
		\draw[->] (a) -- (c); 
		\draw[->] (b) -- (d);
	\end{tikzpicture}
	\hfil
	\begin{tikzpicture}
		\node (a) at (0,0) {$C_{\textrm{c}}^{\infty} (M; E)$};
		\node (b) at (5,0) {$C_{\textrm{c}}^{\infty} (\Rn, \CN)$}; 
		\node (c) at (0,2) {$C_{\textrm{c}}^{\infty}(M; E)$};
		\node (d) at (5,2) {$C_{\textrm{c}}^{\infty} (\Rn, \CN)$}; 
		\node[below] at (2.2, 0) {$B$}; 
		\node[left] at (0,1) {$P$}; 
		\node[above] at (2.2, 2) {$\tilde{B}$}; 
		\node[right] at (5,1) {$D_{1}$}; 
		\draw[->] (b) -- (a); 
		\draw[->] (a) -- (c); 
		\draw[->] (c) -- (d); 
		\draw[->] (b) -- (d);
	\end{tikzpicture}
	\captionof{figure}{A 
		schematic diagram of microlocalisation. 
		The diagram on the right commutes in a microlocal sense near the point $\xxiNot$ and the map $\hat{\varkappa}$ is defined by 
		$\symb{P} = \hat{\varkappa} (\symb{D_{1}}) = \symb{B} \circ \symb{D_{1}} \circ \symb{\tilde{B}}$.
		}
	\label{fig: microlocalisation}
\end{center}

%
%
%
\begin{theorem}[Microlocalisation] 
\label{thm: microlocalisation_P}
    Let $E \to M$ be a smooth complex vector bundle of rank $N$ over an  $n$-dimensional manifold $M$.	
    Suppose that $P \in \PsiDO{1}{M; E}$ is a properly supported first-order pseudodifferential operator on $E$ with real scalar principal symbol $\symb{P}$ such that: 
	\begin{enumerate}[label=(\alph*)]
		\item \label{con: p_x_xi_zero}
		$\symb{P} \xxiNot = 0$ for some element $\xxiNot$ in the punctured cotangent bundle $\dotCoTanM$ and  
		\item \label{con: HVF_radial}
		the Hamiltonian vector field $X_{P}$ of $\symb{P}$ and the radial direction are linearly independent at $\xxiNot$.
        \end{enumerate}
	Then for any $m \in \R$, there exist 
	\begin{enumerate}[label=(\roman*)] 
		\item
		a homogeneous symplectomorphism $\varkappa$ from an open conic neighbourhood $\cU'$ of $(0, \eta_{1} \rd y^{1})$ in $\dotCoTanRn$ to an open conic coordinate chart $\big( \cU, (x^{1}, \ldots, x^{n}$; $\xi_{1}, \ldots, \xi_{n}) \big)$ of $\xxiNot$ in $\dotCoTanM$ such that 
		\begin{equation} \label{eq: choice_symplecto}
			\varkappa^{*} \symb{P} = \xi_{1} \mathbbm{1}_{\Hom{E, E}} 
		\end{equation} 
		and 
		\item 
		properly supported Lagrangian distributions 
		$B \in I^{m} \big( M \times \Rn, \varGamma'; \Hom{\C^{N}, E} \big)$   
		and 
		$\tilde{B} \in I^{-m} \big( \Rn \times M, \varGamma^{-1 \prime}; \Hom{E, \C^{N}} \big)$ 
		so that $B \tilde{B}, \tilde{B} B$ both are zero-order pseudodifferential operators and  
		\begin{subequations}
			\begin{align}
				& 
				\WFPrime{B} \subset \cU_{(x_{0}, \xi^{0}; 0, \eta_{1} \rd y^{1})}, 
				& 
				\WFPrime{\tilde{B}} \subset \cU'_{(0, \eta_{1} \rd y^{1}; x_{0}, \xi^{0})},
				\label{eq: microlocalisation_1}
				\\ 
				& 
				(x_{0}, \xi^{0}) \notin \ES{B \tilde{B} - I_{E}}, 
				& 
				(0, \eta_{1} \rd y^{1}) \notin \ES{\tilde{B} B - I}, 
				\label{eq: microlocalisation_2}
				\\ 
				& 
				(x_{0}, \xi^{0}) \notin \ES{B D_{1} \tilde{B} - P}, 
				& 
				(0, \eta_{1} \rd y^{1}) \notin \ES{\tilde{B} P B - D_{1}},       
				\label{eq: microlocal_conjugate_P}
			\end{align}
		\end{subequations}
		where $\varGamma$ is the graph of $\varkappa$, $D_{1} := - \ri \one_{\matN} \partial / \partial y^{1} : C_{\mathrm{c}}^{\infty} (\Rn, \CN) \to C_{\mathrm{c}}^{\infty} (\Rn, \CN)$, and $\cU_{(x_{0}, \xi^{0}; 0, \eta_{1}\rd y^{1})}$ resp. $\cU'_{(0, \eta_{1}\rd y^{1}; x_{0}, \xi^{0})}$ are small conic neighbourhoods of $(x_{0}, \xi^{0}; 0, \eta_{1} \rd y^{1}) \in \dotCoTanM \times \dotCoTanRn$ resp. $(0, \eta_{1}\rd y^{1}; x_{0}, \xi^{0}) \in \dotCoTanRn \times \dotCoTanM$. 
	\end{enumerate}

	In addition, if $m=0$ and $E \to M$ is endowed with a sesquilinear form $(\cdot|\cdot)$ with respect to which $P$ is formally selfadjoint then $\tilde{B}$ can be chosen as the adjoint of $B$ provided that $\C^N$ is endowed with a standard sesquilinear form of the same signature as $(\cdot|\cdot)$.  

	In case,  $m=0$ and $E \to M$ is equipped with a $P$-compatible connection $\nabla^{E}$ then the principal symbols of $B$ resp. $\tilde{B}$ can be chosen $\one$ near $(x_{0}, \xi^{0}; 0, \eta_{1} \rd y^{1})$ resp. $(0, \eta_{1} \rd y^{1}; x_{0}, \xi^{0})$ with respect to a frame that is parallel along $X_{P}$. 

	Whenever, $\big( E \to M, (\cdot|\cdot), \nabla^{E} \big)$ is a vector bundle with a $P$-compatible connection $\nabla^{E}$ and a sesquilinear form $(\cdot|\cdot)$ such that $P$ is formally selfadjoint with respect to $(\cdot|\cdot)$, and $m=0$, then we can choose $B$ such that the principal symbol of $B$ equals $\one$ near $(x_{0}, \xi^{0}; 0, \eta_{1} \rd y^{1})$ with respect to a frame that is unitary and parallel along $X_{P}$, and  $\tilde{B}=B^*$.
\end{theorem}
%
%
%

A schematic of this notion has been portrayed in Figure~\ref{fig: microlocalisation}.  

%
%
%
\begin{proof}
	We will prove the proposition imitating the strategy used for the scalar version~\cite[Prop. 6.1.4 and Lem. 6.6.4]{Duistermaat_ActaMath_1972} 
	(see also~\cite[Prop. 26.1.3]{Hoermander_Springer_2009}). 
	The existence of $\varkappa$ satisfying~\eqref{eq: choice_symplecto} is guaranteed by a 
	homogeneous Darboux theorem~\cite[Lem. 6.6.3]{Duistermaat_ActaMath_1972} 
	(see also~\cite[Thm. 21.3.1]{Hoermander_Springer_2007}) 
	which prerequisites our hypotheses~\ref{con: p_x_xi_zero} and~\ref{con: HVF_radial}.  
	Suppose that $b \in S^{m} \big( \varGamma; \Maslov \otimes \widetilde{\mathrm{Hom}} (\CN, E) \big)$ has an inverse in a conic neighbourhood of $(x_{0}, \xi^{0}; 0, \eta_1 \rd y^{1}) \in \varGamma$. 
	Then we can obtain a properly supported  $B_{1} \in I^{m} \big( M \times \Rn, \varGamma'; \Hom{\C^{N}, E} \big)$ such that $\WFPrime{B_{1}} \subset \cU_{(x_{0}, \xi^{0}; 0, \eta_1 \rd y^{1})}$ and $B_{1}$ is non-characteristic (see Definition~\ref{def: elliptic_FIO}) at $(x_{0}, \xi^{0}; 0, \eta_1 \rd y^{1})$ from the construction given in 
	Appendix~\ref{sec: FIO_symplecto_bundle}. 

	By Theorem~\ref{thm: existence_parametrix_FIO}, there exists a unique microlocal parametrix $\tilde{B}_{1} \in I^{- m} \big( \Rn \times M, \varGamma^{-1 \prime}$; $\Hom{E, \C^{N}} \big)$ such that~\eqref{eq: microlocalisation_2} is satisfied . 
	Since $B_{1}$ and $\tilde{B}_{1}$ have reciprocal principal symbols to each other on $\cU_{(x_{0}, \xi^{0}; 0, \eta_1 \rd y^{1})}$, $\tilde{B}_{1} P B_{1}$ has the principal symbol $\eta_{1} \one_{\matN}$ on $\cU'$ due to~\eqref{eq: choice_symplecto} and the Egorov Theorem~\ref{thm: Egorov}. 
	Furthermore 
	\begin{equation} \label{eq: WF_B1_P_A1_D_Q}
		(0, \eta_1 \rd y^{1}) \notin \ES{\tilde{B}_{1} P B_{1} - D_{1} - Q}, 
	\end{equation}
	for some $Q \in \PsiDON{0}$. 

	To find the operators $B$ and $\tilde{B}$ with the claimed properties, we construct properly supported elliptic $B_{2}, \tilde{B}_{2} \in \PsiDON{0}$ such that (recall, $\equiv$ means modulo smoothing operators) 
	\begin{eqnarray}
		&& 
		\tilde{B}_{2} B_{2} \equiv I, 
		\label{eq: 26_1_03_Hoermander}
		\\ 
		&& 
		\tilde{B}_{2} (D_{1} + Q) B_{2} \equiv D_{1},    
		\label{eq: 26_1_3_Hoermander}
	\end{eqnarray}
	and we set $B := B_{1} B_{2}$ and $\tilde{B} := \tilde{B}_{2} \tilde{B}_{1}$.  
	Then~\eqref{eq: 26_1_03_Hoermander} and~\eqref{eq: 26_1_3_Hoermander} together with~\eqref{eq: WF_B1_P_A1_D_Q} imply 
	\begin{eqnarray*}
		(0, \eta_1 \rd y^{1}) \notin \ES{\tilde{B}_{2} (\tilde{B}_{1} B_{1} - I) B_{2}} 
		= 
		\ES{\tilde{B} B - I},  
		\\ 
		(0, \eta_1 \rd y^{1}) \notin \ES{\tilde{B}_{2} (\tilde{B}_{1} P B_{1} - D_{1} - Q) B_{2}} 
		= 
		\ES{\tilde{B} P B - D_{1}}, 
	\end{eqnarray*}
	which proves the second half of~\eqref{eq: microlocalisation_2} and~\eqref{eq: microlocal_conjugate_P}, and the first half follows immediately after we multiply from left and right by $B$ and by $\tilde{B}$.

	It therefore remains to construct $B_2$ and $\tilde{B}_2$ such that~\eqref{eq: 26_1_03_Hoermander} and~\eqref{eq: 26_1_3_Hoermander} hold.
	By the existence of a parametrix 
	(see e.g.~\cite[Thm. 18.1.24]{Hoermander_Springer_2007}), 
	for every elliptic operator $B_{2} \in \PsiDON{0}$ there exists $\tilde{B}_{2} \in \PsiDON{0}$ such that $\tilde{B}_{2} B_{2} - I$ and $B_{2} \tilde{B}_{2} - I$ are smooth. 
	Multiplying~\eqref{eq: 26_1_3_Hoermander} by $B_{2}$ from the left we arrive at the equivalent condition for~\eqref{eq: 26_1_3_Hoermander} that 
	\begin{equation} \label{eq: 26_1_3_Hoermander_equiv} 
		[D_{1}, B_{2}] \equiv - Q B_{2}   
		\tag{\ref{eq: 26_1_3_Hoermander}$^{\prime}$} 
	\end{equation}
	for some elliptic $B_{2}$. 
	We will now construct a solution of~\eqref{eq: 26_1_3_Hoermander_equiv} order by order, starting with the principal symbol. 
	The principal symbol of~\eqref{eq: 26_1_3_Hoermander_equiv} vanishes provided  
	\begin{equation}
		- \ri \left\{ \symb{D_{1}}, \symb{B_{2}} \right\} + \symb{Q} \symb{B_{2}} = 0,   
	\end{equation} 
	as the the subprincipal symbol of $D_{1}$ vanishes; cf.~\eqref{eq: symbol_commutator_PsiDO}.  
	If $q$ is the principal symbol of $Q$, then the preceding equation yields 
	\begin{equation} \label{eq: microlocalisation_first_transport}
		\parDeri{y^{1}}{b_0} = - \ri q b_0    
	\end{equation}
	for the principal symbol $b_0$ of $B_2$. 
	This is a first-order differential equation, hence a unique solution exists, given the initial condition $b_0 (y^{1}=0, \cdot) = \one_{\CN}$ and this solution depends smoothly on $q$.
	By construction, $b_0 \yeta$ is homogeneous of degree zero and $\det (b_0 \yeta)$ is non-vanishing. 
	Therefore $b_0^{-1}$ exists and is homogeneous of degree zero.  
	Defining a properly supported $B_{2, 0} \in \PsiDON{0}$ with homogeneous principal symbol $b_0$, we now successively construct $B_{2, k} \in \PsiDON{-k}$ 
	so that, for every $k \in \N$:  
	\begin{equation} \label{eq: microlocalisation_k_transport} 
		[D_{1}, B_{2, 0} + \ldots + B_{2, k}] 
		+ 
		Q (B_{2, 0} + \ldots + B_{2, k}) = R_{k+1} \in \PsiDON{- (k+1)}.   
	\end{equation} 
	This is equivalent to the corresponding homogeneous principal symbols $b_{k}$ of $B_{2, k}$ and $r_{k}$ of $R_{k}$ of degree $-k$, to satisfy	
	\begin{equation}
		- \ri \parDeri{y^{1}}{b_{k}} + q b_{k} = - r_{k}. 
	\end{equation} 	
	This equation can be solved by the Duhamel principle and the solution reads 
	\begin{equation} \label{eq: microlocalisation_sol_k_transport}
		b_{k} \yeta = - \ri b_0 \yeta \int_{0}^{y^{1}} 
		b_0^{-1} (t, y^{2}, \ldots, y^{n}; \eta) \, r_{k} (t, y^{2}, \ldots, y^{n}; \eta) \, \rd t.   
	\end{equation}
	Then, using asymptotic summation of the symbols of $B_{2,k}$ (see Definition~\ref{def: polyhomogeneous_symbol_mf}), we can now construct an operator $B_2$ satisfying \eqref{eq: 26_1_3_Hoermander_equiv}. 	
	\newline 

	\noindent 
	\textbf{The case when $P$ is formally selfadjoint:} 
	\\ 
	We are going to prove that the operators $B$ and $\tilde{B}$ can be chosen microlocally unitary in case $P$ is formally selfadjoint with respect to $(\cdot|\cdot)$. 
	We endow the space $C_{\mathrm{c}}^{\infty} (\Rn, \CN)$ with a standard sesquilinear scalar product of the same signature as $(\cdot|\cdot)$. 
	The operator $D_{1}$ is then formally selfadjoint.  
	Futhermore, we make the choice $m = 0$. 
	Acting with $B \in I^{0} \big( M \times \Rn, \varGamma'; \Hom{\CN, E} \big)$ from the left of~\eqref{eq: microlocal_conjugate_P} gives the equivalent microlocal conjugate relation between   
	\begin{equation}
		(x_{0}, \xi^{0}; 0, \eta_1 \rd y^{1}) \notin \WFPrime{PB - B D_{1}}  
	\end{equation}
	$P$ and $D_{1}$. 
	Taking its adjoint we have $(0, \eta_1 \rd y^{1}; x_{0}, \xi^{0}) \notin \WFPrime{B^{*} P - D_{1} B^{*}}$ where  $B^{*} \in I^{0} \big( \Rn \times M, \varGamma^{-1 \prime}; \Hom{E^{*}, \CN} \big)$ and consequently   
	\begin{eqnarray*}
		&& (0, \eta_1 \rd y^{1}) \notin \ES{B^{*} P B - D_{1} B^{*} B}, 
		\\ 
		&& (0, \eta_1 \rd y^{1}) \notin \ES{B^{*} P B - B^{*} B D_{1}}.
	\end{eqnarray*}  
	Thus $(0, \eta_1 \rd y^{1}) \notin \ES{[B^{*} B, D_{1}]}$ and $B$ is non-characteristic at $(0, \eta_1 \rd y^{1})$. 
	Now the operator $B^{*} B$ is a pseudodifferential operator and since $\ri D_{1}$ is differentiation with respect to $y^1$, the total symbol of the commutator $[B^{*} B, D_{1}]$ is the $-\ri$ times the $y^1$-derivative of the total symbol of $B^{*} B$.
	Thus, on the the level of symbols, the above implies that the total symbol of $B^{*} B$ is the sum of a term independent of $y^1$ and a term that is rapidly decaying in a conic neighbourhood $\cU'$ of $(0, \eta_1 \rd y^{1})$. 
	It is bounded below on $\cU'$ because of the ellipticity of $B^{*} B$ there. 
	By Proposition~\ref{prop: FIO_2_2_2_matrix}, one can find a selfadjoint properly supported $\varPsi \in \PsiDONU{0}$ whose principal symbol is the same as that of $B$ and such that  
	\begin{subequations}
		\begin{eqnarray}
			&& 
			(0, \eta_1 \rd y^{1}) \notin \Char{\varPsi},
			\label{eq: microlocalisation_varPsi_1}
			\\ 
			&& 
			(0, \eta_1 \rd y^{1}) \notin \ES{\varPsi^{*} \varPsi - B^{*} B}, 
			\label{eq: microlocalisation_varPsi_2}
			\\ 
			&& 
			(0, \eta_1 \rd y^{1}) \notin \ES{[\varPsi, D_{1}]}, 
			\label{eq: microlocalisation_varPsi_3} 
		\end{eqnarray}
	\end{subequations}
	where $U'$ is the projection of $\cU'$ on $\Rn$. 
	Note that Proposition~\ref{prop: FIO_2_2_2_matrix} (stated for operators acting on the same bundle) can be used only in a local trivialisation of $E$ by an orthonormal frame that identifies the fibre of $E$ with $\C^N$ in such a way that the sesquilinear forms are identified. It is at this stage that we must require the sesquilinear forms to have the same signature.
	Here, the last property follows from the fact that the construction of the full symbol of $\varPsi$ in Proposition~\ref{prop: FIO_2_2_2_matrix} involves only multiplication and asymptotic summation of symbols. 
	The property of a symbol being a sum of two terms, one independent of $y_1$ and another rapidly decaying on $\cU'$, is preserved under these operations. 
	The full symbol of $\varPsi$ is therefore also of this form.

	The ellipticity~\eqref{eq: microlocalisation_varPsi_1} entails a unique microlocal parametrix $\varPhi$ for $\varPsi$. 
	In other words, there exists a properly supported $\varPhi \in \PsiDONU{0}$ such that 
	\begin{subequations}
		\begin{eqnarray}
			&&  
			(0, \eta_1 \rd y^{1}) \notin \Char{\varPhi},
			\label{eq: microlocalisation_varPhi_1} 
			\\ 
			&& 
			(0, \eta_1 \rd y^{1}) \notin \ES{\varPhi \varPsi - I} 
			\Leftrightarrow (0, \eta_1 \rd y^{1}) \notin \ES{\varPsi \varPhi - I}.
			\label{eq: microlocalisation_varPhi_2} 
		\end{eqnarray}
	\end{subequations}
	Note,~\eqref{eq: microlocalisation_varPhi_1},~\eqref{eq: microlocalisation_varPsi_2} and~\eqref{eq: microlocalisation_varPsi_3} imply that 
	\begin{subequations}
		\begin{eqnarray*}
			&& (0, \eta_1 \rd y^{1}) \notin \ES{\varPhi^{*} \varPsi^{*} \varPsi \varPhi - \varPhi^{*} B^{*} B \varPhi}, 
			\\ 
			&& (0, \eta_1 \rd y^{1}) \notin \ES{\varPhi [\varPsi, D_{1}] \varPhi}.   
		\end{eqnarray*}
	\end{subequations}
	This, accounting~\eqref{eq: microlocalisation_varPhi_2} entails that 
	\begin{subequations}
		\begin{eqnarray*}
			&& (0, \eta_1 \rd y^{1}) \notin \ES{I - (B \varPhi)^{*} B \varPhi}, 
			\\ 
			&& (0, \eta_1 \rd y^{1}) \notin \ES{[\varPhi, D_{1}]},  
		\end{eqnarray*}
	\end{subequations} 
	which completes the proof since $(x_{0}, \xi^{0}; 0, \eta_1 \rd y^{1}) \notin \WFPrime{B [D_{1}, \varPhi]}$ and therefore with $B^{\backprime} := B \varPhi$ we have $(x_{0}, \xi^{0}; 0, \eta_1 \rd y^{1}) \notin \WFPrime{P B^{\backprime} - B^{\backprime} D_{1}}$. 
	\newline 

	\noindent 
	\textbf{The case of connection $P$-compatibility:} 
	\\
	We assume that $\nabla^{E}$ is $P$-compatible. 
	Since the construction of the symbols of $B$ and $\tilde{B}$ are local, we can fix a local frame and local coordinates. 
	We will reduce the general situation to the case when the subprincipal symbol of $P$ vanishes near $\xxiNot$. 
	\begin{enumerate}
		\item[(i)] 
		\textit{The case of vanishing subprincipal symbol}: 
		We assume that $P$ is given in local coordinates with respect to some bundle frame and that in this frame $\subSymb{P} = 0$ near $\xxiNot$. 
		Let $\tilde P$ be a scalar operator with the same principal symbol as $P$ and with vanishing subprincipal symbol.  
		Using Weyl-quantisation, the operators $B_1$ and $\tilde{B}_1$ can be constructed in such a way that Egorov's theorem holds up to the subprincipal symbol level~\cite[Thm. 1]{Silva_PublMat_2007}. 
		We can choose these operators in this fashion that their principal symbols are constant $\one$ and that the subprincipal symbol of $\tilde{B}_1 \tilde P B_1$ (resp. $B_1 D_{1} \tilde{B}_1$) vanishes near $(0, \eta_1 \rd y^{1})$ (resp. $\xxiNot$). 
		We will now use the same construction as before starting with $B_1$ and $\tilde B_1$. Then the principal symbol $q$ of the remainder term $Q$ vanishes. The construction of $B_2$ and $\tilde{B}_2$ then yields operators with total symbols that are scalar and constant principal symbols equal to $\one$. 
		We conclude that, in case the subprincipal symbol of $P$ vanishes near $\xxiNot$, the operators $B$ and $\tilde{B}$ can be chosen as scalar operators with principal symbols that are constant along the flow lines of $X_P$ and $X_{D_1}$.
		\item[(ii)] 
		\textit{The case of non-vanishing subprincipal symbol}:
		We will microlocally transform $P$ to a scalar pseudodifferential operator $\tilde{P}$. 
		To be specific, for any properly supported $\tilde{P} \in \PsiDO{1}{M}$, we want to have a  $\hat{B} \in \PsiDO{0}{M; E}$ such that $\hat{B}$ is non-characteristic at $\xxiNot$ and $\xxiNot \notin \ES{P \hat{B} - \hat{B} \tilde{P} I}$. 
		We construct this operator locally and therefore fix a local frame for the bundle $E$ near the point $x_0$. 
		In the pullbacked bundle $\pi^* E \to \dot T^* M$, we can also construct a local frame that is parallel along $X_P$ with respect to $\nabla_{X_{P}}^{\pi^{*} E}$. 
		This local parallel frame is constructed by choosing a local transverse to $X_P$ and use the original frame on this transverse. 
		Parallel transport along the flow lines of $X_P$ then gives the desired frame. 
		The change of frame matrix from the original frame to the parallel frame is then a local section $b$ of $\Hom{\pi^{*} E, \pi^{*} E} \to \dotCoTanM$. 
		By construction, this frame is homogeneous of degree zero. 
		We now choose an elliptic zero-order pseudodifferential operator $\hat B$ whose principal symbol $\symb{\hat B}$ equals $b$ on $\cU$. 
		Let $\check{B}$ be a parametrix for $\hat B$. 
		The subprincipal symbol of $\check{B} P \hat B$ equals
		$
		- \ri X_{P} \symb{\hat{B}} + [\subSymb{P}, \symb{\hat{B}}] 
		$ 
		on $\cU$, using~\eqref{eq: symbol_commutator_PsiDO}. 
		By $P$-compatibility, this is exactly the formula for the connection $1$-form in the parallel local bundle frame and it therefore vanishes.
		By Proposition~\ref{prop: P_connection_transformation}, this is precisely the formula for the subprincipal symbol of $\check{B} P \hat{B}$ on $\cU$ and hence, $\tilde P \equiv \check{B} P \hat{B}$ has a vanishing subprincipal symbol on $\cU$. 

		This reduces the problem to the case of vanishing subprincipal symbol discussed earlier and let $B_3$ and $\tilde{B}_3$ are the corresponding scalar Fourier integral operators. 
		This means $B$ and $\tilde{B}$ are of the form $B := \hat{B} B_3$ and $\tilde{B} := \tilde{B}_3 \check{B}$. 
		Since the principal symbol of $\hat{B}$ is the transition function to a parallel frame and $B_3$ is a scalar operator, these imply that the principal symbols of $B$ and $\tilde{B}$ can be chosen $\one$ with respect to a parallel frame along $X_P$.  
	\end{enumerate}
	\noindent 
	\textbf{The case of selfadjoint $P$ with connection $P$-compatibility:} 
	\\
	Finally, suppose that $P$ is formally selfadjoint with respect to $(\cdot|\cdot)$ and that $\nabla^{E}$ is $P$-compatible. 
	The above construction of $\hat{B}$ can be repeated using an orthonormal frame of $E$ and another orthonormal frame of $\pi^{*} E$ that is parallel along $X_P$. 
	One obtains an operator that has principal symbol $\one$ near $\xxiNot$ with respect to the parallel orthonormal frame. 
	Now one repeats the construction of $\varPsi$ as before and note that the principal symbol can be kept as $\one$ in that way to turn $B$ into a microlocally unitary operator in the sense that $\tilde{B} = B^{*}$ with the desired properties.
\end{proof}
%
%
%

We will now show microlocalisation of $\square$ as a consequence of the preceding result by replacing the generic manifold $M$ by a Lorentzian (not necessarily globally hyperbolic) manifold $(M, g)$. 
In particular, the role of the characteristic point $\xxiNot$ of $P$ in Theorem~\ref{thm: microlocalisation_P}~\ref{con: p_x_xi_zero} will be played by any lightlike covector on $(M, g)$ and thus, the integral curves of $X_{P}$ on $\Char{P}$ are actually lightlike geodesics on the cotangent bundle (cf.~\eqref{eq: HVF_NHOp}). 

%
%
%
\begin{theorem} 
\label{thm: microlocalisation_NHOp}
	Let $E \to M$ be a smooth complex vector bundle of rank $N$ over an $n$-dimensional Lorentzian manifold $(M, g)$ and $\Box : \comSecE \to \comSecE$ a normally hyperbolic operator. 
	Denote by $\nabla^{E}$, the Weitzenb\"{o}ck connection associated with $\Box$ and by $\varGamma$, the graph of the homogeneous symplectomorphism from an open conic neighbourhood $\cU'$ of $(0, \eta_1 \rd y^{1})$ in $\dotCoTanRn$ to an open conic coordinate chart $\big( \cU, \xxi \big)$ centered at any lightlike covector $\xxiNot$ on $(M, g)$.
	Then, for any $m \in \R$, one can find properly supported Lagrangian distributions 
	$A \in I^{-1/2 + m} \big( M \times \Rn, \varGamma'; \Hom{\C^{N}, E} \big)$  
	and 
	$\tilde{A} \in I^{-1/2 - m} \big( \Rn \times M, \varGamma^{-1 \prime}; \Hom{E, \C^{N}} \big)$ so that $A \tilde{A}, \tilde{A} A$ both are pseudodifferential operators of order $-1$ and 
	\begin{subequations}
		\begin{align}
			& 
			\WFPrime{A} \subset \cU_{(x_{0}, \xi^{0}; 0, \eta_1 \rd y^{1})}, 
			\quad 
			& 
			\WFPrime{\tilde{A}} \subset \cU'_{(0, \eta_1 \rd y^{1}; x_{0}, \xi^{0})},
			\label{eq: microlocalisation_NHOp_1}
			\\ 
			& 
			\xxiNot \notin{\Char{A \tilde{A}} }, 
			\quad 
			& 
			(0, \eta_1 \rd y^{1}) \notin {\Char{\tilde{A} A} }, 
			\label{eq: microlocalisation_NHOp_2}
			\\ 
			& 
			(0, \eta_{1} \rd y^{1}) \notin \ES{\tilde{A} \square A - D_{1}}.      
			\label{eq: microlocal_conjugate_NHOp}
		\end{align}
	\end{subequations}
	Here, {$\Char{A \tilde{A}}$ is the characteristic set of $A \tilde{A}$, } $D_{1} = - \ri \one_{\matN} \partial / \partial y^{1} : C_{\mathrm{c}}^{\infty} \big( \Rn, \CN \big) \to C_{\mathrm{c}}^{\infty} \big( \Rn, \CN \big)$, and $\cU_{(x_{0}, \xi^{0}; 0, \eta_1 \rd y^{1})}$ resp. $\cU'_{(0, \eta_1 \rd y^{1}; x_{0}, \xi^{0})}$ are small conic neighbourhoods of $(x_{0}, \xi^{0}; 0, \eta_{1} \rd y^{1}) \in \dotCoTanM \times \dotCoTanRn$ resp. $(0, \eta_1 \rd y^{1}; x_{0}, \xi^{0}) \in \dotCoTanRn \times \dotCoTanM$. 

	In addition, if {$m = 0$ and} $E \to M$ is endowed with a sesquilinear form $(\cdot|\cdot)$ with respect to which $\square$ is formally selfadjoint then $A$ can be chosen as a scalar operator with respect to a unitary bundle frame that is parallel along the geodesic flow and microlocally $A = \tilde{A}^*$ if $\C^N$ is endowed with a standard sesquilinear form of the same signature as $(\cdot|\cdot)$. 
\end{theorem}
%
%
%
\begin{proof}
	The strategy is, as usual, to reduce $\square$ to a first-order operator so that  Theorem~\ref{thm: microlocalisation_P} can be applied. 
	To do so, choose a properly supported elliptic formally selfadjoint pseudodifferential operator $L$ on $E$ of order $-1/2$ having a homogeneous scalar principal symbol $l$. This operator is chosen such that
	near each point in $M$ there exists a local trivialisation of $E$ such that the subprincipal symbol of $L$ vanishes with respect to this trivialization. 
	Such an operator always exists, as it can be constructed locally and then patched to a global operator using a suitable partition of unity.  
	Then $L \square L$ is a first-order pseudodifferential operator whose principal symbol  $l^{2} \xxi \, g_{x}^{-1} (\xi, \xi) \, \one_{\End{E}}$ vanishes on $\xxiNot$. 
	By Theorem~\ref{thm: NHOp_compatible_connection} and Corollary~\ref{cor: L_NHOp_L_compatible_connection}, $\nabla^{E}$ is compatible with both $\square$ and $L\square L$, respectively. 
	Therefore the hypotheses of Theorem~\ref{thm: microlocalisation_P} are satisfied  
and we have
	\begin{equation} \label{eq: microlocal_conjugate_L_Box_L}
		{\xxiNot \notin \ES{B D_{1} \tilde{B} - L \square L}, 
		\quad 
		(0, \eta_{1} \rd y^{1}) \notin \ES{\tilde{B} (L \square L) B - D_{1}},}
	\end{equation}	
	{where the Lagrangian distributions $B$ and $\tilde{B}$ are constructed as in Theorem~\ref{thm: microlocalisation_P}.}  
	The conclusion entails by putting $A := LB$ and $\tilde{A} := \tilde{B} L$ where $\tilde{L}$ is a parametrix for $L$. 
	In case $\square$ is formally selfadjoint, we choose $L$ formally selfadjoint as well.  
\end{proof}
%
%
%

\begin{remark}
\label{rem: global_microlocalisation_NHOp} 
	We have microlocally conjugated $\square$ to $D_{1}$ around a lightlike covector in the preceding theorem~\eqref{eq: microlocal_conjugate_NHOp}. 
	It is also possible to refine the construction as follows.
	Let $I$ be a compact interval in $\R$ and $\gamma : I \to \dotCoTanM$ an integral curve of $X_{P}$:    
	\begin{equation}
		\symb{P} \circ \gamma = 0. 
	\end{equation} 
	If the composition of $\gamma$ and the projection $\dotCoTanM \to \dotCoTanM / \R_{+}$ is injective then one can find a conic neighbourhood $\cV'$ of $I \times \{(0, \eta_{1} \rd y^{1}) \}$ and a smooth homogeneous symplectomorphism $\varrho$ from $\cV'$ to an open conic neighbourhood $\varrho (\cV') \subset \dotCoTanM$ of $\gamma (I)$ such that~\cite[Prop. 26.1.6]{Hoermander_Springer_2009}  
	\begin{equation}
		\varrho \big( I \times \{ (0, \eta_{1} \rd y^{1}) \} \big) = \gamma (I), 
		\quad \varrho^{*} \symb{P} = \xi_{1} \one_{\End{E}}. 
	\end{equation}
	Imitating the proof of Theorem~\ref{thm: microlocalisation_P}, one can microlocalise $P$ to $D_{1}$ on $\gamma (I)$ 
	(see~\cite[Prop. 26.1.3$'$]{Hoermander_Springer_2009} for the scalar version). 

	As a consequence, on a Lorentzian manifold $(M, g)$, if $\Gamma$ is the graph of $\varrho$ then the proof of Theorem~\ref{thm: microlocalisation_NHOp} shows that, for any $m \in \R$, there exists Lagrangian distributions 
	$A \in I^{{-1/2} + m} \big( M \times \Rn, \Gamma'; \Hom{\C^{N}, E} \big)$  
	and 
	$\tilde{A} \in I^{{-1/2} - m} \big( \Rn \times M, \Gamma^{-1 \prime}; \Hom{E, \C^{N}} \big)$ so that $A \tilde{A}, \tilde{A} A$ both are pseudodifferential operators of order $-1$ and   
	\begin{subequations}
		\begin{align}
			& 
			\WFPrime{A} \subset \cV_{\Gamma}, 
			&  
			\WFPrime{\tilde{A}} \subset \cV'_{\Gamma^{-1}},
			\label{eq: microlocalisation_NHOp_1_global}
			\\ 
			& 
			\gamma (I) \cap {\Char{A \tilde{A}}} = \emptyset, 
			&
			(I \times \{ (0, \eta_{1} \rd y^{1}) \}) \cap {\Char{\tilde{A} A}} = \emptyset, 
			\label{eq: microlocalisation_NHOp_2_global}
			\\ 
			& 
			{(I \times \{ (0, \eta_{1} \rd y^{1}) \}) \cap \ES{\tilde{A} \square A - D_{1}}} = \emptyset,      
			\label{eq: microlocal_conjugate_NHOp_global}
		\end{align}
	\end{subequations}
	where $\cV_{\Gamma}, \cV'_{\Gamma^{-1}}$ are small conic neighbourhoods of $\Gamma$ restricted to $\gamma (I)$ and its inverse, respectively. 
\end{remark}
%
%
%

We close this section by a simple application of Theorem~\ref{thm: microlocalisation_P} to derive a bundle version of 
H\"{o}rmander's propagation of singularity theorem~\cite{Hoermander_Nice_1970},~\cite[Thm. 6.1.1']{Duistermaat_ActaMath_1972}, 
as for instance, stated in  
Taylor~\cite[Thm. 4.1, p. 135]{Taylor_PUP_1981} (for Sobolev wavefront sets)
and
Dencker~\cite[Thm. 4.2]{Dencker_JFA_1982} (for the polarisation set).  
Since $\secE = \bigcap_{s \in \R} H_{\mathrm{loc}}^{s} (M; E)$, such a refinement of the usual notion of (smooth) wavefront set is captured by the Sobolev wavefront set. 
For any $s \in \R$, the Sobolev wavefront set $\mathrm{WF}^{s} (u)$ of a distribution $u \in \cD' (M; E)$ relative to Sobolev space $H_{\mathrm{loc}}^{s} (M; E)$ is defined by~\cite[p. 201]{Duistermaat_ActaMath_1972}  
\begin{equation*} \label{eq: def_Sobolev_WF}
	\mathrm{WF}^{s} (u) 
	:= 
	\bigcap_{\substack{\Psi \in \PsiDO{0}{M; E} \\ \Psi u \in H_{\mathrm{loc}}^{s} (M; E)}} \Char{\Psi} 
	= 
	\bigcap_{\substack{\Psi \in \PsiDO{s}{M; E} \\ \Psi u \in L_{\mathrm{loc}}^{2} (M; E)}}   \Char{\Psi},  
\end{equation*}
where $\Char{\Psi}$ is the characteristic set of $\Psi$ and the intersection is over all properly supported $\Psi$. 
Locally this means, for any open subset $U$ of $\Rn$, $\dot{T}^{*} U \ni \xxiNot \notin \mathrm{WF}^{s} (u)$ is not in the Sobolev wavefront set of a distribution $u \in \mathcal{D}' (U; E)$ if and only if there exists a compactly supported section $f$ on $U$ non-vanishing at $x_{0} \in U$ such that the weighted Fourier transform $(1 + | \xi |^{2})^{s} \, |\widehat{fu}(\xi)|^{2}$ is integrable in a conic neighbourhood of $ \xi^{0}$. 
An accumulation of important properties of this finer wavefront set is available, for example, in~\cite[App. B]{Junker_AHP_2002}. 
A propagation of $H^{s}$-regularity is now presented below. 
The result is similar to that of Taylor but formulated in a geometric fashion and proved in a different way employing microlocalisation on vector bundles developed in Theorem~\ref{thm: microlocalisation_P}.    

%
%
%
\begin{theorem}[Propagation of Sobolev regularity] 
\label{thm: propagation_singularity_Sobolev_WF}
	Let $E \to M$ be a smooth complex vector bundle over a manifold $M$ and $u \in \cD' (M; E)$. 
	Suppose that, for some $m \in \R$, a pseudodifferential operator $\Phi \in \PsiDO{m}{M; E}$ on $E$ of order $m$ satisfies the hypotheses~\ref{con: p_x_xi_zero} and~\ref{con: HVF_radial} of Theorem~\ref{thm: microlocalisation_P} and that $\mathcal{I}$ be an interval on an integral curve (on cotangent bundle) of the Hamiltonian vector field generated by the principal symbol of $\Phi$ such that $\mathcal{I} \cap \mathrm{WF}^{s} (\Phi u) = \emptyset$. 
	Then either $\mathcal{I} \cap \mathrm{WF}^{s + m -1} (u) = \emptyset$ or $\mathcal{I} \subset \mathrm{WF}^{s + m -1} (u)$. 
\end{theorem}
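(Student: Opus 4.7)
The plan is to reduce the statement via Proposition~\ref{prop: microlocalisation_P} to the analogous result for the model operator $D_{1}=-\ri\,\one_{\matN}\partial/\partial y^{1}$ on $\Rn$ and then conclude by a connectedness (clopen) argument on $\mathcal{I}$. For any $(x,\xi)\in\mathcal{I}$, Proposition~\ref{prop: microlocalisation_P} applied with the order parameter chosen equal to $0$ provides properly supported elliptic order-$0$ Lagrangian distributions $B$ and $\tilde{B}$ associated with the graph of the straightening symplectomorphism $\varkappa$, with $\tilde{B}\Phi B\equiv D_{1}$, $B\tilde{B}\equiv I_{E}$, and $\tilde{B}B\equiv I$ microlocally near the relevant points. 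As elliptic order-$0$ Fourier integral operators associated with a local canonical graph, $B$ and $\tilde{B}$ intertwine Sobolev wavefront sets through $\varkappa$: setting $v:=\tilde{B}u$ and using~\eqref{eq: def_Sobolev_WF},
\begin{equation*}
(x,\xi)\in\mathrm{WF}^{r}(u)\iff \varkappa^{-1}(x,\xi)\in\mathrm{WF}^{r}(v),\qquad r\in\R,
\end{equation*}
with the analogous equivalence between $\Phi u$ and $D_{1}v\equiv\tilde{B}\Phi u$ modulo $C^{\infty}$. Covering the compact $\mathcal{I}$ by finitely many such charts reduces the theorem to its counterpart for $D_{1}$, where the shift $m-1$ of the theorem degenerates to zero since $D_{1}$ is first-order.

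Next I would prove, for the model: if $D_{1}v=f$ and $\mathcal{J}\subset\dotCoTanRn$ is a segment of an integral curve of $X_{D_{1}}=\partial/\partial y^{1}$ with $\mathcal{J}\cap\mathrm{WF}^{s}(f)=\emptyset$, then $\mathcal{J}\cap\mathrm{WF}^{s}(v)$ is both open and closed in $\mathcal{J}$. Closedness is automatic from the closedness of the Sobolev wavefront set in $\dotCoTanRn$. For openness one combines the explicit solution $v(y^{1},y')=v(y^{1}_{0},y')+\ri\int_{y^{1}_{0}}^{y^{1}}f(t,y')\,\rd t$ with a H\"ormander positive-commutator energy estimate of the form
\begin{equation*}
\|\Psi v\|_{0}\le C\bigl(\|\Psi_{0}v\|_{0}+\|\Psi'f\|_{0}\bigr),
\end{equation*}
where $\Psi,\Psi_{0}\in\PsiDO{s}{\Rn,\matN}$ microlocalise at the target and the starting point of $\mathcal{J}$ respectively, and $\Psi'$ has microsupport in a slightly larger conic neighbourhood of $\mathcal{J}$. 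This converts microlocal $H^{s}$-regularity of $v$ at one point of $\mathcal{J}$, together with the assumed regularity of $f$ along $\mathcal{J}$, into the same regularity at all sufficiently close points, which is the required openness.

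Transporting this back to $M$ through $\varkappa$ and patching across the finitely many charts obtained in the reduction step, the set $\mathcal{I}_{0}:=\mathcal{I}\cap\mathrm{WF}^{s+m-1}(u)$ is simultaneously open and closed in the connected interval $\mathcal{I}$, so $\mathcal{I}_{0}\in\{\emptyset,\mathcal{I}\}$, which is the claimed dichotomy. The main analytic obstacle is the positive-commutator estimate: one must choose $\Psi,\Psi_{0},\Psi'$ so that their microsupports stay inside the region where $f\in H^{s}$ as the base point moves along $\mathcal{J}$ and, simultaneously, so that the principal symbol of $[\Psi,D_{1}]$ has the correct sign to close the estimate. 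This is H\"ormander's standard positive-commutator construction and constitutes the technical heart of the argument; the remaining ingredients are routine microlocal bookkeeping and the standard mapping properties of order-$0$ Fourier integral operators.
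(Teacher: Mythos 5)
Your reduction to the model operator $D_{1}$ via Proposition~\ref{prop: microlocalisation_P} matches the paper, but your treatment of the model is genuinely different. The paper normalises the Sobolev exponent to zero by choosing the conjugating operator $B$ to have order $-s$, and then reads off the dichotomy directly from the explicit advanced and retarded fundamental solutions of $D_{1}$: these are convolutions by compactly supported measures $\pm\ri\,\Theta(\pm(y^{1}-z^{1}))\,\delta(y'-z')$, which map $L^{2}_{\mathrm{c}}$ to $L^{2}_{\mathrm{loc}}$, and $v-F^{\ret}f$ is annihilated by $D_{1}$, hence independent of $y^{1}$, so its microlocal $L^{2}$-regularity is constant along flow lines. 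You instead keep $s$ general and run a clopen argument, proposing to establish openness via a positive-commutator estimate. This is viable --- the symbol of $[\Psi,D_{1}]$ is $-\ri\,\partial_{y^{1}}\psi$, and one chooses $\psi$ monotone along the flow --- and it is the general-purpose propagation technique, so it would generalise more readily; but it is considerably heavier than the situation requires, and the explicit one-variable integral formula you yourself quote already supplies the estimate once the integral term is controlled by the $L^{2}$ mapping property, which is exactly the shortcut the paper uses. One small omission worth repairing: Proposition~\ref{prop: microlocalisation_P} applies to first-order $P$, whereas $\Phi$ has order $m$; one must first compose $\Phi$ with a properly supported elliptic pseudodifferential operator of order $1-m$ (this is also where the shift $m-1$ in the Sobolev exponent originates), and you treat this reduction only implicitly.
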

%
%
%

\begin{proof}
    Once Theorem~\ref{thm: microlocalisation_P} is at our disposal, the rest of the proof is the same as its scalar version~\cite[Thm. 6.1.1']{Duistermaat_ActaMath_1972}  
	(see also~\cite[Thm. 26.1.4]{Hoermander_Springer_2009}). 	
	For completeness we give the details here. 
	By conjugating $\Phi$ with appropriate pseudodifferential operators we reduce the statement to the case $m = 1$. 
	In other words, one can just consider $P$ in Theorem~\ref{thm: microlocalisation_P} instead of $\Phi$ to conclude the assertion by utilising the Sobolev continuity properties of pseudodifferential 
	(see e.g.~\cite[p. 92]{Hoermander_Springer_2007}) 
	and Fourier integral operators 
	(see e.g.~\cite[Cor. 25.3.2]{Hoermander_Springer_2009}). 
	Via microlocalisation (Theorem~\ref{thm: microlocalisation_P}), the analysis further boils down to $P = D_{1}, s = 0$ and $\xxiNot = (0, \eta_1 \rd y^{1})$ by choosing the order of the Fourier integral operator $B$ in Theorem~\ref{thm: microlocalisation_P} equals to $-s$. 
	Then, the claim follows from the form (see~\eqref{eq: adv_ret_fundamental_sol_partial_derivative}) of the advanced and retarded fundamental solutions of $D_{1}$ and these map from $L_{\mathrm{c}}^{2} (\Rn, \CN)$ to $L_{\mathrm{loc}}^{2} (\Rn, \CN)$. 
\end{proof}
%
%
%
%
%
%
%
%
%
%
\subsection{Proof of Theorem~\ref{thm: exist_unique_Feynman_parametrix_NHOp}}
We will make use of the strategy used by 
Duistermaat-H\"{o}rmander~\cite[Thm. 6.5.3]{Duistermaat_ActaMath_1972} 
(see also~\cite[Thm. 26.1.14]{Hoermander_Springer_2009}) 
for a scalar pseudodifferential operators of real principal type and simplify their arguments by exploiting the global hyperbolicity of spacetime $(M, g)$. 
To begin with, we show that the Feynman parametrix is unique, if it exists. 
%
%
%
%
%
%
%
%
%
%
\subsubsection{Uniqueness} 
\label{sec: uniqueness_Feynman_parametrix}
Suppose that $\leftParametrix$ (resp. $\rightParametrix$) is a left (resp. right) Feynman parametrix of $\Box$, i.e.,  
the off-diagonal contribution of $\WFPrime{\leftParametrix, \rightParametrix}$ is given by $ C^{+}$ (defined in Definition~\ref{def: Feynman_parametrix}). 
To prove that $\leftParametrix - \rightParametrix$ is a smoothing operator, we would like to argue that $\leftParametrix \Box \rightParametrix$ is congruent both to $\leftParametrix$ and to $\rightParametrix$ modulo smoothing operators. 
But $\leftParametrix$ and $\rightParametrix$ are not properly supported which makes this a non-trivial task. 
To circumvent this difficulty, one employs the fact that $\leftParametrix H  \rightParametrix$ is defined when $H$ is a pseudodifferential operator having Schwartz kernel of compact support in $M \times M$ and then $H : \dualComSecE \to \dualSecE$. 

If $(x, \xi; y, \eta) \in \WFPrime{\leftParametrix H \rightParametrix}$ but $(x, \xi), (y, \eta)$ are in the complement of $\ES{H}$, then $(x, \xi; z, \zeta), (z, \zeta; y, \eta) \in C^{+}$ for some $(z, \zeta) \in \ES{H}$. 
This follows from the behaviour of wavefront sets under composition of kernels, as available, for instance 
in~\cite[Theorem 8.2.10]{Hoermander_Springer_2003},
and the fact that $\WFPrime{H} \subset \varDelta \dotCoTanM$. 
This entails that $(x, \xi), (y, \eta)$, and $(z, \zeta)$ are on the same lightlike geodesic (strip) $\gamma (s)$ on $\dotCoTanM$ with $(z, \zeta)$ in between the other two points. 

Since $(M, g)$ is assumed to be globally hyperbolic, $J (K)$ is compact for any compact $K \subset M$. 
Therefore, if the projections of endpoints of $\gamma$ on $M$ are in $K$ then the projection $c (s)$ of $\gamma (s)$ on $M$ stays over $J (K)$, as shown in Figure~\ref{fig: uniqueness_Feynman_parametrix}.  
Consequently, if $\singsupp{H} \cap J (K) = \emptyset$ then $\WFPrime{\leftParametrix H \rightParametrix}$ cannot have any point over $K \times K$. 
Let $\chi \in C_{\mathrm{c}}^{\infty} (M; E)$ such that $\chi$ is identically $1$ on $J (K)$ and define $\mathrm{M}_{\chi}$ be the corresponding multiplication operator. 
We observe that $[\mathrm{M}_{\chi}, \square]$ vanishes identically on $J (K)$. 
Thus, $\WFPrime{\leftParametrix [\mathrm{M}_{\chi}, \square] \rightParametrix}$ contains no  point over $K \times K$ and so it is true for 
\begin{eqnarray}
	(\dot{T}^{*} K \times \dot{T}^{*} K) \cap \WFPrime{\leftParametrix - \rightParametrix}
	& = & 
	(\dot{T}^{*} K \times \dot{T}^{*} K) \cap \WFPrime{\leftParametrix \mathrm{M}_{\chi} \square \rightParametrix - \leftParametrix \square \mathrm{M}_{\chi} \rightParametrix} 
	\nonumber \\ 
	& = &    
	(\dot{T}^{*} K \times \dot{T}^{*} K) \cap \WFPrime{\leftParametrix [\mathrm{M}_{\chi}, \square] \rightParametrix} 
	\nonumber \\ 
	& = & 
	\emptyset. 
\end{eqnarray}
Since $K$ is arbitrary, we conclude that $\leftParametrix - \rightParametrix$ is a smoothing operator and similarly for the anti-Feynman parametrix.  

%
%
%
\begin{center}
	\begin{tikzpicture}
		\filldraw[color = cyan, fill = cyan!5!white] (0,0) -- (2,2) -- (4,0) -- (2,-2) -- (0,0); 
		\draw[blue, thick] (0,0) -- (4,0); 
		\draw[cyan, thick] (0,0) .. controls (2,1.5) .. (4,0); 
		\draw[cyan, thick] (0,0) .. controls (2,-1.5) .. (4,0);
		\node[left] at (0,0) {$\textcolor{blue}{K}$}; 
		\node[left] at (1.85,1.75) {$\textcolor{cyan}{J (K)}$}; 
		\node at (2,0.75) {\textcolor{cyan}{$c (s)$}};
	\end{tikzpicture}
	\hfil
	\begin{tikzpicture}
		\draw[cyan, ultra thick] (1,0) -- (4,0); 
		\draw[red, ultra thick] (0,0) -- (1,0); 
		\draw[red, ultra thick] (4,0) -- (5,0); 
		\draw[thick] (1,2) -- (4,2);
		\draw[thick] (1,2) to [out=190, in=20] (0,0); 
		\draw[thick] (4,2) to [out=355, in=160] (5,0); 
		\draw[->] (0,0) -- (0, 2.5);
		\node[above] at (2.5, 2) {$\chi$};
		\node[below] at (2.5,0) {$\textcolor{cyan}{c (s)}$}; 
		\node[left] at (0,2) {$1$};
	\end{tikzpicture}
	\captionof{figure}{A 
		schematic visualisation of a consequence of global hyperbolicity of spacetime $(M, g)$ to entail the uniqueness of Feynman parametrices. 
		Here, $c$ is a lightlike geodesic on $M$ and time flows from left to right.
	}
    \label{fig: uniqueness_Feynman_parametrix}	
\end{center}
%
%
%
\subsubsection{Construction} 
Notice that $\square \rightParametrix = I + R$ is equivalent to $(\rightParametrix)^{*} \square^{*} = I + R^{*}$ and, since the adjoint $\square^{*}$ of $\square$ is also a normally hyperbolic operator, the existence of left parametrices with the listed properties in this theorem follows from the existence of right parametrices for $\square$. 
It is, therefore, sufficient to construct a right parametrix for $\square$ with the required regularities. 

To begin with, let us denote $\Rn \ni x = (x^{1}, x') \in \R \times \R^{n-1}$ and note that the Schwartz kernels of the advanced and retarded fundamental solutions 
\begin{eqnarray} 
	&& 
	F_{1}^{\ret, \adv} ={\pm} \ri \Theta \big( \pm (y^{1} - z^{1}) \big) \otimes \delta (y' - z'), 
	\label{eq: adv_ret_fundamental_sol_partial_derivative} 
	\\ 
	&&  
	F_{1} := F_{1}^{\ret} - F_{1}^{\adv} = \ri \delta (y' - z') 
	= \frac{\ri}{(2 \pi)^{n-1}} \int_{\R^{n-1}} \re^{\ri (y' - z') \theta'} \rd \theta' 
	\label{eq: causal_fundamental_sol_partial_derivative} 
\end{eqnarray}
of $D_{1}$ satisfy~\cite[Prop. 6.1.2]{Duistermaat_ActaMath_1972}  
(see also~\cite[Prop. 26.1.2]{Hoermander_Springer_2009})
\begin{eqnarray} 
	&& 
	\WFPrime {F_{1}^{\adv, \ret}}  
	= 
	\varDelta \, \dot{T}^{*} \Rn \bigcup C_{1}^{\adv, \ret},  
	\label{eq: Hoermander_Prop_26_1_2_i}
	\\ 
	&&	
	C_{1}^{\adv, \ret} := \{ \xxiyeta \in C_{1} \, | \, x^{1} \lessgtr y^{1} \},  
	\\ 
	&& 
	C_{1} := \{ \xxiyeta \in T^{*} \Rn \times T^{*} \Rn \, | \, x' = y', \xi' = \eta' \neq 0, \xi_{1} = 0 = \eta_{1} \}, 
	\\ 
	&& 
	F_{1}, \chi F_{1}^{\adv, \ret} \in I^{- 1/2} ( \Rn \times \Rn, C'_{1}),  
	\label{eq: Hoermander_Prop_26_1_2_ii} 
\end{eqnarray}
where $\Theta$ is the Heaviside step function and $\chi$ is a smooth function on $\Rn \times \Rn$ vanishing near the diagonal. 
As before, the integral~\eqref{eq: causal_fundamental_sol_partial_derivative} is an oscillatory integral that needs to be understood in the sense of a distribution.

We are now going to present a bundle version of the following result by 
Duistermaat-H\"{o}rmander~\cite[Lem. 6.5.4]{Duistermaat_ActaMath_1972}  
(see also~\cite[Lem. 26.1.15]{Hoermander_Springer_2009}). 
The proof simply carries over to the setting of bundles yet we include the details here for completeness. 

%
%
%
\begin{lemma} \label{lem: Lem_26_1_15_Hoermander}
	We use the set up and terminology as in Theorem~\ref{thm: microlocalisation_NHOp}: $\square$ and $D_{1}$ are microlocally conjugate to each other, $A, \tilde{A}$ are the conjugating Lagrangian distributions with $m = 0$, and $\cU$ is a conic neighbourhood of any lightlike covector $\xxiNot$ in $T^*M$. 
	Suppose that $F_{1}^{\ret, \adv}$ are the Schwartz kernels of the retarded and advanced fundamental solutions of $D_{1}$ and that $\chi \in C_{\mathrm{c}}^{\infty} (\Rn \times \Rn, \R)$ is identically $1$ in a neighbourhood of the diagonal and vanishes outside another sufficiently small neighbourhood of the diagonal.  
	If $T \in \PsiDO{0}{M; E}$ with $\ES T \subset \cU$ and $F^{\pm} := A (\chi F_{1}^{\ret, \adv}) \tilde{A} T$, then  
	\begin{subequations}
		\begin{eqnarray}
			&& 
			\WFPrime{F^{\pm}} \subset \varDelta \dotCoTanM \cup C^{\pm}, 
			\label{eq: Hoermander_Lem_26_1_15_i}
			\\ 
			&& 
			\square F^{\pm} = T + R^{\pm}, 
			\quad 
			R^{\pm} \in I^{-1/2} \big( M \times M, C^{\pm \prime}; \Hom{E, E} \big), 
			\label{eq: Hoermander_Lem_26_1_15_ii}
			\\ 
			&& 
			F^{+} - F^{-} \in I^{-3/2} \big( M \times M, C'; \Hom{E, E} \big).   
			\label{eq: Hoermander_Lem_26_1_15_iii} 
		\end{eqnarray}
	\end{subequations}
\end{lemma}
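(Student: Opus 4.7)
The overall plan is to transfer the explicit structure of the advanced/retarded fundamental solutions of the model operator $D_{1}$---given in~\eqref{eq: adv_ret_fundamental_sol_partial_derivative}--\eqref{eq: causal_fundamental_sol_partial_derivative} together with their wavefront information~\eqref{eq: Hoermander_Prop_26_1_2_i}--\eqref{eq: Hoermander_Prop_26_1_2_ii}---over to $\square$ via the microlocal conjugation of Proposition~\ref{prop: microlocalisation_NHOp}. The essential geometric input is that the homogeneous symplectomorphism $\varkappa$ underlying $\Gamma$ satisfies $\varkappa^{*} \symb{\square} = \xi_{1} \one$ and therefore sends the null bicharacteristic relation $(C_{D_{1}})^{\pm}$ of $\xi_{1}$ (the skeleton underlying $\WFPrime{F^{\ret, \adv}}$) onto the relation $C^{\pm}$ locally near $\xxiNot$, while fixing the diagonal.

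For~\eqref{eq: Hoermander_Lem_26_1_15_i}, I would compute $\WFPrime{\tilde{F}^{\pm}}$ by applying the composition calculus of wavefront sets to $\tilde{F}^{\pm} = A F^{\pm} \tilde{A} T$. Multiplication by the smooth cutoff $\chi$ preserves the wavefront set, so~\eqref{eq: Hoermander_Prop_26_1_2_i} gives $\WFPrime{F^{\pm}} \subset \varDelta^{*}_{\Rn} \cup (C_{D_{1}})^{\pm}$. Since~\eqref{eq: microlocalisation_NHOp_1} constrains $\WFPrime{A} \subset \Gamma'$ and $\WFPrime{\tilde{A}} \subset (\Gamma^{-1})'$, while the pseudodifferential character of $T$ yields $\WFPrime{T} \subset \varDelta^{*}$, the iterated composition lands in $\Gamma \circ ( \varDelta^{*}_{\Rn} \cup (C_{D_{1}})^{\pm} ) \circ \Gamma^{-1} \circ \varDelta^{*} \subset \varDelta^{*} \cup C^{\pm}$ by the mapping properties of $\varkappa$, with all compositions well defined because $A$ and $\tilde{A}$ are properly supported.

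For~\eqref{eq: Hoermander_Lem_26_1_15_ii}, I begin with the exact identity $D_{1} F^{\ret, \adv} = I$ on $\Rn$ and commute $D_{1}$ past $\chi$ to obtain
\[
D_{1} F^{\pm} \;=\; \chi \, I + [D_{1}, \chi] F^{\ret, \adv} \;=\; I + K^{\pm},
\qquad
K^{\pm} := -\ri \,(\partial_{y^{1}} \chi)\, F^{\ret, \adv} ,
\]
and since $\chi \equiv 1$ near the diagonal, the factor $\partial_{y^{1}} \chi$ vanishes there, so writing $K^{\pm} = (\partial_{y^{1}} \chi)\,\vartheta F^{\ret, \adv}$ for a suitable smooth $\vartheta$ supported away from the diagonal, the class~\eqref{eq: Hoermander_Prop_26_1_2_ii} places $K^{\pm} \in I^{-1/2}\big( \Rn \times \Rn, ((C_{D_{1}})^{\pm})' \big)$. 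Using the microlocal identities $\xxiNot \notin \ES{\square - A D_{1} \tilde{A}}$ and $(0, \eta_{1} \rd y^{1}) \notin \ES{\tilde{A} A - I}$ from~\eqref{eq: microlocalisation_NHOp_2}--\eqref{eq: microlocal_conjugate_NHOp}, together with $\ES{T} \subset \cU$, I would then rewrite
\[
\square \tilde{F}^{\pm} \;=\; \square A F^{\pm} \tilde{A} T \;\equiv\; A D_{1} F^{\pm} \tilde{A} T \;=\; A(I + K^{\pm}) \tilde{A} T \;\equiv\; T + A K^{\pm} \tilde{A} T
\]
modulo smoothing, the error at each step having essential support disjoint from $\cU$ and therefore annihilated upon post-composition with $T$. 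An order count for the clean composition of Lagrangian distributions of orders $-1/2, -1/2, -1/2, 0$ with excess zero (using that $\Gamma$ is a local canonical graph) yields $R^{\pm} := A K^{\pm} \tilde{A} T \in I^{-3/2}\big( M \times M, (C^{\pm})'; \End{E} \big)$, whose canonical relation is $\Gamma \circ (C_{D_{1}})^{\pm} \circ \Gamma^{-1} \circ \varDelta^{*} = C^{\pm}$ under $\varkappa$.

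Assertion~\eqref{eq: Hoermander_Lem_26_1_15_iii} will then follow by applying the same order count to $\tilde{F}^{+} - \tilde{F}^{-} = A \chi F \tilde{A} T$, noting that $F = F^{\ret} - F^{\adv}$ is a Lagrangian distribution of order $-1/2$ on the \emph{full} bicharacteristic relation of $D_{1}$ by~\eqref{eq: Hoermander_Prop_26_1_2_ii}, whose image under $\varkappa$ is exactly $C$. The main obstacle that I anticipate is the bookkeeping of essential supports: verifying that each residual commutator produced by the approximate identities~\eqref{eq: microlocalisation_NHOp_2}--\eqref{eq: microlocal_conjugate_NHOp} really is annihilated upon composition with $T$, and that the iterated Lagrangian compositions are clean with the correct excess so that the final order is exactly $-3/2$ on exactly the relation $(C^{\pm})'$. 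Both reduce to choosing $\cU$ sufficiently small about $\xxiNot$ and exploiting the local canonical graph character of $\Gamma$.
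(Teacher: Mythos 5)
Your proposal follows essentially the same route as the paper's proof: start from $D_1 F^{\pm}=I+K^{\pm}$ (your $K^{\pm}$ is the paper's $\tilde R^{\pm}=D_1(\chi)F^{\ret,\adv}$), push it through $A,\tilde A$, and use the microlocal conjugation identities to absorb errors into smoothing terms; parts (i) and (iii) follow from composition of wavefront sets and an order count exactly as you say. The one point that deserves more care is why $(\square A - A D_1)F^{\pm}\tilde A T$ is smoothing. You write that the error has ``essential support disjoint from $\cU$ and therefore annihilated upon post-composition with $T$,'' but this is not by itself sufficient: the operator $\square A - AD_1$ is only known to be negligible in a conic neighbourhood of $(0,\eta_1\rd y^1)$, while $F^{\pm}$ propagates wavefront sets along the $y^1$-direction, so the wavefront set of $F^{\pm}\tilde A T u$ need not stay near $(0,\eta_1\rd y^1)$ just because $\ES{T}$ is small. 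The paper resolves this by observing that $\WFPrime{F^{\pm}}$ can be brought arbitrarily close to the diagonal by shrinking $\supp{\chi}$: fix a cone $\cV$ around $(0,\eta_1\rd y^1)$ on which $(\square A - AD_1)$ is negligible, then choose $\chi$ and a smaller cone $\cW$ so that $\WF{F^{\pm}v}\subset\cV$ whenever $\WF{v}\subset\cW$, and finally require $\ES{T}\subset\varkappa(\cW)$. You correctly identify essential-support bookkeeping as the main obstacle and point to shrinking $\cU$, but the concurrent shrinking of $\supp{\chi}$ is an essential extra degree of freedom and should be made explicit to close the argument.
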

%
%
%
\begin{proof}
	The properties~\eqref{eq: Hoermander_Prop_26_1_2_i} and~\eqref{eq: Hoermander_Prop_26_1_2_ii} immediately entail~\eqref{eq: Hoermander_Lem_26_1_15_i} and~\eqref{eq: Hoermander_Lem_26_1_15_iii}. 
	To show~\eqref{eq: Hoermander_Lem_26_1_15_ii}, the definition of $F^{\pm}$ implies   
	\begin{equation} \label{eq: Hoermander_26_1_11}
		\square F^{\pm} 
		= (\square A -{\tilde{L} B} D_{1}) (\chi F_{1}^{\ret, \adv}) \tilde{A} T + {\tilde{L} B} D_{1} (\chi F_{1}^{\ret, \adv}) \tilde{A} T,  
	\end{equation} 
	{where $\tilde{L}$ and $B$ are those appearing in~\eqref{eq: microlocal_conjugate_L_Box_L}.} 
	Since $\square$ and $D_{1}$ are microlocal conjugates to one another,~\eqref{eq: microlocal_conjugate_NHOp} can be re-expressed as (cf.~\eqref{eq: microlocal_conjugate_L_Box_L})  
	\begin{equation} 
		(x_{0}, \xi^{0}; 0, \eta_{1} \rd y^{1}) \notin \WFPrime{\square A - {\tilde{L} B} D_{1}}. 
		\tag{\ref{eq: microlocal_conjugate_NHOp}$^{\prime}$}   
	\end{equation}
	Then, there is a conic neighbourhood $\cV$ of $(0, \eta_{1} \rd y^{1})$ such that $(\square A - {\tilde{L} B} D_{1}) v$ is smooth for any $v \in \cD' (\Rn, \CN)$ when $\WF{v} \subset \cV$. 
	Since $\WFPrime{\chi F_{1}^{\ret, \adv}}$ can be made arbitrary close to the diagonal in $\dotCoTanRn \times \dotCoTanRn$ by choosing the support of $\chi$ close to the diagonal in $\Rn \times \Rn$, we can pick an appropriate $\chi$ and a conic neighbourhood $\mathcal{W}$ of $(0, \eta_{1} \rd y^{1})$ such that $\mathrm{WF} \big( (\chi F_{1}^{\ret, \adv}) v \big) \subset \cV$ provided $\WF{v} \subset \mathcal{W}$. 
	If $\ES{T} \subset \varkappa (\mathcal{W})$ then the first term in the right-hand side of~\eqref{eq: Hoermander_26_1_11} is smooth and we are left with the last term of that  equation. 
	One computes $D_{1} (\chi F_{1}^{\ret, \adv}) = I + \tilde{R}^{\pm}$ where 
	$\tilde{R}^{\pm} := D_{x^{1}} \big(\chi (x, y) \big) \, F_{1}^{\adv, \ret} 
	\in 
	I^{- 1/2} \big( \Rn \times \Rn, C'_{1}; \matN \big)$. 
	Since {$\tilde{L} B \tilde{A} T - T = (\tilde{L} B \tilde{B} L - I) T$} is smooth as long as $\ES{T}$ is sufficiently close to $\xxiNot$, it follows that $\square F^{\pm} - T = R^{\pm}$ where $R^{\pm} - {\tilde{L} B} \tilde{R}^{\pm} \tilde{A} T$  is smooth, which concludes the proof. 
\end{proof} 
%
%
%

To end the proof of Theorem~\ref{thm: exist_unique_Feynman_parametrix_NHOp}, we choose a locally finite covering $\{ \sU_{\alpha} \}$ of $\dotCoTanM$ by open cones $\sU_{\alpha}$ such that Lemma~\ref{lem: Lem_26_1_15_Hoermander} is applicable when $\ES{T} \subset \sU_{\alpha}$. 
Denoting the projection of $\sU_{\alpha}$ on $M$ by $U_{\alpha}$ and picking  $\sU_{\alpha}$ so that $U_{\alpha}$ are also locally finite, we set 
\begin{equation}
	I = \sum_{\alpha} T_{\alpha}, 
	\quad 
	\ES{T_{\alpha}} \subset \sU_{\alpha}, 
	\quad 
	\supp{T_{\alpha}} \subset U_{\alpha} \times U_{\alpha}. 
\end{equation}
Then, for every $\alpha$, $A (\chi F_{1}^{\ret, \adv}) \tilde{A} T_{\alpha}$ can be chosen in accord with the Lemma~\ref{lem: Lem_26_1_15_Hoermander} so that $\mathrm{supp} \big( A (\chi F_{1}^{\ret, \adv}) \tilde{A} T_{\alpha} \big) \subset U_{\alpha} \times U_{\alpha}$, and hence the sum 
\begin{equation} 
	F^{\pm} := \sum_{\alpha} A (\chi F_{1}^{\ret, \adv}) \tilde{A} T_{\alpha} 
\end{equation}
is well-defined which satisfies the claimed properties~\eqref{eq: def_Feynman_parametrix} and~\eqref{eq: diff_Feyn_anti_Feyn_parametrix}.  
Note that, if $\chi$ is taken as a function of $y-z$ then $\chi F_{1}^{\ret, \adv}$ is a convolution by a measure of compact support and therefore it continuously maps $H^s_{\mathrm{c}}(\Rn, \CN)$ to $H^s_{\mathrm{c}}(\Rn, \CN)$.
By the mapping properties of pseudodifferential and Fourier integral operators, ~\cite[Thm. 4.3.1]{Hoermander_ActaMath_1971}   
(see also~\cite[Cor. 25.3.2]{Hoermander_Springer_2009}), $A, \tilde{A}, T_{\alpha}$ are 
continuous maps from $H^s_{\mathrm{c}}$ to $H^s_{\mathrm{c}}$. Since the sum of kernels is locally finite this shows that
$F^{\pm}$ continuously map $H_{\mathrm{c}}^{s} (M; E)$ into $H_{\mathrm{loc}}^{s} (M; E)$ for all $s \in \R$.  

Until now, we have just shown that $\square F^{\pm} = I + R^{\pm}$. 
By Lemma~\ref{lem: Hoermander_lem_26_1_16}, we can choose $G^{\pm} \in I^{-3/2} \big( M \times M, C^{\pm \prime}; \Hom{E, E} \big)$ so that $\square G^{\pm} - R^{\pm}$ is smooth. 
Moreover, $G^{\pm}$ extend to continuous mappings from $H_{\mathrm{c}}^{s} (M; E)$ to $H_{\mathrm{loc}}^{s} (M; E)$ for every $s \in \R$. 
This follows from mapping properties of Fourier integral operators, for example, Theorem 25.3.8 in~\cite{Hoermander_Springer_2009},  
bearing in mind that the corank of the symplectic form $\upsigma_{\ms \varGamma}$ (see Appendix~\ref{sec: FIO_symplecto_bundle}) on $\varGamma$ is two. 
Since locally a Fourier integral operator on a bundle is a matrix of scalar Fourier integral operators, this theorem can be applied here directly.
Therefore, 
\begin{equation} \label{eq: Feynman_parametrices_NHOp}
	G^{\Feyn, \aFeyn} := F^{\pm} - G^{\pm}
\end{equation}
is a right parametrix which has this continuity property. 

Furthermore, the construction shows that $F^{+} - F^{-}$ and thus $G^{\Feyn} - G^{\aFeyn}$ is non-characteristic on $\varDelta \, \coLightBun$, as can be seen from the integral representation~\eqref{eq: causal_fundamental_sol_partial_derivative} of $F_{1}$ and wavefront set properties of $A, \tilde{A}$.  
Since $\square (\FeynParametrix - \antiFeynParametrix)$ is smooth, it follows that its principal symbol satisfies a first-order homogeneous differential equation along the lightlike geodesic by Theorem~\ref{thm: Hoermander_thm_25_2_4} and~\eqref{eq: subprincipal_symbol_NHOp}.   
Therefore $G^{\Feyn} - G^{\aFeyn}$ is non-characteristic everywhere: $\mathrm{WF}' (G^{\Feyn} - G^{\aFeyn}) = C$ and then it can be concluded that $\mathrm{WF}' (G^{\Feyn, \aFeyn}) \supset C^{\pm}$ because of $\mathrm{WF}' (G^{\Feyn, \aFeyn}) \subset \varDelta \, \dotCoTanM \cup C^{\pm}$. 
Finally, the fact 
\begin{equation*}
	\varDelta \, \dotCoTanM = \WFPrime{I} = \WFPrime{\square G^{\Feyn, \aFeyn}} \subset  \WFPrime{G^{\Feyn, \aFeyn}}
\end{equation*}
together with Lemma~\ref{lem: Hoermander_lem_26_1_16}, complete the proof. 
%
%
%
%
%
%
%
%
%
%
%
%
%

\subsection{Pauli-Jordan operator as a Lagrangian distribution} 
\label{sec: causal_propagator_NHOp_FIO}
Let $E \to M$ be a smooth complex vector bundle over a globally hyperbolic spacetime $(M, g)$ and $\square$ a normally hyperbolic operator on $E$. 
As mentioned in Section~\ref{sec: setup}, $\square$ admits unique advanced $G^{\adv}$ and retarded $G^{\ret}$ propagators whose wavefront sets are given by 
\begin{equation} \label{eq: WF_advanced_retarded_Green_op_NHOp}
	\WFPrime{G^{\ret, \, \adv}} \subset \varDelta \, \dotCoTanM \cup C^{\ret, \, \adv},   
\end{equation}
where the retarded and the advanced geodesic relations are define by 
\begin{equation} \label{eq: def_advanced_retarded_bicharacteristic_relation_NHOp}
	C^{\ret, \, \adv} := \{ \xxiyeta \in \coLightBun \times \coLightBun | \exists s \in \R : \xxi = \Phi_{s} \yeta, x \in J^{\pm} (y) \}.
\end{equation}

In this section we will inscribe the Pauli-Jordan operator   
\begin{equation} \label{eq: def_causal_propagator}
	G := G^{\ret} - G^{\adv} : \comSecE \to C_{\mathrm{sc}}^{\infty} (M; E)
\end{equation}
for $\square$ as a Fourier integral operator. 
The result has been proven by 
Duistermaat-H\"{o}rmander~\cite[Thm. 6.6.1]{Duistermaat_ActaMath_1972}  
for any scalar pseudodifferential operator of real principal type, but a systematic derivation for the bundle version is not so easy to find.  

As a preparation, we remark that there is a natural density $\dVol_{\ms C}$ on the geodesic relation $C$ and hence on the forward/backward geodesic relations $C^{\pm}$, as originally due to 
Duistermaat and H\"{o}rmander~\cite[p. 230]{Duistermaat_ActaMath_1972} 
for a generic manifold $M$, which simplifies considerably for a globally hyperbolic spacetime $(M, g)$ as reported by 
Strohmaier and Zelditch~\cite[$(52)$, Rem. 7.1]{Strohmaier_AdvMath_2021}. 
By definition~\eqref{eq: def_bicharacteristic_relation_NHOp}, for each $\xxiyeta \in C$ there is a unique $s \in \R$ such that $\xxi = \varPhi_{s} \yeta$ so that $C$ can be identified with an open subset of $\R \times \coLightBun$. 
On a globally hyperbolic spacetime $(M, g)$, the set of lightcones $\coLightBun \to M$ has the structure of a conic contact manifold where the Hamiltonian  
\begin{equation} \label{eq: def_Hamiltonian_metric}
	H_{g} : C^{\infty}(M; T^{*} M) \to \R, ~ \xxi \mapsto H_{g} \xxi := \frac{1}{2} g_{x}^{-1} (\xi, \xi) 
\end{equation}
induced by the spacetime metric $g$ vanishes identically and its Hamiltonian reduction is the the space $\mathcal{N}$ of scaled-lightlike geodesic strips. 
Note that $\mathcal{N}$ is a symplectic manifold and hence it admits the Liouville form $\dVol_{\ms \mathcal{N}}$. 
The prefactor $1/2$ in the preceding definition has been used in order to identify the relativistic Hamiltonian flow with the geodesic flow. 
Denoting by $\tilde{s}$, the dilation parameter on $\coLightBun$, the natural halfdensity on $C$ is given by 
\begin{equation}
	\sqrt{|\dVol_{\ms C}|} := \sqrt{|\rd s|} \otimes \sqrt{|\rd \tilde{s}|} \otimes \sqrt{| \dVol_{\ms \mathcal{N}}|}. 
\end{equation}  
Note, this halfdensity differs from that by Duistermaat-H\"{o}rmander by a factor of $2$ because they used the Hamiltonian flow of $g^{-1}$ to parametrise $\mathcal{N}$, in contrast to the flow of the Hamiltonian vector field $X_{g/2}$ generated by $H_{g}$. 
Moreover, this density is conserved under geodesic flow: $\pounds_{X_{g/2}} \dVol_{\ms C} = 0$.  
The densities on $C^{\pm}$ follow from the fact $\coLightBun = \dot{T}^{*}_{0, +} M \sqcup \dot{T}^{*}_{0, -} M$ in $n \geq 3$.

Employing 
Theorems~\ref{thm: exist_unique_Feynman_parametrix_NHOp},~\ref{thm: Hoermander_thm_25_2_4} and Definition~\ref{def: P_compatible_connection}, we imitate the proof for the scalar version~\cite[Thm. 6.6.1]{Duistermaat_ActaMath_1972}  
to obtain

%
%
%
\begin{theorem} \label{thm: causal_propagator_NHOp_FIO}
	Let $E \to M$ resp. $\pi: \dotCoTanM \to M$ be a vector bundle resp. the punctured cotangent bundle over a globally hyperbolic spacetime $(M, g)$ and $\square$ a normally hyperbolic operator on $E$.  
	Then, the Schwartz kernel $G$ of the Pauli-Jordan operator for $\square$ is the Lagrangian distribution 
	\begin{subequations}
		\begin{eqnarray*}
			&& G \in I^{-3/2} \big( M \times M, C'; \Hom{E, E} \big), 
			\\ 
			&& \symb{G} = \frac{\ri}{2} \sqrt{2 \pi |\rd \mathsf{v}_{\ms C}|} \otimes \mathbbm{m} w, 
			\\ 
			&& \Char{G} \cap C = \emptyset,  
		\end{eqnarray*}
	\end{subequations}
	where $\dVol_{\ms C}$ is the natural volume form on the geodesic relation $C$, $\mathbbm{m}$ is the section of the Keller-Maslov bundle $\Maslov \to C$ 
 	constructed in~\cite[pp. 231-232]{Duistermaat_ActaMath_1972}, and $w$ is the unique element of $C^{\infty} \big( C; \pi^{*} \Hom{E, E}_{C} \big)$ that is diagonally the identity endomorphism and off-diagonally covariantly constant  
	\begin{equation*} \label{eq: symbol_causal_propagator_NHOp_covarinatly_constant}
		\nabla_{X_{g/2}}^{\ms \pi^{*} \Hom{E, E}} w = 0    
	\end{equation*}
	with respect to the $\square$-compatible Weitzenb\"{o}ck covariant derivative $\nabla_{X_{g/2}}^{\ms \pi^{*} \Hom{E, E}}$ along the geodesic vector field $X_{g/2}$.
\end{theorem}
%
%
%
%
%
%
%
%
%
%
\section{Hadamard states from Feynman parametrices}
In this section we will show that the unique (up to smoothing operators) Feynman parametrix $\FeynParametrix := \eqref{eq: Feynman_parametrices_NHOp}$ constructed in Theorem~\ref{thm: exist_unique_Feynman_parametrix_NHOp} can be turned into a Green's operator $\FeynGreenOp$ - the Feynman propagator for $\square$. 
We will see later that this is equivalent to constructing a Hadamard $2$-point distribution.  
As remarked by 
Duistermaat-H\"{o}rmander~\cite[Thm. 6.6.2]{Duistermaat_ActaMath_1972},  
in the case of functions there is a remarkable positivity property. 
Namely, that one can modify $\FeynParametrix$ by a smoothing operator $R$ such that
\begin{equation*}
	W + R := - \ri (\FeynParametrix - \advGreenOp) + R
\end{equation*}
is non-negative in the sense that $(W + R) (\bar{u} \otimes u) \geq 0$ for any $u \in C_{\mathrm{c}}^{\infty} (M)$. 
This means, of course that, there exists a Feynman parametrix that has this property with $R = 0$. 
Thus, there is a Feynman parametrix that satisfies $W \geq 0$ in the above sense. 
We state this here for the case of vector bundles.

%
%
%
\positivityFeynmanMinusAdvNHOp*

%
%
%
\begin{proof}
	Taking the adjoint $\FeynParametrix {}^{*} \square \equiv I$ of the Feynman parametrix $\FeynParametrix$ we find that $\WFPrime{\FeynParametrix {}^{*}} \subset \varDelta \, \dotCoTanM \cup C^{-}$ by Definition~\ref{def: Feynman_parametrix}.  
	Thus $\antiFeynParametrix:=\FeynParametrix {}^{*} $ is an anti-Feynman parametrix. 
	Now differences of parametrices are solutions of the homogeneous equation up to smooth kernels. By propagation of singularities (Theorem \ref{thm: propagation_singularity_Sobolev_WF}), differences of parametrices have a wavefront set that consists of lightlike covectors and is invariant in both arguments under the geodesic flow. It follows that
	\begin{align*} 
 		\WFPrime{\FeynParametrix-\advGreenOp} \subset C^+, \quad \WFPrime{\antiFeynParametrix-\retGreenOp}\subset C^+,
 		\\
 		\WFPrime{\antiFeynParametrix-\advGreenOp} \subset C^-, \quad \WFPrime{\FeynParametrix-\retGreenOp}\subset C^-.
	\end{align*}
	Since 
	$
	\FeynParametrix + \antiFeynParametrix - (\advGreenOp + \retGreenOp) =
	\FeynParametrix-\advGreenOp + \antiFeynParametrix-\retGreenOp = \antiFeynParametrix-\advGreenOp +\FeynParametrix-\retGreenOp
 	$
	it follows that $ \WFPrime{\FeynParametrix + \antiFeynParametrix -( \advGreenOp + \retGreenOp)} = C^+ \cap C^- = \emptyset$.
	In other words, we have  
	\begin{equation} \label{eq: Feynman_plus_antiFeynman_adv_plus_ret_NHO}
		\FeynParametrix + \antiFeynParametrix \equiv \advGreenOp + \retGreenOp. 
	\end{equation}
	As $(\retGreenOp)^* =  \advGreenOp$ this implies that $(\FeynParametrix - \advGreenOp)^* + (\FeynParametrix - \advGreenOp) \equiv 0$. Therefore, $-\ri (\FeynParametrix - \advGreenOp)$ can be chosen formally self-adjoint 
	by modifying $\FeynParametrix$ by a smoothing operator.

	The correctness of the preceding relation \eqref{eq: Feynman_plus_antiFeynman_adv_plus_ret_NHO} can also be seen as follows.	The distinguished parametrices are unique and therefore coincide, modulo smoothing operators, with the ones constructed via microlocalisation. 
	For each element in the characteristic set we use microlocalisation in a neighbourhood and have to make a choice of propagator amongst the two distinguished propagators for $D_{1}$. 
	The choices for Feynman and anti-Feynman are opposites in the sense that for each such point they correspond to opposite choices. 
	The same is true for the retarted and advanced fundamental solutions. 
	Therefore the sums coincide, modulo smoothing operators, as both sums represent microlocally the sum of retarted and advanced propagator.
	%
	%
	%
	
	To address the positivity, first consider the operator $D_{1}$ as in Section~\ref{sec: microlocalisation_NHOp} and observe that $- \ri$ times its causal propagator $F_{1}$ is positive by~\eqref{eq: causal_fundamental_sol_partial_derivative}:   
	\begin{equation} \label{eq: positivity_causal_fundamental_sol_partial_derivative}
		- \ri F_{1} (\bar{u}^{*} \otimes u) \geq 0. 
	\end{equation}

	To account for $W$, we note that $A$ and $A^{*}$ in Theorem~\ref{thm: microlocalisation_NHOp} preserve wave front sets and thus, one obtains $(x_{0}, \xi^{0}; 0, \eta_{1} \rd y^{1}) \notin \WFPrime{W - A F_{1} A^{*}}$ from parametrix differences. Patching such microlocally constructed solutions together is however a priori not preserving positivity. Therefore we  need a refined version of microlocalisation and hence respective homogeneous Darboux theorem. 

	On an $n$-dimensional globally hyperbolic spacetime $(M, g)$ with Cauchy hypersurface $(\Sigma, h)$, the space of lightlike (unparametrised) geodesics is a $2n - 3$-dimensional smooth conic manifold which can be naturally identified with the cotangent unit sphere bundle $\mathbb{S}^{*} \Sigma \to \Sigma$ with respect to the metric $h$~\cite{Low_NA_2001}. 
	We choose $L$, a formally selfadjoint elliptic operator of order $-1/2$ with scalar principal symbol and vanishing subprincipal symbol as in the proof of Theorem~\ref{thm: microlocalisation_NHOp}. 
	One can always choose the principal symbol of $L$ such that $X_{L \square L}$ is a complete vector field~\cite[Thm. 6.4.3 and p. 234]{Duistermaat_ActaMath_1972} 
	(see also~\cite[Lem. 26.1.11 and 26.1.12]{Hoermander_Springer_2009},~\cite[Prop. 2.1]{Strohmaier_AdvMath_2021}).  
	Since the bundle of forward/backward lightlike covectors $\dot{T}_{0,\pm}^{*} M \to M$ are connected components of the bundle of lightlike covectors $\coLightBun \to M$, there exists a homogeneous diffeomorphism $\coLightBun \to (\mathbb{S}^{*} \Sigma \oplus \mathbb{S}^{*} \Sigma) \times \R \times \R_{+}$ such that $X_{L \square L}$ is mapped to the vector field $\partial / \partial s$ when the variable in $\mathbb{S}^{*} \Sigma \times \R \times \R_{+}$ are denoted by $(\cdot, s, \cdot)$. 	
	In this setting, a refined Darboux theorem yields that, for every lightlike covectors $\xxiNot$ on $(M, g)$, there is an open conic coordinate chart $\big( \cU, (x^{1}, \ldots, x^{n}; \xi_{1}, \ldots, \xi_{n}) \big)$ and a homogeneous symplectomorphism   
	\begin{equation*} \label{eq: def_bijective_symplecto}
		\kappa : \cU \to \cU', \; \xxi \mapsto 
		\kappa \xxi := \big( y^{1} \xxi, \ldots, y^{n} \xxi; \eta_1  \xxi, \ldots, \eta_{n} \xxi \big), 
	\end{equation*}
	bijectively mapping $\cU$ to an open conic neighbourhood $\cU'$ of $\kappa \xxiNot := (0, \eta_{1} \rd y^{1})$ in $\dotCoTanRn$ such that 
	\begin{equation}
		\big( \kappa^{-1} \big)^{*} \symb{P} = \xi_{1} \, \one_{\End{E}}, 
	\end{equation}
	$\Char{D_{1}} := \{ \yeta \in \Rn \times \dot{\R}^{n} ~|~ y^{1} = 0 \}$ is symmetric with respect to the plane $y^{1} = 0$ and convex in the direction of $y^{1}$-axis, and $\cU' \cap \Char{D_{1}}$ is invariant under the translation along the $y^{1}$-axis. 
	Since $\rd \kappa^{-1} (X_{L^{*} \square L}) = \one_{\matN} \partial / \partial y^{1}$, it follows that $\cU \cap \coLightBun$ is invariant under $X_{\square}$~\cite[Lem. 6.6.3]{Duistermaat_ActaMath_1972}. 

	Suppose that $\cV_{1}, \cV_{2}$ are closed conic neighbourhoods of $\xxiNot$ such that $\cV_{1} \subset \cU$ and $\cV_{2} \subset \mathrm{int} (\cV_{1})$ while $\cV_{\iota} \cap \coLightBun, \iota = 1, 2$ are invariant under the geodesic flow and that $\kappa (\cV_{\iota})$ are convex in the $y^{1}$-direction and symmetric about the plane $y^{1} = 0$. 
	Remark~\ref{rem: global_microlocalisation_NHOp} then entails that there exists a properly supported $A \in I^{-1/2} \big( M \times \Rn, \varGamma'_{1}; \Hom{\C^{N}, E} \big)$ so that $A^{*} A$ and $A A^{*}$ both are pseudodifferential operators of order $-1$ and  
	\begin{subequations} \label{eq: microlocalisation_NHOp_refined}
		\begin{eqnarray}
			&& 
			\Char{A} \cap \varGamma'_{2} = \emptyset,
			\label{eq: microlocalisation_NHOp_1_refined} 
			\\ 
			&& 
			{\Char{A^{*} A}} \cap \kappa(\cV_{2}) = \emptyset  
			,\quad
			{\Char{A A^{*}}} \cap \cV_{2} = \emptyset, 
			\label{eq: microlocalisation_NHOp_2_refined}
			\\ 
			&& 
			\ES{A^{*} \square A - D_{1}} \cap \varGamma_{2} = \emptyset, 
			\label{eq: microlocal_conjugate_NHOp_refined}
		\end{eqnarray}
	\end{subequations} 
	where $\varGamma_{1}$ and $\varGamma_{2}$ are the graphs of the restriction of $\kappa$ to $\cV_{1}$ and to $\cV_{2}$, respectively.

	It follows from Theorem~\ref{thm: exist_unique_Feynman_parametrix_NHOp} and~\eqref{eq: microlocalisation_NHOp_refined} that we can microlocally conjugate $W$ to $\ri F_{1}$ 
	(see~\cite[p. 237]{Duistermaat_ActaMath_1972} for the scalar version):  
	\begin{equation} \label{eq: FIO_6_6_14_bundle}
		\mathrm{WF}' \big( W - A (- \ri F_{1}) A^{*} \big) \cap (\cV_{2} \times \cV_{2}) = \emptyset, 
		\quad 
		\cV_{2} \cap \coLightBun \subset \big\{ \dot{T}_{0,\pm}^{*} M \big\}.  
	\end{equation} 

	%
	%
	%
	\begin{lemma} 
	\label{lem: FIO_lem_6_6_5_bundle}
		For a normally hyperbolic operator $\square$ as in Proposition~\ref{thm: positivity_Feynman_minus_adv_NHOp} and Lagrangian distributions $A$ given by~\eqref{eq: microlocalisation_NHOp_refined}, let $\varPsi \in \PsiDO{0}{M; E}$ such that $\ES{[\varPsi, \square]} \cap \coLightBun = \emptyset$ and $\ES{\varPsi} \subset \cV_{2}, \coLightBun \cap \cV_{2} \subset \dot{T}_{0, \pm}^{*} M$. 
		Then 
		\begin{equation*}
			W \varPsi \varPsi^{*} = \varPsi A (-\ri F_{1}) A^{*} \varPsi^{*} \mod C^{\infty} \big( M \times M; \Hom{E, E} \big).  
		\end{equation*}
	\end{lemma}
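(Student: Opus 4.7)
The plan is to decompose
\begin{equation*}
	W \varPsi \varPsi^* - \varPsi A F A^* \varPsi^* = (W - A F A^*) \varPsi \varPsi^* + [A F A^*, \varPsi] \varPsi^*
\end{equation*}
and to show that each summand is smoothing, by rather different arguments: a global flow-invariance argument for the first, and a local $D_1$-model commutation computation for the second.

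For the first summand, I would exploit that both $W$ and $A F A^*$ are bisolutions of $\square$ modulo smoothing: $W$ because $\FeynParametrix$ and $\advGreenOp$ are parametrices of $\square$, and $A F A^*$ because $\square A \equiv A D_1$ microlocally on $\cV_2$ by~\eqref{eq: microlocal_conjugate_NHOp_refined} while $D_1 F = 0$. Hence $\WFPrime{W - A F A^*}$ is contained in the geodesic relation $C$. Combining $\ES{\varPsi \varPsi^*} \subset \cV_2$ with the geodesic flow invariance of $\cV_2 \cap \Char{\square}$ arranged in the refined microlocalization~\eqref{eq: microlocalisation_NHOp_refined}, any $(x, \xi; y, \eta) \in \WFPrime{(W - A F A^*) \varPsi \varPsi^*}$ forces $(y, \eta) \in \cV_2 \cap \Char{\square}$, and flow-invariance along the null geodesic with $(x, \xi) = \varPhi_s (y, \eta)$ then gives $(x, \xi) \in \cV_2 \cap \Char{\square}$ as well. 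So the wavefront set sits inside $\cV_2 \times \cV_2$, which is disjoint from $\WFPrime{W - A F A^*}$ by~\eqref{eq: FIO_6_6_14_bundle}. The first summand is therefore smoothing.

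For the second summand, I would transport $[A F A^*, \varPsi] \varPsi^*$ to the $D_1$-model through Proposition~\ref{prop: microlocalisation_NHOp}. Setting $\tilde \varPsi := A^* \varPsi A$, the hypothesis $\ES{[\varPsi, \square]} \cap \Char{\square} = \emptyset$ translates via~\eqref{eq: microlocal_conjugate_NHOp_refined} to $\ES{[D_1, \tilde \varPsi]} \cap \Char{D_1} = \emptyset$. Since $[D_1, \tilde \varPsi] = - \ri \partial_{y^1} \tilde \varPsi$ at the symbol level, this forces the total symbol of $\tilde \varPsi$ to be independent of $y^1$ modulo smoothing in a conic neighbourhood of $\Char{D_1} = \{\eta_1 = 0\}$. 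Meanwhile,~\eqref{eq: causal_fundamental_sol_partial_derivative} displays $F$ as a Dirac mass in $y' - z'$ tensored with $\one_{\CN}$ (where $y' = (y^2, \ldots, y^n)$), so the kernel of $F$ is $y^1$-translation invariant and its wavefront is supported on $\{\eta_1 = 0\}$ in both factors. A direct kernel computation, or equivalently an order-by-order asymptotic symbol calculation, then yields $[F, \tilde \varPsi] \in \PsiDON{-\infty}$ microlocally near $\Char{D_1}$; multiplying by $\tilde \varPsi^*$ preserves smoothness, and conjugating back by $A$ and $A^*$---together with the localisation $\ES{\varPsi \varPsi^*} \subset \cV_2$ to absorb regions outside the conjugation's domain of validity---gives $[A F A^*, \varPsi] \varPsi^*$ smoothing.

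The principal obstacle is the symbol-level analysis in the second summand: while the informal picture---that $F$ is $y^1$-invariant and $\tilde \varPsi$ is microlocally $y^1$-invariant near $\Char{D_1}$---makes the vanishing of $[F, \tilde \varPsi]$ modulo smoothing transparent, the rigorous justification requires matching the asymptotic expansion of the full symbol of $\tilde \varPsi$ against the explicit distributional kernel of $F$ term by term, as in the scalar model~\cite[Lem. 6.6.5]{Duistermaat_ActaMath_1972}. The matrix-valued aspect is essentially harmless since $F$ itself factors as a scalar times $\one_{\CN}$, and the only genuinely bundle-specific input is the $P$-compatibility of the Weitzenb\"{o}ck connection, which is already encoded in the construction of $A$ in Proposition~\ref{prop: microlocalisation_NHOp}.
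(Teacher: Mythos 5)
Your plan is correct and follows essentially the same route as the paper's sketch: both reduce to the $D_{1}$-model via the microlocal conjugation of Proposition~\ref{prop: microlocalisation_NHOp} and~\eqref{eq: microlocalisation_NHOp_refined}, exploit the $y^{1}$-independence of the conjugated operator's total symbol near $\Char{D_{1}}$ implied by the commutator hypothesis $\ES{[\varPsi, \square]} \cap \Char{\square} = \emptyset$, and use the geodesic-flow invariance of $\cV_{2} \cap \Char{\square}$ together with~\eqref{eq: FIO_6_6_14_bundle} to localize. What the paper rolls into ``a few algebraic manipulations'' at the end, you make explicit through the decomposition into $(W - A F A^{*}) \varPsi \varPsi^{*}$ plus $[A F A^{*}, \varPsi] \varPsi^{*}$; this is a cleaner organization but not a genuinely different argument.

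Two remarks. First, your stated reason for $\WFPrime{W - A F A^{*}} \subset C$ is slightly off: $A F A^{*}$ is only a microlocal bisolution on $\cV_{2}$, not globally, since $\square A - A D_{1}$ need not be smoothing outside the conjugation's domain. The inclusion is nonetheless correct, because $\WFPrime{A F A^{*}} \subset C$ follows directly from the wavefront composition calculus for $A$, $F$, $A^{*}$ (their wavefronts are confined to a neighbourhood of $\varGamma_{1}$ and the $D_{1}$-null relation respectively), and $\WFPrime{W} \subset C$ because $W$ is a genuine bi-parametrix of $\square$. Second, you correctly identify the rigorous verification that $[F, \tilde{\varPsi}]$ is smoothing near $\Char{D_{1}}$ as the principal obstacle, but you leave it open. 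The paper closes it with the $\tau_{h}$-device: write $F = \int \tau_{h} \, \rd h$ where $\tau_{h}$ is convolution by the Dirac measure at $(h, 0, \ldots, 0)$, observe that the $y^{1}$-independence of the total symbol of $\tilde{\varPsi}$ near $\Char{D_{1}}$ makes $[\tau_{h}, \tilde{\varPsi}]$ of order $-\infty$ there, note that $\Char{D_{1}}$ is invariant under $y^{1}$-translation so the commutator's action stays microlocally trivial, and integrate in $h$. This is the mechanism of Duistermaat--H\"{o}rmander, Lem.\ 6.6.5, and is what should replace your vague ``direct kernel computation or order-by-order asymptotic symbol calculation'' to make the argument airtight.
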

	%
	%
	%
	\begin{proof}
	 	The idea of the proof is to use microlocalisation to reduce this to the special case $P=D_1$.
		To reach the conclusion our main task is then to compute the relevant commutators which is straightforward and exactly analogous to the scalar counterpart~\cite[Lem. 6.6.5]{Duistermaat_ActaMath_1972}. 
		So, we only sketch the main steps for completeness. 
		The ellipticity~\eqref{eq: microlocalisation_NHOp_1_refined} allows us to relate any properly supported $\varPsi \in \PsiDO{0}{M; E}$ and $\varPsi' \in \PsiDON{0}$ with $\ES{\varPsi} \subset \cV_{2}$ and $\ES{\varPsi'} \subset \cV'_{2}$ by 
		\begin{equation*}
			\varPsi' \equiv A^{*} \varPsi A, 
			\quad 
			\varPsi \equiv A \varPsi' A^{*}. 
		\end{equation*} 
		A direct computation shows $\ES{[\varPsi', D_{1}]} \cap \Char{D_{1}} = \emptyset$ which implies that the derivative of the symbol of $\varPsi'$ with respect to $y^{1}$ is of order $-\infty$ in a neighbourhood of $\Char{D_{1}}$. 
		Denoting the convolution by the Dirac measure at $(h, 0, \ldots, 0)$ by $\tau_{h}$, we can rewrite~\eqref{eq: causal_fundamental_sol_partial_derivative} as $F_{1} = \int \tau_{h} \rd h$ and observe that $\tau_{h} \varPsi'  - \varPsi' \tau_{h} $ is of order $-\infty$ in a conic neighbourhood of $\Char{D_{1}}$. 
		Since $\Char{D_{1}}$ is invariant under translation in the $y^{1}$-direction, one has  
		\begin{equation*}
			\forall v \in \cE'(\Rn, \CN) : \WF{[\tau_{h}, \varPsi'] v} \cap \Char{D_{1}} = \emptyset. 
		\end{equation*}
		The result follows after the integration with respect to $h$ and a few algebraic manipulations.
	\end{proof}
	%
	%
	%
	
	One observes that $\varPsi A (-\ri F_{1}) A^{*} \varPsi^{*}$ is non-negative by~\eqref{eq: positivity_causal_fundamental_sol_partial_derivative} since $A A^{*}$ is non-negative with respect to the hermitian form on $E$. 
	Thus, the only thing left to conclude Proposition~\ref{thm: positivity_Feynman_minus_adv_NHOp} is that the identity can be expressed as a sum of operators of the form $\varPsi \varPsi^{*}$ discussed in Lemma~\ref{lem: FIO_lem_6_6_5_bundle}, provided the prerequisites in this lemma are satisfied.  
	Aiming to show this, we consider a closed conic neighbourhood $\cV_{3}$ of any lightlike covector $\xxiNot$ on $(M, g)$ where $\cV_{3} \subset \mathrm{int} (\cV_{2})$ and $\cV_{3} \cap \coLightBun$ is invariant under the geodesic flow.
	One can prove that there exists $\varPsi \in \PsiDO{0}{M; E}$ with $\ES{\varPsi} \subset \cV_{2}$ such that $\varPsi$ has a non-negative (homogeneous) principal symbol equal to $1$ on $\cV_{3}$ and $\ES{[\varPsi, \square]} \cap \coLightBun = \emptyset$~\cite[Lem. 6.6.6]{Duistermaat_ActaMath_1972}.
	In other words, the hypotheses in Lemma~\ref{lem: FIO_lem_6_6_5_bundle} can be satisfied for $\varPsi$.
	Taking a suitable cover of $\coLightBun$, one can use these operators to construct a family of operators $\varPsi_{\alpha} \in \PsiDO{0}{M; E}$ satisfying 
	\begin{subequations}
		\begin{eqnarray}
			&& \ES{\varPsi_{\alpha}} \bigcap \coLightBun \subset \dot{T}_{0,\pm}^{*} M , 
			\label{eq: WF_POU_1}
			\\ 
			&& \mathrm{ES} \bigg( I - \sum_{\alpha} \varPsi_{\alpha} \varPsi_{\alpha}^{*} \bigg) \bigcap  \dot{T}_{0, \pm}^{*} M  = \emptyset, 
			\label{eq: WF_POU_2}
			\\ 
			&& \ES{[\square, \varPsi_{\alpha}]} \bigcap \coLightBun = \emptyset. 
			\label{eq: WF_commutator_NHOp_POU} 
		\end{eqnarray}
	\end{subequations}
	The construction of such a family that provides a microlocal partition of unity	is carried out in detail in~\cite[p. 238]{Duistermaat_ActaMath_1972}. 
	The proof follows the usual stategy of summing the operators and multiplying from left and right by the root of a parametrix. One can verify that the hypotheses in Lemma~\ref{lem: FIO_lem_6_6_5_bundle} are preserved under this construction. 
	To finalise the proof of Proposition~\ref{thm: positivity_Feynman_minus_adv_NHOp} we simply write 
	\begin{equation} \label{eq: Feynman_parametrix_P_mod_partial_deri}
		W 
		= \sum_{\alpha} W \varPsi_{\alpha} \varPsi_{\alpha}^{*} 
		\equiv \sum_{\alpha} \varPsi_{\alpha} A_{\alpha} \big(-\ri F_{1}) A_{\alpha}^{*} \varPsi_{\alpha}^{*},   
	\end{equation}
	which terminates the proof as every term on the right hand side of~\eqref{eq: Feynman_parametrix_P_mod_partial_deri} is nonnegative with respect ot the hermitian form on $E$. 
\end{proof}
%
%
%

Of course, at the end one is interested in true solutions rather than parametrices. 
In fact there is a simple way, employing well-posedness of the Cauchy problem for $\square$,  modifying the Feynman parametrix into a Feynman propagator preserving the aforementioned positivity. 
%
%
%
\existenceFeynmanGreenOpNHOp*
%
%
%

We postpone to proof this to make the following observations. 
Let $(\Sigma, h)$ be a Cauchy hypersurface of $(M, g)$ and assume $f_{0}, f_{1} \in C^{\infty}_{\mathrm{c}} (\Sigma, E_{\Sigma})$. 
Since $M$ is globally hyperbolic, we will denote a point in $M$ as $(t, x) \in \R \times \Sigma$. 
Then we can define the distributions $f_{1} \otimes \delta_{\Sigma}, f_{0} \otimes \delta'_{\Sigma} \in \cE'(M; E)$ by
\begin{equation} \label{eq: def_distributional_Cauchy_data_NHOp}
	f_{1} \otimes \delta_\Sigma (\phi)  
	:= 
	\int_\Sigma \phi (x) \, f_{1} (x) \, \dVolh (x), 
	\quad  
	f_{0} \otimes \delta'_\Sigma (\phi)  
	:= 
	\int_\Sigma (\partial_{N} \phi) (x) \, f_{0} (x) \, \dVolh (x), 
\end{equation}
where $\phi \in C_{\mathrm{c}}^{\infty} (M; \bar{E}^{*})$, $N$ is a future directed unit normal vector field on $M$ along $\Sigma$, and $\dVolh$ is the Riemannian volume form on $\Sigma$; recall that $\bar{E}^{*}$ is the conjugate-dual bundle (Section~\ref{sec: adjoint_FIO}) of $E$ induced by the sesquilinear form $(\cdot|\cdot)$ on $E$. 
We denote by $\cW_\Sigma \subset \cE'(M; E)$, the span of the set of distributions of this form. By a duality argument both retarded and advanced fundamental solutions extend to continuous maps $\mathcal{E}'(M;E) \to \mathcal{D}'(M;E)$, and if $G$ is the causal propagator (Theorem~\ref{thm: causal_propagator_NHOp_FIO}) and
$f := f_{0} \otimes \delta'_\Sigma + f_{1} \otimes \delta_\Sigma$, then $u = G(f)$ is a smooth solution of $\Box u=0$ with Cauchy data $(f_0,f_1)$ on $\Sigma$ 
(see e.g.~\cite[Lem 3.2.2]{Baer_EMS_2007},~\cite[Thm. 3.20]{Jubin_LMP_2016}).

Assume that $w \Box$ and $\bar{\Box}^{*} w$ are smooth for any bidistribution $w \in \cD'(M \times M; E \boxtimes \bar{E}^*)$.  
Let us denote by $\cE'_{\mathrm{N}^*\Sigma} (M; E)$, the set of compactly supported distributions with wavefront set contained in the conormal bundle $\mathrm{N}^*\Sigma$ of $\Sigma$. 
This space is endowed with a natural 
locally convex topology~\cite[p. 125]{Hoermander_ActaMath_1971} 
with respect to which $C_{\mathrm{c}}^{\infty} (M; E)$ is sequentially dense in $\cE'_{\mathrm{N}^*\Sigma} (M; E)$ 
(see e.g.~\cite[Thm. 8.2.3]{Hoermander_Springer_2003} 
or the 
exposition~\cite[Sec. 4.3.1]{Strohmaier_Springer_2009} 
for details). 
The bidistribution $w$ can be defined as a sequentially continuous bilinear form on $\cE'_{\mathrm{N}^*\Sigma} (M; E)$.
Since $\cW_\Sigma \subset \cE'_{\mathrm{N}^*\Sigma} (M; E)$, the form $w$ is then also defined on $\cW_\Sigma$.

%
%
%
\begin{lemma} \label{lem: positive_bisolution_NHOp}
	Let $\big( E \to M, (\cdot|\cdot) \big)$ be a smooth complex hermitian vector bundle over a globally hyperbolic spacetime $(M, g)$ whose Cauchy hypersurface is $\Sigma$. 
	Suppose that $\square$ is a normally hyperbolic operator on $E$ and that $w \in \cD'(M \times M, E \boxtimes \bar{E}^*)$ is a bisolution of $\square$. 
 	Then, $w$ is non-negative with respect to $(\cdot|\cdot)$ if and only if $w (\bar{f}^{*} \otimes f) \geq 0$ for any $f \in \cW_\Sigma$ where $\cW_{\Sigma}$ is the span of distributional Cauchy data~\eqref{eq: def_distributional_Cauchy_data_NHOp} of $\square$. 
\end{lemma}
%
%
%
\begin{proof}
	First note that, since $w$ is a bisolution it extends to a separately sequentially continuous bilinear form on $\cE'_{\mathrm{N}^*\Sigma} (M; \bar{E}^*) \times \cE'_{\mathrm{N}^*\Sigma} (M; E)$. 
 	We denote the extension by the same letter. 
 	As this form is a bisolution it vanishes on 
 	$\bar{\Box}^* C^\infty_{\mathrm{c}} (M; \bar{E}^*) \times C^\infty_{\mathrm{c}} (M; E)$ 
 	and also on 
 	$C^\infty_{\mathrm{c}} (M; \bar{E}^*) \times \Box C^\infty_{\mathrm{c}} (M; E)$. 
 	By sequential continuity and sequential density of $C^\infty_{\mathrm{c}}$ in $\cE'_{\mathrm{N}^*\Sigma}$, it follows that $w$ vanishes on 
 	$\bar{\Box}^* \cE'_{\mathrm{N}^*\Sigma} (M; \bar{E}^{*}) \times \cE'_{\mathrm{N}^*\Sigma} (M; E)$ 
 	and on
 	$\cE'_{\mathrm{N}^*\Sigma} (M; \bar{E}^{*}) \times \Box \cE'_{\mathrm{N}^*\Sigma} (M; E)$. 
 	We will show that any 
 	$u \in C^\infty_{\mathrm{c}} (M; E)$ can be written as $u = f + \Box \phi$ for some $f \in \cW_\Sigma$ and for some $\phi \in \cE'_{\mathrm{N}^*\Sigma} (M; E)$, and the analogous statement holds for the conjugate-dual bundle and the conjugate-dual normally hyperbolic operator. 
 	This obviously implies the statement. 

 	To see that this is indeed the case, observe that $\Box (G u) = 0$ and therefore, if $(f_{0}, f_{1})$ is the Cauchy data of $G(u)$ on $\Sigma$ and $f$ the corresponding element in 
 	$\cW_{\Sigma}$, then we have $G (u) = G (f)$. 
 	This implies that $G (u - f) = 0$.
 	Hence $G^{\ret}$ and $G^{\adv}$ applied to $u - f$ are the same. 
 	Thus $\phi := \retGreenOp (u - f)$ must be a compactly supported distribution because of the support properties $G^{\ret, \adv}$ and the global hyperbolicity of $M$. 
 	By propagation of singularities (Theorem~\ref{thm: propagation_singularity_Sobolev_WF}), this distribution must be again in $\cE'_{\mathrm{N}^*\Sigma} (M; E)$ and we have $u - f = \Box \phi$. 
 	The same proof works for the bundle $\bar{E}^*$ and the operator $\bar{\Box}^*$.
\end{proof}
%
%
%

Finally, we are prepared to show the existence of Feynman propagators for $\square$. 
Moreover, if there exists a hermitian form on $E$ then Feynman propagators can be chosen non-negative with respect to this form and hence implying existence of $2$-point bidistributions corresponding to Hadamard states. 

%
%
%
\begin{proof}[Proof of Theorem~\ref{thm: existence_Feynman_propagator_NHOp}]
 	Let $W$ be the parametrix for $\square$ constructed in Proposition~\ref{thm: positivity_Feynman_minus_adv_NHOp}. 
 	Since the wavefront set of $W$ does not intersect with the conormal bundle $\mathrm{N}^{*} \Sigma$ of any Cauchy hypersurface $\Sigma$ in $M$, we fix a $\Sigma$ and restrict $W$ and its normal derivative to create distributional Cauchy data on $\Sigma \times \Sigma$.
 	Suppose that $E$ is endowed with a hermitian form $(\cdot|\cdot)$ such that $W$ is non-negative with respect to $(\cdot|\cdot)$. 
 	Then the restriction of $W$ on $\Sigma$ is non-negative as well. 
 	Next we construct $\omega \in \cD' (M \times M; E \boxtimes \bar{E}^*)$ employing the solution operator with the same distributional Cauchy data as $W$ on $\Sigma \times \Sigma$. 
 	This will be a bisolution of $\square$ with the property 
 	\begin{equation}
 		\left( \omega (\bar{u}^{*} \otimes u) \geq 0,  \quad \forall u \in \mathcal{E}'_{\mathrm{N}^*\Sigma} (M; E) \right)
 		\Leftrightarrow \omega \geq 0, 	
	\end{equation}
 	by the sequential density of $C_{\mathrm{c}}^{\infty} (\Sigma; E_{\Sigma})$ in  $\mathcal{E}'_{\mathrm{N}^* \Sigma} (M; E)$ and Lemma~\ref{lem: positive_bisolution_NHOp}.
 	Note that $R = \omega - W$ is smooth since it solves the inhomogeneous problem with zero Cauchy data, which entails that 
 	\begin{equation}
 		\FeynGreenOp := \FeynParametrix + \ri R
 	\end{equation}
 	is the Feynman propagator with the required positivity property.
\end{proof}
%
%
%
%
%
%
%
%
%
%
\section{Feynman propagators for Dirac-type operators}
In this section we will construct Feynman propagators for a Dirac-type operator. 
We recall that a first-order linear partial differential operator $D$ on a vector bundle $E \to M$ over a Lorentzian manifold $(M, g)$ is called \textit{Dirac-type}, if its principal symbol $\symb{D}$ satisfies the Clifford relation 
(see e.g.~\cite[Sec. 2.5, 3.2]{Baer_Springer_2012} for details):  
\begin{equation*}
	\big( \symb{D} \xxi \big)^{2} = g_{x}^{-1} (\xi, \xi) \, \one_{\End{E}}   
\end{equation*} 
for any $\xxi \in T^{*} M$. 
Hence, the operator $D^{2}$ is normally hyperbolic and $\symb{D}$ defines a Clifford multiplication which turns $E$ into a bundle $(E \to M, \symb{D})$ of Clifford modules over $M$. 
We equip $E$ with a \textit{sesquilinear form} $(\cdot|\cdot)$ such that $D$ is formally selfadjoint. 
Then, under the assumption mentioned in Section~\ref{sec: setup}, one has a \textit{hermitian form} $\langle  \cdot|\cdot \rangle$ on $E$ satisfying~\eqref{eq: positive_definite_sesequilinear_form_Dirac_type_op}. 
Of course, this positive-definite inner product depends on the choice of the unit normal vector field. 

If $D$ is formally selfadjoint with respect to $(\cdot|\cdot)$ then the first condition in Theorem~\ref{thm: existence_Feynman_propagator_NHOp} is satisfied and we obtain Feynman propagator $\mathsf{S}^{\Feyn}$ for $D$ immediately by setting $\mathsf{S}^{\Feyn} := D \mathsf{G}^{\Feyn}$ where $\mathsf{G}^{\Feyn}$ is the Feynman propagator for $D^{2}$. 
Nevertheless, the existence of Hadamard states cannot be deduced due to the positivity issue. 
One may wish to consider formally selfadjointness of $D$ with respect to $\langle \cdot | \cdot \rangle$ so that the hypotheses of Theorem~\ref{thm: existence_Feynman_propagator_NHOp} are satisfied resulting existence of Hadamard states. 
But this turns out to be too restrictive to encompass all Dirac-type operators on a globally hyperbolic spacetime (see Sections~\ref{sec: twisted_Dirac_op} and~\ref{sec: Rarita_Schwinger_op} for concrete examples) and so we refrain to impose the condition. 
However, we will show below that there is a more direct way to construct Feynman parametrices satisfying positivity, and hence Hadamard states. 

To begin with, recall that the manifold of lightlike covectors $\Char{D^{2}} = \coLightBun$ on a globally hyperbolic spacetime has only two connected components   
when\footnote{In 
	two dimensions, $\coLightBun$ has four connected components.}:
$n \geq 3$: the forward lightcone $\dot{T}_{0,+}^{*} M$ and the backward lightcone $\dot{T}_{0,-}^{*} M$.
Therefore, the geodesic relation $C :=$~\eqref{eq: def_bicharacteristic_relation_NHOp} has four different 
orientations\footnote{By 
	an orientation of a geodesic relation $C$, it is meant that any splitting of $C \setminus \varDelta \, \coLightBun$ into a disjoint union of open $C^{+}, C^{-} \subset C$ which are inverse relations~\cite[p. 218]{Duistermaat_ActaMath_1972}.
}
\cite[p. 218]{Duistermaat_ActaMath_1972} 
(see also~\cite[p. 540]{Radzikowski_CMP_1996})  
$(C^{+}, C^{-}), (C^{-}, C^{+}), (C^{\adv}, C^{\ret}), (C^{\ret}, C^{\adv})$ corresponding to the set $\{ \coLightBun, \dot{T}_{0,+}^{*} M, \dot{T}_{0,-}^{*} M, \emptyset \}$ of connected components of $\coLightBun$, where $C^{\pm}$ and $C^{\ret, \, \adv}$ are defined by~\eqref{eq: def_Feynman_bicharacteristic_relation_NHOp} and~\eqref{eq: def_advanced_retarded_bicharacteristic_relation_NHOp}, respectively. 
This enables to have a microlocal decomposition of Pauli-Jordan operator $G$ for $D^{2}$ as
\begin{subequations} 
	\begin{eqnarray}
		&& 
		G \equiv W^{+} - W^{-}, 
		\\ 
		&& 
		W^{\pm} D^{2} \equiv D^{2} W^{\pm} \equiv 0, 
		\\ 
		&& 
		W^{\pm} \in I^{-3/2} \big( M \times M, C^{\pm \prime}; \Hom{E, E} \big),   
	\end{eqnarray}
\end{subequations}
where $W^{+} := - \ri (\FeynParametrix - \advGreenOp), W^{-} := - \ri (\retGreenOp - \antiFeynParametrix)$ and we recall that $\equiv$ means modulo smoothing kernels. 
This decomposition follows from~\eqref{eq: Feynman_plus_antiFeynman_adv_plus_ret_NHO}. 

We look for an analogue of the preceding result for $D$. 
In particular, $\Char{D} = \Char{D^{2}} = \coLightBun$. 
On a globally hyperbolic spacetime $(M, g)$, $D$ admits unique advanced, retarded and non-unique (anti-)Feynman Green's operators:  
\begin{equation*}
	S^{\adv, \, \ret,  \, \Feyn, \, \aFeyn} := D G^{\adv, \, \ret, \, \Feyn, \, \aFeyn}.  
\end{equation*}
We decompose the Pauli-Jordan operator 
\begin{equation} \label{eq: def_causal_propagator_Dirac}
	S := S^{\ret} - S^{\adv} : \comSecE \to C_{\mathrm{sc}}^{\infty} (M; E) 
\end{equation}
for $D$ as $S^{+} + S^{-}$ up to smoothing operators, where $S^{\pm} := \sum_{\alpha} Q_{\alpha}^{\pm} S Q_{\alpha}^{\pm}$, $(Q_{\alpha}^{\pm})^{*} Q_{\alpha}^{\pm}$ are microlocal partition of unity such that $D Q_{\alpha}^{\pm} \equiv Q_{\alpha}^{\pm} D$, and $\ES{Q_{\alpha}^{\pm}} \subset \varDelta \, \dot{T}_{0,\pm} M$. 
Consequently,  
\begin{subequations}
	\begin{eqnarray}
		&& 
		S \equiv S^{+} + S^{-}, 
		\label{eq: microlocal_decomposition_causal_propagator_Dirac_type_op}
		\\ 
		&& 
		S^{\pm} D \equiv D S^{\pm} \equiv 0,  
		\label{eq: Wightman_distribution_Dirac_op_mod_smooth} 
		\\ 
		&& 
		S^{\pm} \in I^{-1/2} \big( M \times M, C^{\pm \prime}; \Hom{E, E} \big), 
		\label{eq: Wightman_distribution_Dirac_FIO}
	\end{eqnarray}
\end{subequations}
by an application of~\eqref{eq: product_Lagrangian_dist_bundle}. 
The operator $Q_{\alpha}^{\pm}$ can be constructed by choosing any $[q_{\alpha}^{\pm}] \in S^{0 - [\infty]} \big( \dotCoTanM, \Hom{E, E} \big)$ as its left total symbol such that $\mathrm{esssupp} (q_{\alpha}^{\pm}) \subset \dot{T}_{0,\pm}^{*} M$; in other words $q_{\alpha}^{\pm}$ is of order $-\infty$ near $y \in M$ and in a conic neighbourhood of $\eta$ for all $\yeta$ in the complement of $\mathrm{esssupp} (q^{\pm})$. 
Furthermore, we pick $\supp{q_{\alpha}^{\pm}}$ slightly away from the projection of $\ES{Q_{\alpha}^{\pm}}$ on $M$ so that $[D, Q_{\alpha}^{\pm}]$ is smooth. 
We note that 

%
%
%
\begin{lemma}
	As in the terminologies of Theorem~\ref{thm: Hadamard_bisolution_Dirac_type_op}, let $S$ be the Pauli-Jordan operator for $D$. 
	Then $\ri S$ is non-negative with respect to $(\cdot|\cdot)$.  
\end{lemma}
%
%
%
\begin{proof} 
	Let $u, v \in \secE$ having compact $\supp{u} \cap \supp{v}$ and let $K$ be a compact set  having a smooth boundary $\partial K$ with an outward unit normal vector $N$ and volume element $\rd \mathsf{v}$. 
	Then the Green-Stokes formula yields 
	(see e.g.,~\cite[(1.7)]{Dimock_AMS_1982} 
	or~\cite[(9)]{Baer_Springer_2012})
	\begin{equation*}
		\int_{K} \big( (Du|v)_{x} - (u|Dv)_{x} \big) \dVolg (x) = - \ri \int_{\partial K} (\symb{D} (N^{\flat}) u|v)_{x'} \, \rd \mathsf{v} (x'),  
	\end{equation*} 
	where $\dVolg$ is the Lorentzian volume form on $(M, g)$. 
	Setting $u := S^{\ret} w$ and $v := Sw$ for any $w \in \comSecE$ the above formula entails $(w | \ri S w) \geq 0$ due to the compactness of $\supp{S^{\ret} u} \subset J^{+} (\supp{u})$. 
\end{proof}
%
%
%

Therefore, $(u | {\ri} S^{\pm} u) \geq 0$ by~\eqref{eq: microlocal_decomposition_causal_propagator_Dirac_type_op}. 
Furthermore, 
\begin{equation}
	\WFPrime{S^{\adv, \, \ret, \, \Feyn, \, \aFeyn}} = \WFPrime{G^{\adv, \, \ret, \, \Feyn, \, \aFeyn}} \subset \varDelta \dotCoTanM \cup C^{\adv, \, \ret, \, \Feyn, \, \aFeyn}
\end{equation}
by~\eqref{eq: WF_advanced_retarded_Green_op_NHOp} and~\eqref{eq: def_Feynman_bicharacteristic_relation_NHOp}. 
We observe that the Lagrangian distribution $S^{\Feyn} - S^{\adv} \in I^{-1/2} \big( M \times M, C^{+ \prime}; \Hom{E, E} \big)$ is a bisolution of the homogeneous Dirac equation. 
Then, it follows from~\eqref{eq: Wightman_distribution_Dirac_op_mod_smooth} and~\eqref{eq: Wightman_distribution_Dirac_FIO} that $S^{+} \equiv S^{\Feyn} - S^{\adv}$. 
Therefore $\big( u \big| \ri (S^{\Feyn} - S^{\adv}) u \big)$ is non-negative. 
Hence, we arrive at the assertion which is the analogue of Proposition~\ref{thm: positivity_Feynman_minus_adv_NHOp} for Dirac-type operators. 

As before, in order to turn $\ri (S^{\Feyn} - S^{\adv})$ into an exact distributional bisolution $\omega$, we are going employ the well-posedness of the Cauchy problem for $D$ on a globally hyperbolic spacetime~\cite[Thm. 2]{Muehlhoff_JMP_2011}.  
For any $f \in C_{\mathrm{c}}^{\infty} (\Sigma; E |_{\Sigma})$, we define distributional Cauchy data $f \otimes \delta_{\Sigma}$ by 
\begin{equation} \label{eq: def_distributional_Cauchy_data_Dirac_type_op}
	f \otimes \delta_{\Sigma} (\phi) := \int_{\Sigma} \phi (x) \, \big( \symb{D} (x, N^{\flat}) \, f (x) \big) \, \dVolh (x) 
\end{equation}
for any $\phi \in C^{\infty} (M; E^{*})$, where we have used the notation $(t, x) \in \R \times \Sigma \cong M$. 
By $\mathcal{X}_{\Sigma} \subset \cE_{\mathrm{N}^{*} \Sigma}' (M; E)$, we denote the set of distributions of the above form where $\cE_{\mathrm{N}^{*} \Sigma}' (M; E)$ is as before. 
As in the previous section, any bidistribution $w \in \cD' (M \times M; E \boxtimes E^{*})$ such that $wD$ and $Dw$ are smooth in distributional sense, can be defined as a sequential continuous bilinear form on $\mathcal{X}_{\Sigma}$.  

%
%
%
\begin{lemma}
	As in the terminologies of Theorem~\ref{thm: Hadamard_bisolution_Dirac_type_op}, let $w \in \cD' (M \times M; E \boxtimes \bar{E}^{*})$ be a bisolution of $D$. 
	Then $(u|wu) \geq 0$ if and only if $w (\bar{u}^{*} \otimes u) \geq 0$ for all $u \in \mathcal{X}_{\Sigma}$. 
\end{lemma}
%
%
%
\begin{proof}
	The proof is completely analogous to the proof of Lemma~\ref{lem: positive_bisolution_NHOp}  with $\square$ replaced by $D$, and the Cauchy data~\eqref{eq: def_distributional_Cauchy_data_NHOp} and Pauli-Jordan operator $G$ (\eqref{eq: def_causal_propagator}), replaced by the Cauchy data~\eqref{eq: def_distributional_Cauchy_data_Dirac_type_op} and Pauli-Jordan operator $S$ (\eqref{eq: def_causal_propagator_Dirac}), respectively. 
\end{proof} 
%
%
%

Following exactly the route taken from Proposition~\ref{thm: positivity_Feynman_minus_adv_NHOp} to Theorem~\ref{thm: existence_Feynman_propagator_NHOp}, the construction yields a Feynman propagator that satisfies the positivity condition.
%
%
%
\HadamardBisolutionDiracTypeOp*
%
%
%
%
%
%
%
%
%
%
\section{Examples}
We list below several operators arising in the context of quantum field theories on a globally hyperbolic spacetime $(M, g)$. 
\subsection{Covariant Klein-Gordon operator}
Let $E \to M$ be a trivial line bundle, that is, its sections are just complex-valued smooth functions $C^{\infty} (M)$ on $M$. 
The covariant Klein-Gordon operator is defined by 
\begin{equation*}
	\square := - \div \circ \grad + \mathrm{m}^{2} + \lambda \mathsf{R} : C^{\infty} (M) \to C^{\infty} (M), 
\end{equation*}
where $\mathrm{m}^{2} \in \R_{+}$ is a parameter, physically interpreted as mass-squared of a linear Klein-Gordon field, $\lambda$ is a coupling and $\mathsf{R}$ is the Ricci scalar of $M$. 
The special case of $\square$ when $\mathrm{m}, \lambda = 0$ is called the d'Alembert/relativistic wave operator. 
Since the natural inner product on $C^{\infty} (M)$ is positive-definite with respect to which $\square$ is formally selfadjoint 
(see e.g.~\cite[Example 2.13]{Baer_Springer_2012}), 
Theorem~\ref{thm: existence_Feynman_propagator_NHOp} applies and positive Feynman propagators can be constructed using this method.   
%
%
%
%
%
%
%
%
%
%
\subsection{Connection d'Alembert operator} 
Let $E \to M$ be a smooth complex vector bundle and $\nabla^{E}$ a connection on $E$. 
We endow $M$ with the Levi-Civita connection $\nabla^{T^{*} M}$ on the cotangent bundle $T^{*}M$. 
These connections induce a connection $\nabla := \nabla^{T^{*} M} \otimes \one_{E} + \one_{T^{*} M} \otimes \nabla^{E}$ on the vector bundle $T^{*} M \otimes E \to M$. 
The connected d'Alembert operator is then defined by the composition of following three maps  
\begin{equation*}
	\square : C^{\infty} (M; E) 
	\stackrel{\nabla^{E}}{\to} C^{\infty} (M; T^{*} M \otimes E) 
	\xrightarrow{\nabla} C^{\infty} (M; T^{*} M \otimes T^{*} M \otimes E) 
	\xrightarrow{\tr \otimes I} C^{\infty} (M; E), 
\end{equation*} 
where $\tr_{g}$ is the metric trace (as defined after~\eqref{eq: Weitzenboeck_connection_NHO}) and $I$ is the identity operator on $E$. 
A straightforward computation shows that this operator is normally hyperbolic 
(see e.g.~\cite[Example 2.14]{Baer_Springer_2012}). 
Then, Theorem~\ref{thm: existence_Feynman_propagator_NHOp} entails that $\square$ admits a Feynman propagator whereas the conclusion for positivity depends on the positivity of the inner product with respect to which $\square$ is formally selfadjoint. 
%
%
%
%
%
%
%
%
%
%
\subsection{Hodge-d'Alembert operator} 
\label{sec: Hodge_d_Alembert_op}
Let $E := \wedge^{k} T^{*} M$ be the bundle of $k$-forms. 
The Hodge-d'Alembert operator is defined by  
\begin{equation*}
	\Box := - (\updelta \rd + \rd \updelta) : C^{\infty} (M; \wedge^{k} T^{*} M) \to C^{\infty} (M; \wedge^{k} T^{*} M),   
\end{equation*}
where $\rd$ is the exterior derivative and $\updelta$ is coexterior differential. 
By the Bochner formula, the Weitzenb\"ock connection is the Levi-Civita connection on $k$-forms and hence this connection is to be used for microlocalisation.
It can be shown that $\Box$ is a normally hyperbolic operator 
(see e.g.~\cite[Example 2.15]{Baer_Springer_2012}). 
There thus exists a Feynman propagator. 
Since the natural inner product with respect to which this operator is formally selfadjoint is not positive-definite unless $k=0$ or $k=n$, we cannot conclude non-negativity directly. For example for $k=1$ non-negativity is only expected on a subset as is usual for gauge theories. 
%
%
%
%
%
%
%
%
%
%
\subsection{Proca operator} 
\label{sec: Proca_op}
The Proca operator acting on covectors $A$ on $M$ is defined as   
\begin{equation*}
	P := \updelta \rd + \mathrm{m}^{2} : C^{\infty} (M; T^{*}M) \to C^{\infty} (M; T^{*}M), 
	\quad \mathrm{m} \in \R \setminus \{0\},  
\end{equation*}
which is not normally hyperbolic. 
Nevertheless, $P$ is equivalent to the normally hyperbolic operator $\updelta \rd + \rd \updelta + \mathrm{m}^{2}$ on $C^{\infty} (M; T^{*}M)$ together with the Lorenz constraint $\updelta A = 0$. 
Therefore, a Feynman propagator for $\updelta \rd + \rd \updelta + \mathrm{m}^{2}$ can be constructed by this method. The positivity issue is same as in Section~\ref{sec: Hodge_d_Alembert_op}. 
%
%
%
%
%
%
%
%
%
%
\subsection{Twisted Dirac operators} 
\label{sec: twisted_Dirac_op}
Let $S \to M$ be a spinor bundle over a globally hyperbolic spacetime $(M, g)$ 
admitting\footnote{A  
		spacetime $(M, g)$ admits a spin-structure if and only if the second Stiefel–Whitney class of $M$ vanishes. 
		We note that a spin structure always exists on a $4$-dimensional globally hyperbolic spacetime.
}  
a spin-structure. 
The Levi-Civita connection $\nabla^{\mathrm{LC}}$ on $(M, g)$ induces a spin connection $\nabla^{S}$ on $S$. 
Let $E \to M$ be a smooth vector bundle endowed with a connection $\nabla^{E}$.  
These two connections induce another connection $\nabla := \nabla^{S} \otimes \one_{E} + \one_{S} \otimes \nabla^{E}$ on the twisted spinor bundle $S \otimes E \to M$ and the twisted Dirac operator is defined by 
(see, e.g.~\cite{Baum_1981} for details) 
\begin{equation*} \label{eq: def_twisted_Dirac_op}
	\slashed{D} := - \ri \mathsf{c} \circ g^{-1} \circ \nabla : C^{\infty} (M; S \otimes E) \to C^{\infty} (M; S \otimes E), 
\end{equation*} 
where $\upgamma : TM \to \End{S}$ is the Clifford mapping, pointwise defined by  
\begin{equation*}
	\upgamma (X) \, \upgamma (Y) + \upgamma (Y) \, \upgamma (X) = 2 g (X, Y) \mathbbm{1}_{\End{S}} 
\end{equation*}
for any $X, Y \in T_{x} M$ and the Clifford multiplication $\mathsf{c} : TM \otimes S \otimes E \to S \otimes E$ is given by $X \otimes \psi \otimes u \mapsto (\upgamma (X) \psi) \otimes u$. 
The 
Schr\"{o}dinger~\cite{Schroedinger_GRG_2020}-Lichnerowicz~\cite{Lichnerowicz_1963} 
formula entails  
(see e.g.~\cite[Example 1]{Muehlhoff_JMP_2011}) 
\begin{equation*}
	\slashed{D}^{2} = \nabla^{*} \nabla + \frac{\mathsf{R}}{4} + \mathsf{F}, 
\end{equation*}
where $\mathsf{R}$ is the scalar curvature of $M$ and $\mathsf{F}$ is the Clifford multiplied curvature of $\nabla^{E}$. 
Clearly, $\slashed{D}^{2}$ is a normally hyperbolic operator and thus $\slashed{D}$ is of  Dirac-type. 
This formula also shows that the Weitzenb\"ock connection is the twisted spin-connection $\nabla$ on the twisted spinor bundle and therefore it induces the $\slashed{D}$-compatible connection on $\Hom{S \otimes E, S \otimes E} \to T^{*} M$, which is to be used for microlocalisation. 
Since $\slashed{D}$ is formally selfadjoint with respect to the natural inner product $(\cdot| \cdot)$ on $S \otimes E$, a Feynman propagator for $\slashed{D}$ exists by Theorem~\ref{thm: existence_Feynman_propagator_NHOp} yet the existence of Hadamard states cannot be concluded due to the fact that $(\cdot | \cdot)$ is not positive-definite unless $(M, g)$ is a Riemannian manifold. 
However, since $(M, g)$ is globally hyperbolic, there exist smooth spacelike Cauchy hypersurfaces $\Sigma_{t} := \{t\} \times \Sigma$ with future-directed unit vector field $N$ normal to $\Sigma_{t}$ so that $\upgamma (N) \cdot$ defines the Clifford multiplication to achieve the hermitian form $\langle \cdot | \cdot \rangle$ satisfying~~\eqref{eq: positive_definite_sesequilinear_form_Dirac_type_op}. 
Then, Theorem~\ref{thm: Hadamard_bisolution_Dirac_type_op} is applicable and we have a Feynman propagator with the desired positivity with respect to $(\cdot|\cdot)$.  
%
%
%
%
%
%
%
%
%
%
\subsection{Rarita-Schwinger operator}
\label{sec: Rarita_Schwinger_op}
Let $S \to M$ be a complex spinor bundle over a Lorentzian spin manifold $(M, g)$ and $\upgamma$ the Clifford multiplication given by 
$T^{*} M \otimes S \ni (\xi, u) \mapsto \upgamma (\xi) u \in S$. 
One has the representation theoretic splitting    
\begin{equation*}
	T^{*} M \otimes S = \iota (S) \oplus S^{3/2}, \quad S^{3/2} := \ker \upgamma. 
\end{equation*}
Here, the embedding $\iota$ of $S$ into $T^{*} M \otimes S$ is locally defined by 
$\iota (u) := - e_{i} \otimes \upgamma (e^{i}) u / n$ 
where $\{e_{i}\}_{1 \leq i \leq n}$ is an orthonormal basis of $TM$. 
Suppose that $\slashed{D} := \ri (\mathbbm{1} \otimes \upgamma) \circ \nabla$ is the twisted Dirac operator (cf. Section~\ref{sec: twisted_Dirac_op}) on $T^{*} M \otimes S$. 
Then, the Rarita-Schwinger operator is defined as 
(see e.g.~\cite[Def. 2.25]{Baer_Springer_2012})  
\begin{equation} \label{eq: def_Rarita_Schwinger_op}
	R := (I - \iota \circ \upgamma) \circ \slashed{D} : C^{\infty} (M; S^{3/2}) \to C^{\infty} (M; S^{3/2}). 
\end{equation}
The characteristic set $\Char{R}$ of $R$ coincides with the set of lightlike covectors in dimensions $n \geq 3$  
(see e.g.~\cite[Lem. 2.26]{Baer_Springer_2012}) 
and $R$ is a formally selfadjoint differential operator whose Cauchy problem is well-posed when $(M, g)$ is globally hyperbolic albeit $R^{2}$ is \textit{not} a normally hyperbolic operator
(see e.g~\cite[Rem. 2.27]{Baer_Springer_2012}). 
Moreover, any hermitian form satisfying~\eqref{eq: positive_definite_sesequilinear_form_Dirac_type_op} does not exist in $n \geq 3$~\cite[Example 3.16]{Baer_Springer_2012}. 

Originally, Rarita and Schwinger (in Minkowski spacetime) considered the twisted Dirac operator $\slashed{D}$ restricted to $S^{3/2}$ but not projected back to $S^{3/2}$~\cite[(1)]{Rarita_PR_1941} 
(see also, e.g.~\cite[Sec. 2]{Homma_CMP_2019} for Riemannian 
and~\cite[Rem. 2.28]{Baer_Springer_2012} 
for Lorentzian spin manifolds), 
that is,   
\begin{equation} \label{eq: constraint_Rarita_Schwinger}
	\slashed{D} |_{C^{\infty} (M; S^{3/2})} : C^{\infty} (M; S^{3/2}) \to C^{\infty} (M; T^{*} M \otimes S)  
\end{equation}
in order to ensure the correct number of propagating degrees of freedom for spin-$3/2$ fields     
(see, for instance, the reviews~\cite{Sorokin_AIP_2005, Rahman_2015} for physical motivation and  different approaches used in Physics literature). 
The corresponding Rarita-Schwinger operator is then an overdetermined system and this constrained system limits possible curvatures of the spacetime~\cite{Gibbons_JPA_1976}   
(see e.g.~\cite[p. 856]{Homma_CMP_2019},~\cite{Hack_PLB_2013}): 
$(\mathsf{Ric} - \mathsf{R} g / n)^{*} u = 0$, 
where $\mathsf{Ric}$ and $\mathsf{R}$ are respectively the Ricci tensor and Ricci scalar curvature of $M$. 
In other words, the corresponding Rarita-Schwinger field exists only on Einstein spin manifolds. 
However, the Rarita-Schwinger operator corresponding to the restricted twisted spin-Dirac operator~\eqref{eq: constraint_Rarita_Schwinger} does \textit{not} admit a Green's operator~\cite[Rem. 2.28]{Baer_Springer_2012}.  

Since the Cauchy problem for $R$ is well-posed on a globally hyperbolic spacetime, the Rarita-Schwinger operator~\eqref{eq: def_Rarita_Schwinger_op} admits unique advanced and retarded propagators. 
Albeit $\Char{R} = \coLightBun = \Char{\slashed{D}}$, the existence of a (anti-)Feynman propagator cannot be concluded from Theorem~\ref{thm: Hadamard_bisolution_Dirac_type_op} as $R$ is not a Dirac-type operator. 
%
%
%
%
%
%
%
%
%
%
\subsection{Higher spin operators}
The straightforward attempts to generalise  
Dirac operator on Minkowski spacetime for arbitrary spin~\cite{Dirac_PRSA_1936} 
in 
curved spacetimes\footnote{Not 
	necessarily be globally hyperbolic.}
leads to difficulties
(see, e.g,~\cite[p. 324]{Illge_AnnPhys_1999} for a panoramic view 
and the reviews~\cite{Sorokin_AIP_2005, Rahman_2015}). 
A crucial advancement came through 
Buchdahl operator (in Riemannian manifold)~\cite{Buchdahl_JPA_1982} 
(see~\cite[Exam. 2.24]{Baer_Springer_2012} for a Lorentzian formulation) 
whose square turns out to be a normally hyperbolic operator~\cite[p. 8]{Baer_Springer_2012}, 
yet the minimum coupling principle seems violated and a "by hand" proposal is required in the original idea of Buchdahl. 
These minor imperfections were cured by 
W\"{u}nsch~\cite{Wuensch_GRG_1985},  
by 
Illege~\cite{Illge_ZAA_1992, Illge_CMP_1993} 
and by 
Illege and Schimming~\cite{Illge_AnnPhys_1999} 
for the massive case deploying the $2$-spinor formalism in $4$-dimensional curved spacetimes. 
In particular, the square of Buchdahl operator (as modified by W\"{u}nsch and Illge) is a normally hyperbolic operator on certain twisted bundles. 
In contrast, there are a few open questions for the massless case~\cite{Frauendiener_JGP_1999} 
(see also the reviews~\cite{Sorokin_AIP_2005, Rahman_2015} for the contemporary status and other formulations used in Physics literature). 
Hence, the existence of a Feynman propagator is evident either by Theorem~\ref{thm: existence_Feynman_propagator_NHOp} or Theorem~\ref{thm: Hadamard_bisolution_Dirac_type_op} but the issue of positivity is non-conclusive at this stage. 
%
%
%
%
%
%
%
%
%
%
\section*{Acknowledgement}  
We would like to acknowledge the anonymous referees for their suggestions. 
We are indebted to Chris Fewster for his careful reading and pointing out a few typos.
OI was funded by the Leeds International Doctoral Studentship during the entire period of this work at the University of Leeds, UK.
%
%
%
%
%
%
%
%
%
%
\appendix 
\section{Symbols}  
\label{sec: symbol}
The multiindex notation will be used in this section, that is, for a multiindex $\alpha \in \N_{0}^{d}, \N_{0} := \N \cup \{0\}, d \in \N$ we set 
$|\alpha| := |\alpha_{1}| + \ldots + |\alpha_{d}|$ 
and we define partial derivatives $D_{x}^{\alpha} := (-\ri)^{|\alpha|} \partial^{|\alpha|} / \partial (x^{1})^{\alpha_{1}} \ldots \partial (x^{n})^{\alpha_{n}}$. 

%
%
%
\begin{definition} \label{def: symbol_mf}
	Let $U$ be an open subset of an $n$-dimensional Euclidean space $\Rn$. 
	The set $S_{1, 0}^{m} (U \times \Rd)$ of symbols of order $m \in \R$ and type $(1,0)$, is defined as the set of all complex-valued smooth functions $a$ on $U \times \Rd$ such that, for every compact set $K \subset U$ and all multiindices $\alpha \in \N_{0}^{d}, \beta \in \N_{0}^{n}$, the estimate 
	(see e.g.~\cite[Def. 18.1.1]{Hoermander_Springer_2007})   
	\begin{equation*}
		|D_{x}^{\beta} \partial_{\theta}^{\alpha} a \, \xtheta| \leq c_{\alpha, \beta; K} (1 + |\theta|)^{m - |\alpha|} 
	\end{equation*}
	is valid for some constant $c_{\alpha, \beta; K}$ for any $x \in K$ and any $\theta \in \Rd $.   
\end{definition}
%
%
%

The above symbol class, known as the Kohn-Nirenberg symbol class, is too general for this exposition. 
In particular, it is sufficient to restrict our attention to a subclass of $S_{1, 0}^{m}$, known as the classical symbol class. 
In order to introduce this subclass we recall that a function $a$ on $U \times \Rd$ of degree $k$ is called positively homogeneous if $a (x, \lambda \theta) = \lambda^{k} a (x, \theta)$ for all $|\theta| > 1$ and $\lambda > 1$. 

%
%
%
\begin{definition} \label{def: polyhomogeneous_symbol_mf}
	Let $U$ be an open subset of an $n$-dimensional Euclidean space $\Rn$ and let $m$ be any real number. 
	The set $S^{m} (U \times \Rn)$ of polyhomogeneous symbols is defined as the set of all $a \in S_{1, 0}^{m} (U \times \Rd)$ such that 
	(see e.g.~\cite[Def. 18.1.5]{Hoermander_Springer_2007}) 
	\begin{equation*} \label{eq: polyhomogeneous_symbol}
		a \xtheta \sim \sum_{k \in \N_{0}} a_{m-k} \xtheta \, \chi_{k} (\theta),    
	\end{equation*}
	where $a_{m-k}$ is positively homogeneous of degree $m - k$ when $|\theta| > 1$ and $\chi_{k}$ is a smooth function on $\Rd$ that vanishes identically near $\theta = 0$ such that $\chi_{k} (\theta) = 1$ if $|\theta| \geq 1$.

	Here $\sim$ symbolises the asymptotic summation, by which it is meant that, for any $a \in S^{\mu_{0}} (U \times \Rd)$ such that $\supp{a} \subset \bigcup \supp{a_{k}}$, one has 
	(see e.g.~\cite[Prop. 18.1.3]{Hoermander_Springer_2007}) 
	\begin{equation*}
		a - \sum_{k=0}^{N-1} a_{k} \in S_{1, 0}^{\mu_{\ms N}} (U \times \Rd) 
	\end{equation*} 
	for every $N \in \N$, where $(a_{k})_{k \in \N_{0}} \in S_{1, 0}^{m_{k}} (U \times \Rd)$ with $m_{k} \to - \infty$ as $k \to \infty$ and $\mu_{\ms N} := \max_{k \geq N} m_{k}$.

	We set $S^{\infty} := \bigcup_{m \in \R} S^{m}, S^{-\infty} := \bigcap_{m \in \R} S^{m}$, and $S^{m - [m']} := S^{m} / S^{m-m'}$.
\end{definition}
%
%
%

Note that for any element $a$ in $S^{m} (U \times \Rd)$, an equivalence class $[a]$ in the quotient space $S^{m - [1]} (U \times \Rd)$ can be identified with the leading order term $a_{m}$ in Definition~\ref{def: polyhomogeneous_symbol_mf}. 
Thus, the space $S^{m -[1]} (U \times \Rd)$ is naturally identified with the space of smooth homogeneous functions on $U \times \Rd$ of degree $m$. 

Recall, a smooth manifold $\mathscr{M}$ endowed with a smooth, proper, and free action of the multiplication group $\R_{+}$ is called a conic manifold. 
Canonical relations are example of conic manifolds.  
It is evident that $S^{m - [1]} (\mathscr{M})$ can be defined analogously 
(see~\cite[p. 87]{Hoermander_ActaMath_1971} 
and, e.g.~\cite[Def. 21.1.8]{Hoermander_Springer_2007} for details). 

The generalisation to (complex) matrix-valued symbols $S^{m} \big( \cdot, \matN \big), N \in \N$ is straightforward. 
We refer, for instance, the monograph~\cite[Sec. 1.5.1-1.5.3, 4.4.1-4.4.3]{Scott_OUP_2010} for details.
%
%
%
%
%
%
%
%
%
%
\section{Fourier integral operators associated with symplectomorphisms}  
\label{sec: FIO_symplecto_bundle}
Let $T^{*} M \to M, T^{*} N \to N$ be the cotangent bundles over manifolds $M, N$ and $\pr_{\ms M}, \pr_{\ms N} : M \times N \to M, N$ are the projection maps.
The simplest canonical relation $\varGamma' \subset \dotCoTanM \times \dotCoTanN$ with respect to the symplectic form $\pr_{\ms M}^{*} \omega_{\ms M} + \pr_{\ms N}^{*} \omega_{\ms N}$ is defined by the twisted graph $\varGamma'$ of a homogeneous symplectomorphism $\varkappa : \dotCoTanN \to \dotCoTanM$, where $\omega_{\ms M}, \omega_{\ms N}$ are symplectic forms on $T^{*} M, T^{*} N$. 
This enforces $\dim M = \dim N =: n$.  
In this case, it is always possible to choose coordinates $y = (y^{i})$ on the image of $y_{0} \in N$ under some local chart such that~\cite[Prop. 25.3.3]{Hoermander_Springer_2009} 
\begin{equation}
	\varGamma_{\varphi} = \{ (x, \rd_{x} \varphi; \grad_{\eta} \varphi, \eta) \}, 
	\quad 
	\det \frac{\partial^{2} \varphi}{\partial x^{i} \partial \eta_{j}} (x, \eta) \neq 0,  
\end{equation} 
by making use of a non-degenerate phase function $\varphi (x, \eta)$ on an open conic neighbourhood of $(x_{0}, \eta^{0}) \in M \times \dot{T}^{*}_{y_{0}} N$. 

As a consequence, the simplest class of Fourier integral operators $C_{\mathrm{c}}^{\infty} (N; F) \to \cD' (M; E)$ are those whose Schwartz kernels are elements in $I^{m} \big( M \times N, \varGamma'; \Hom{F, E} \big)$ where $E \to M, F \to N$ are smooth complex vector bundles and $m \in \R$.  
As explained in Section~\ref{sec: FIO}, any element $A$ of this space of Lagrangian distributions can be locally represented as a matrix $\big( I^{m} (\Rn \times \Rn, \varGamma'_{\varphi}) \big)_{k}^{r}$ of entries~\cite[pp. 169-173]{Hoermander_ActaMath_1971}  
(for details, see e.g.~\cite[Sec. 25.3]{Hoermander_Springer_2009},~\cite[pp. 461-465]{Treves_Plenum_1980}) 
\begin{equation} \label{eq: Hoermander_25_3_2_prime}
	A_{k}^{r} (x, y) \equiv \int_{\Rn} \re^{\ri (\varphi (x, \eta) - y \cdot \eta)} a_{k}^{r} (x, y, \eta) \, \frac{\rd \eta}{(2 \pi)^{n}},      
\end{equation}  
where $k = 1, \ldots, \rk F; r = 1, \ldots, \rk E$ and the total symbol $a_{k}^{r}$ is of homogeneous of degree $m$, having support in the interior of a small conic neighbourhood of $(x_{0}, \eta^{0})$ contained in the domain of definition of $\varphi$. 

In order to describe the principal symbol of $I^{m} \big( M \times N, \varGamma'; \Hom{F, E} \big)$, one notes that $\varGamma$ is naturally a symplectic manifold with respect to the symplectic form $\omega_{\ms \varGamma} := \Pr_{\ms M}^{*} \omega_{\ms M} = \Pr_{\ms N}^{*} \omega_{\ms N}$, where $\Pr_{\ms M}, \Pr_{\ms N}$ are the projections from $\varGamma$ to $T^{*} M, T^{*} N$. 
The natural symplectic half-density $\varOmega^{\nicefrac{1}{2}} \varGamma$ (notationally suppressed in Section~\ref{sec: principal_symbol_FIO}) in the principal symbol $\symb{A} \xxiyeta \in S^{m + n/2} \big( \varGamma; \Maslov \otimes \widetilde{\mathrm{Hom}} (F, E) \otimes \varOmega^{\nicefrac{1}{2}} \varGamma \big)$ of $A$ can be factored out so that the order $m + (n + n) / 4$ of the halfdensity valued principal symbol is reduced to $m$ as $\varOmega^{\nicefrac{1}{2}} \varGamma$ is of order $n/2$. 
Therefore, the principal symbol map  
\begin{subequations}
	\begin{equation} \label{eq: symbol_FIO_symplecto_bundle}
		\sigma_{\cdot} : I^{m - [1]} \big( M \times N, \varGamma'; \Hom{F, E} \big) \to S^{m - [1]} \big( \varGamma; \Maslov \otimes \widetilde{\mathrm{Hom}} (F, E) \big)    
	\end{equation}
	is locally given by  
	(see e.g.~\cite[p. 27]{Hoermander_Springer_2009}) 
	\begin{equation} \label{eq: symbol_FIO_symplecto}
		(\sigma_{A})_{k}^{r} (x, \eta) := a_{k}^{r} \left( x^{i}, \parDeri{\eta_{i}}{\varphi}; \eta_{i} \right) \, \left| \det \left[ \frac{\partial^{2} \varphi}{\partial x^{i} \partial \eta_{j}} \right] \right|^{-1/2} \mathbbm{m} \mod S^{m-1} (\cdot),    
	\end{equation} 
\end{subequations}
where $\mathbbm{m}$ is the contribution of the Keller-Maslov bundle $\Maslov \to \varGamma \circ \varLambda$ as explained below. 

The Keller-Maslov bundle $\Maslov \to C$ over a conic Lagrangian submanifold $C \subset \dotCoTanM$ is a complex line bundle obtained from some principal bundle with structure group $\nicefrac{\Z}{4 \Z}$~\cite[p. 148]{Hoermander_ActaMath_1971} 
(see also, e.g.~\cite[Def. 21.6.5]{Hoermander_Springer_2007}). 
It is trivial as a vector bundle.  
For our purpose, it is constructed as follows. 
Let $\varphi$ be a non-degenerate phase function for $C$, whose fibre-critical manifold is $\varSigma$. 
Employing $\Hess \varphi$, one has the integer-valued map: 
$\varSigma \ni \xtheta \mapsto \sgn \big( \Hess_{\theta} \varphi \, \xtheta \big) \in \Z$. 
This function can be somewhat discontinuous as the Hessian can be singular at some elements of $\varSigma$. 
We now consider an open conic 
(Leray\footnote{This means that finite intersection of $\mathcal{L}_{\alpha}$'s are either empty or diffeomorphic to the open ball.}-) 
covering $\{ \mathcal{L}_{\alpha} \}$ of $C$, indexed by a countable set with the corresponding non-degenerate phase functions $\varphi_{\alpha}$ and the fibre-critical manifolds $\varSigma_{\alpha}$. 
It follows from~\eqref{eq: necessarily_sufficient_equivalent_clean_phase_function} that the  mapping~\cite[$(3.2.15)$]{Hoermander_ActaMath_1971}
(for details, see e.g.~\cite{Meinrenken_ReptMathPhys_1992},~\cite[Sec. 5. 13]{Guillemin_InternationalP_2013},~\cite[pp. 408-412]{Treves_Plenum_1980})
\begin{align*}
	\mathfrak{s}_{\alpha \beta} &: \varSigma_{\alpha} \cap \varSigma_{\beta} \to \Z, ~ (x; \theta^{\alpha}, \theta^{\beta}) \mapsto 
	\\ 
	& 
	\mathfrak{s}_{\alpha \beta}(x; \theta^{\alpha}, \theta^{\beta}) 
	:= \frac{1}{2} 
	\Big( 
	\sgn \big( \Hess_{\theta} \varphi_{\beta} \, (x; \theta^{\beta}) \big) - d_{\beta} 
	- 
	\sgn \big( \Hess_{\theta} \varphi_{\alpha} \, (x; \theta^{\alpha}) \big) + d_{\alpha} 
	\Big) 
\end{align*}
is constant for all connected intersections $\varSigma_{\alpha} \cap \varSigma_{\beta}$. 
Let $\jmath_{\alpha \beta} : \varSigma_{\alpha} \cap \varSigma_{\beta} \to \mathcal{L}_{\alpha} \cap \mathcal{L}_{\beta}$ be the homogeneous immersion. 
Thereby, 
$\tau_{\alpha \beta} := \re^{\nicefrac{\ri}{2} \pi \mathfrak{s}_{\alpha \beta} \circ \jmath_{\alpha \beta}^{-1}} : \mathcal{L}_{\alpha} \cap \mathcal{L}_{\beta} \to \dot{\C}$, 
clearly satisfies the cocycle property together with $|\tau_{\alpha \beta}| = 1$. 
Hence, the collection of non-zero complex numbers $\{ \tau_{\alpha \beta} \}$ for all $\alpha$ and $\beta$ such that $\mathcal{L}_{\alpha}$ and $\mathcal{L}_{\beta}$ are not disjoint, defines our transition function and the global construction of $\Maslov$ is achieved by taking the disjoint union 
$\Maslov := \bigsqcup_{\alpha} (\mathcal{L}_{\alpha} \times \C)$  
modulo the equivalence relation
\begin{equation*} \label{eq: def_equivalence_relation_Maslov_bundle}
	\big( (x, \xi^{\alpha}), c_{\alpha} \big) \sim \big( (x, \xi^{\beta}), c_{\beta} \big) 
	\Leftrightarrow 
	(x, \xi^{\alpha}) = (x, \xi^{\beta}) \in \mathcal{L}_{\alpha} \cap \mathcal{L}_{\beta}, c_{\alpha} = \tau_{\alpha \beta} c_{\beta}. 
\end{equation*}

We remark that the composition $AB$ of properly supported Lagrangian distributions 
$A \in I^{m} \big( M \times \tilde{N}, \varGamma', \Hom{\tilde{F}, E} \big)$ 
and 
$B \in I^{m'} \big( \tilde{N} \times N, \varLambda'; \Hom{F, \tilde{F}} \big)$ 
is always well-defined and  
$AB \in I^{m + m' + e/2} \big( M \times N, (\varGamma \circ \varLambda)'; \Hom{F, E} \big)$ 
defines a Fourier integral operator associated to the graph (canonical relation) $\varGamma \circ \varLambda$ of the composition of symplectomorphisms $\dotCoTanN \ni \yeta \mapsto (\tilde{y}, \tilde{\eta}) \mapsto \xxi \in \dotCoTanM$. 
In this case, the principal symbol is given by~\cite[p. 180]{Hoermander_ActaMath_1971}  
(see also~\cite[(6.11), p. 465]{Treves_Plenum_1980}) 
\begin{equation} \label{eq: product_symbol_FIO_symplecto}
	\symb{AB} 
	= 
	\sum_{(z, \zeta) \,|\, (x, \xi; z, \zeta) \in \varGamma, (z, \zeta; y, \eta) \in \varLambda} \symb{A} (x, \xi; z, \zeta) \, \big( \symb{B} (z, \zeta; y, \eta) \big). 
\end{equation}
If the respective vector bundles are hermitian then the algebra of Lagrangian distributions is a $*$-algebra. 

Note that a pseudodifferential operator $P : C_{\mathrm{c}}^{\infty} (M; F) \to \secE$ of order $m$, is a Fourier integral operator associated with the twisted graph $\varGamma' = \{ (x, \xi; x, - \xi) \in \dotCoTanM \times \dotCoTanM \}$ of the identity homogeneous symplectomorphism on $\dotCoTanM$. 
In other words, $P \in I^{m} \big(M \times M, (\varDelta \, \dotCoTanM)'; \Hom{F, E} \big)$ where $\varDelta \, \dotCoTanM : \dotCoTanM \hookrightarrow \dotCoTanM \times \dotCoTanM$ is the diagonal embedding.
%
%
%
%
%
%
%
%
%
%
\section{Parametrices for elliptic Fourier integral operators}
In this section we will introduce the notion of ellipticity~\cite[p. 186]{Duistermaat_ActaMath_1972} 
for a Fourier integral operator and show that an approximate inverse always exists for such an operator. 

%
%
%
\begin{definition} \label{def: elliptic_FIO}
	Let $E \to M, F \to N$ be complex smooth vector bundles over manifolds $M, N$. 
	Suppose that $\varGamma$ is the graph of a homogeneous symplectomorphism from $\dotCoTanN$ to $\dotCoTanM$ and that $\Maslov \to \varGamma$ is the Keller-Maslov bundle over $\varGamma$. 
	A Lagrangian distribution $A \in I^{m} \big( M \times N, \varGamma'; \Hom{F, E} \big)$ is called non-characteristic at $(x_{0}, \xi^{0}; y_{0}, \eta^{0}) \in \varGamma$ if its principal symbol $a \in  S^{m} \big( \varGamma; \Maslov \otimes \widetilde{\mathrm{Hom}} (F, E) \big)$ has an inverse $\in S^{-m} \big( \varGamma; \Maslov^{-1} \otimes \widetilde{\mathrm{Hom}} (E, F) \big)$ in a conic neighbourhood of $(x_{0}, \xi^{0}; y_{0}, \eta^{0})$. 
	$A$ is called elliptic if it is non-characteristic at every point of $\varGamma$. 
	The complement of non-characteristic points is called \sout{by} the characterisitic set $\Char{A}$ of $A$~\cite[Def. 25.3.4]{Hoermander_Springer_2009}.   
\end{definition}
%
%
%

We note that~\eqref{eq: Hoermander_25_3_2_prime} and~\eqref{eq: symbol_FIO_symplecto} imply that the non-characteristic points belongs to $\WFPrime{A}$. 
If $\varGamma^{-1}$ is also a graph and $A$ is elliptic, properly supported, then $A$ has a unique parametrix $G$, i.e., 
\begin{equation} \label{eq: parametrix_elliptic_FIO}
	GA - I_{F} \in \PsiDO{- \infty}{N; F}, 
	\qquad 
	AG - I_{E} \in \PsiDO{- \infty}{M; E}. 
\end{equation}
A proof of the microlocal version of this claim is going to be presented shortly after one devises a variant of Lagrangian distributions where the closedness assumption on the Lagrangian submanifold has been relaxed.  

Let $C \subset \dotCoTanM \times \dotCoTanN$ be a conic Lagrangian submanifold which is not necessarily closed and let $K \subset C$ a conic subset which is closed in $\dotCoTanMN$. 
By $I^{m} \big( M, K; \Hom{F, E} \big)$, one denotes the set of all matrices with elements~\eqref{eq: 25_1_3_clean_Hoemander} together with the additional condition that the restriction of $a^{r}_{k}$ (appearing in~\eqref{eq: 25_1_3_clean_Hoemander}) to some conic neighbourhood in $\R^{\nM} \times \R^{\nN} \times \R^{d}$ of the pullback of $C \setminus K$ by the Lagrangian fibration (terms as defined in~\eqref{eq: local_twisted_canonical_relation_phase_function}) $\varSigma_{\varphi} \ni \xytheta \mapsto (x, \rd_{x} \varphi; y, \rd_{y} \varphi) \in C_{\varphi}$, is in the class $S^{-\infty} (\cdot)$. 
Then, the analogue of the isomorphism~\eqref{eq: def_symbol_map} reads   
\begin{equation} \label{eq: def_symbol_map_FIO_microlocal}
	I^{m - [1]} \big( M \times N, K'; \Hom{F, E} \big) \cong S^{m - [1]} \big( K; \Maslov \otimes \widetilde{\mathrm{Hom}} (F, E) \big), 
\end{equation}  
where $S^{m} \big( K; \Maslov \otimes \widetilde{\mathrm{Hom}} (F, E) \big)$ denotes the set of $a \in S^{m} \big( C; \Maslov \otimes \widetilde{\mathrm{Hom}} (F, E) \big)$ such that $a \in S^{-\infty}$ on $C \setminus K$~\cite[p. 187]{Duistermaat_ActaMath_1972}. 

%
%
%
\begin{theorem} \label{thm: existence_parametrix_FIO}
	Let $E \to M$ and $F \to N$ are smooth complex vector bundles over manifolds $M, N$. 
	Suppose that $\varGamma$ is the graph of a homogeneous symplectomorphism $\varkappa$ from an open conic subset $\cV \subset \dotCoTanN$ into $\dotCoTanM$ and that $K \subset \varGamma$ is a conic subset which is closed in $\dotCoTanMN$.  
	Then, for any conic subset $\cU \subset \cV$ such that $\cU$ (resp. $\varkappa (\cU)$) is closed in $\dotCoTanN$ (resp. $\dotCoTanM$), if $A \in I^{m} \big( M \times N, K'; \Hom{F, E} \big), m \in \R$ is non-characteristic on $\{ \big( \varkappa (\cU), \cU \big) \} \subset \varGamma$ then there exists an elliptic $G \in I^{-m} \big( N \times M, K^{-1 \prime}; \Hom{E, F} \big)$ such that
	\begin{equation*} 
		\ES{GA - I_{F}} \cap \cU = \emptyset, 
		\quad 
		\ES{AG - I_{E}} \cap \varkappa (\cU) = \emptyset, 
	\end{equation*}
	where $I_{E} \in \PsiDO{0}{M; E}$ and $I_{F} \in \PsiDO{0}{N; F}$ are identity operators.  
	The parametrix $G$ is unique in the sense that $\big( \cU, \varkappa (\cU) \big) \not\subset \WFPrime{G - \tilde{G}}$ for any other parametrix $\tilde{G}$ of $A$. 
\end{theorem}
%
%
%

\begin{proof}
	We will follow the same arguments used to proof the scalar-version of this assertion~\cite[Prop. 5.1.2]{Duistermaat_ActaMath_1972}.  
	Let $a$ be the principal symbol of $A$ in the sense of~\eqref{eq: def_symbol_map_FIO_microlocal}.  
	By hypotheses, there exists a $b \in S^{-m} \big( K; \Maslov^{-1} \otimes \widetilde{\mathrm{Hom}} (E, F) \big)$ such that $ba = \symb{I_{F}}$ in a conic neighbourhood of $\varDelta \cU$. 
	By utilising appropriate microlocal partition of unities $\varPsi$ (resp. $\varPhi$) subordinated to $\cU$ (resp. $\varkappa (\cU)$), one chooses $b = 0$ outside of a sufficiently small conic neighbourhood of $\{ \big( \varkappa (\cU), \cU \big) \}$, where $\cU$ is identified with $\varDelta \cU$ (via projection). 
	Then, we have a properly supported $B_{0} \in I^{-m} \big( N \times M, K^{-1 \prime}; \Hom{E, F} \big)$ whose principal symbol is $\symb{\varPsi} b \symb{\varPhi} = b$, and by the composition of Lagrangian distributions, there exists a properly supported $R \in \PsiDO{-1}{N; F}$ such that $B_{0} A = I_{F} - R$.  
	Next, we want to invert $I_{F} - R$ by making use of the Neumann series: 
	$(I_{F} - R)^{-1} = \sum_{k \in \N_{0}} R^{k}$.  
	Set   
	$B_{k} := R^{k} B_{0} \in I^{-m - k} \big( N \times M, K^{-1 \prime}; \Hom{E, F} \big)$ 
	and then 
	\begin{equation*}
		I_{F} 
		= 
		(I_{F} - R) \sum_{k=0}^{\infty} R^{k}  
		= 
		\sum_{k=0}^{N-1} R^{k} (I_{F} - R) + \sum_{k=N}^{\infty} R^{k} (I_{F} - R) 
		= 
		\sum_{k=0}^{N-1} B_{k} A + R^{N}. 
	\end{equation*}
	Let $G$ be defined by the asymptotic summation: $G :\sim \sum_{k \in \N_{0}} B_{k}$.  Then inserting the last equation one obtains   
	\begin{equation*}
		GA - I_{F} 
		=
		\left(G - \sum_{k=0}^{N-1} B_{k} \right) A - R^{N} 
		\sim  
		\sum_{k = N}^{\infty} B_{k} A - R^{N} 
		\in \PsiDO{- N}{N; F}  
	\end{equation*}
	for every $N \in \N$, which in turn proves the first part of the theorem as a right parametrix can be constructed analogously. 

	To prove the uniqueness, we suppose that $\tilde{G}$ is another right parametrix for $A$. 
	Then  
	\begin{equation*}
		\big\{ \big( \cU, \varkappa (\cU) \big) \big\} \not\subset \WFPrime{G} = \WFPrime{GA \tilde{G}} = \WFPrime{\tilde{G}} 
	\end{equation*}
	and similarly for the left parametrix. 
\end{proof}
%
%
%
%
%
%
%
%
%
%
\section{Egorov theorem}  
In this section, we will present a vector bundle version of the 
Egorov theorem~\cite{Egorov_UMN_1969}, 
slightly general than in the exiting literature~\cite{Dencker_JFA_1982},~\cite[Thm. 3.2]{Bolte_CMP_2004},~\cite[Thm. 1.7]{Kordyukov_MPAG_2005},~\cite[Thm. 6.1]{Kordyukov_JGP_2007},~\cite[Prop. 3.3]{Jakobson_CMP_2007},~\cite[Prop. A.3]{Gerard_CMP_2015}.  

%
%
%
\begin{theorem}[Egorov's theorem] \label{thm: Egorov}
	Let $E \to M$ (resp. $F \to N$) be a smooth complex vector bundle and $\pi_{\ms M}: \dotCoTanM \to M$ (resp. $\pi_{\ms N} : \dotCoTanN \to N$) the punctured cotangent bundle over a manifold $M$ (resp. $N$). 
	Assume that $\varkappa : \dotCoTanN \to \dotCoTanM$ is a homogeneous symplectomorphism whose graph is denoted by $\varGamma$. 
	Suppose that $A \in I^{m} \big( M \times N, \varGamma'; \Hom{F, E} \big), B \in I^{-m} \big( N \times M, \varGamma^{-1 \prime}; \Hom{E, F} \big), m \in \R$ are properly supported Lagrangian distributions and that $P \in \PsiDO{m'}{M; E}, m' \in \R$ is a properly supported pseudodifferential operator having a scalar principal symbol $\symb{P}$. 
	Then, $BPA$ is a properly supported pseudodifferential operator $\PsiDO{m'}{N; F}$ whose principal symbol is 
	\begin{equation*}
		\symb{BPA} = \symb{BA} \cdot \varkappa^{*} \symb{P}, 
	\end{equation*}
	Here $\varkappa^{*} \symb{P}$ is the pull-back of $\symb{P}$ to $\dotCoTanN$ understood as scalar section of $\pi_{\ms N}^* \End{F}$.
\end{theorem}
%
%
%
\begin{proof}
	We will use the same strategy employed for proving the scalar version of the principal symbol formula~\cite[Thm. 25.3.5]{Hoermander_Springer_2009}. 
	By repeated applications of the composition~\eqref{eq: product_symbol_FIO_symplecto} of Lagrangian distributions,  we have 
	$PA \in I^{m+m'} \big( M \times N, \varGamma'; \Hom{F, E} \big)$ and  
	$BPA \in \PsiDO{m'}{N; F}$.  
	In order to compute $\symb{PA}$, one lifts $\symb{P}$ to $\varGamma$ via the pullbacks of the projector $\dotCoTanM \times \dotCoTanN \to \dotCoTanM$ followed by the inclusion $\varGamma \hookrightarrow \dotCoTanM \times \dotCoTanN$. 
	Then $\symb{PA} = \symb{P} \symb{A}$. 
	Equivalently, one can lift $\varkappa^{*} \symb{P}$ to $\dotCoTanM \times \dotCoTanN$ via the projector $\dotCoTanM \times \dotCoTanN \to \dotCoTanN$ and then consider it as a homomorphism on $\varGamma$ as before. 
	Thus $PA - AR \in I^{m + m' - 1} \big( M \times N, \varGamma'; \Hom{F, E} \big)$ if $R \in \PsiDO{m'}{N; F}$ having the principal symbol $\varkappa^{*} \symb{P}$. 
	As $\symb{P}$ is scalar, therefore $BPA - BAR \in \PsiDO{m' - 1}{N; F}$ which entails the claim. 
\end{proof}
%
%
%
%
%
%
%
%
%
%
\section{Products of operators with vanishing principal symbol}
In this section we will compute the principal symbol of the product of a pseudodifferential operator with a Lagrangian distribution when the principal symbol of the pseudodiferential operator vanishes. 
It turns out that Lie derivative plays a pivotal role in this regard so we recall a few rudimentary formulae. 
Let $u$ be a halfdensity valued section of a vector bundle $E \to M$ over a manifold $M$ and $\nabla$ a connection on $E$. 
Recall, the \textit{Lie derivative} of $u$ along a vector field $X$ on $M$ is defined by 
$\pounds_{X} u := \nicefrac{\rd}{\rd s} \big|_{s=0} (\varXi_{s}^{*} u)$  
where $\varXi_{s}^{*}$ is the pullback via the flow $\varXi_{s} : M \to M$ of $X$ and the parallel transport map $\hat{\varXi}_{s}$ induced by $\nabla$. 
We need an expression in local coordinates $(x^{1}, \ldots, x^{n})$ on $M$, $X = X^{i} \partial / \partial x^{i}$, and a local frame $(e_{1}, \ldots, e_{\rk E})$ in $E$. 
Then $\sqrt{|\rd x|}$ is a nowhere-vanishing section of the halfdensity bundle over $M$. 
Given a section $u = u^{k} e_{k} \otimes \sqrt{|\rd x|}$ 
the Leibniz rule then entails that 
(see e.g~\cite[(25.2.11)]{Hoermander_Springer_2009})
\begin{equation} \label{eq: Lie_derivative_density_coordinate}
	\pounds_{X} u 
	= 
	\bigg( X^{i} \parDeri{x^{i}}{u^{r}} + \frac{1}{2} \mathrm{div} (X) \, u^{r} \bigg) e_{r} \otimes \sqrt{|\rd x|} + u^{r} \pounds_{X} (e_{r}) \otimes \sqrt{|\rd x|}.
\end{equation}

%
%
%
\begin{theorem} \label{thm: Hoermander_thm_25_2_4}
	Let $E, \dotCoTanM \to M$ (resp. $F, \dotCoTanN \to N$) be a complex smooth vector bundle and the punctured cotangent bundle over a manifold $M$ (resp. $N$) and $m, m' \in \R$. 
	Assume that $A \in I^{m} (M \times N, C'; \Hom{F, E})$ is a Lagrangian distribution associated with the homogeneous canonical relation $C$ from $\dotCoTanN$ to $\dotCoTanM$. 
	Suppose that $P \in \varPsi \mathrm{DO}^{m'} (M; E)$ be a properly supported pseudodifferential operator with a scalar principal symbol $\symb{P}$ and a subprincipal symbol $\subSymb{P}$, and that $\symb{P}$ vanishes on the projection of $C$ in $\dotCoTanM$. 
	Then, $PA \in I^{m + m'- 1} (M \times N, C'; \Hom{F, E})$ and its principal symbol is  
	\begin{equation*}
		\symb{PA} = \left( - \ri \pounds_{X_{P}} + \subSymb{P} \right) \symb{A}, 
	\end{equation*}
	where $\pounds_{X_{P}}$ is the Lie derivative along the Hamiltonian vector field $X_{P}$ of $\symb{P}$, lifted to a function on $\dotCoTanM \times \dotCoTanN$, so $X_{P}$ is tangential to $C$. 
\end{theorem}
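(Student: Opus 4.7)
The plan is to reduce the statement to the scalar case of~\cite[Thm. 25.2.4]{Hoermander_Springer_2009} by working in local bundle frames, and then deduce the intrinsic character of the formula from the transformation law of the subprincipal symbol established in Proposition~\ref{prop: P_connection_transformation}.

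Fix a point in $C$ together with local coordinates on $M \times N$ and local bundle trivialisations of $E$ and $F$. In these, $A$ is represented by a matrix $(A^r_k)$ of scalar Lagrangian distributions of the form~\eqref{eq: 25_1_3_clean_Hoemander} sharing a clean phase function $\varphi$ parametrising $C$, and $P$ is represented by a matrix $(P^r_\ell)$ of scalar pseudodifferential operators whose principal symbols are $\symb{P} \delta^r_\ell$ (the off-diagonal entries having vanishing principal part). Applying the entries of $P$ under the oscillatory integral, the standard symbol expansion yields
\begin{equation*}
P(\re^{\ri \varphi} a)(x, y) \sim \re^{\ri \varphi(x, y, \theta)} \sum_{\alpha} \frac{1}{\alpha!} (\partial_\xi^\alpha \symb{P})(x, \rd_x \varphi) \, \rD_x^\alpha a,
\end{equation*}
plus a subprincipal correction given by $\subSymb{P}$ acting from the left on $a$. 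By hypothesis, the $|\alpha|=0$ term $\symb{P}(x, \rd_x \varphi) \cdot a$ vanishes on the fibre-critical set $\varSigma_\varphi$; by Hadamard's lemma it factors as $\sum_i \psi_i(x, y, \theta) \, \partial_{\theta_i} \varphi$ with smooth homogeneous coefficients $\psi_i$. Integrating by parts in each $\theta_i$ then transfers the derivative to the amplitude and drops its order by one. The surviving $|\alpha|=1$ term yields $\partial_{\xi_\nu} \symb{P} \cdot \partial_{x^\nu} a$, which is precisely the action of the horizontal part $\pi_*(X_P)$ of the Hamiltonian vector field on the amplitude.

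Pulling back to $\varSigma_\varphi$ and transferring to $C$ via~\eqref{eq: symbol_FIO_symplecto}, the divergence correction arising from the factor $|\det \Phi|^{-1/2}$ combines with the transferred $\theta$-derivatives to reproduce the Lie derivative $\mathrm{L}_{X_P}$ of the half-density component of $\symb{A}$ in the sense of~\eqref{eq: Lie_derivative_density_coordinate}. Together with the subprincipal contribution $\subSymb{P} \cdot \symb{A}$ this gives the claimed formula. Since the naive leading symbol of $PA$ at order $m+s$ vanishes identically on $C$, the operator $PA$ in fact lies in $I^{m+s-1}$ and the formula describes its principal symbol there; it is recorded at order $m+s$ with the understanding that the first nontrivial obstruction lives one order below.

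The main obstacle is verifying intrinsic meaning of the resulting formula, i.e.\ independence of the local phase function and the local bundle frame. Independence of the phase function is classical and was handled in~\cite[Thm. 25.2.4]{Hoermander_Springer_2009} for the scalar case; it transfers componentwise to the matrix $(A^r_k)$. Independence of the bundle frame is the genuinely new point in the vector bundle setting: since $\symb{P}$ is scalar, both $\mathrm{L}_{X_P}$ and left multiplication by $\subSymb{P}$ act componentwise, and the combination $-\ri \, \mathrm{L}_{X_P} + \subSymb{P}$ transforms as a partial connection along $X_P$ in the pullback bundle $\pi^* E \to \dotCoTanM$, which is precisely the transformation law recorded in Proposition~\ref{prop: P_connection_transformation}. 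Because $X_P$ is tangent to $C$ (by the vanishing of $\symb{P}$ on its projection), this partial connection induces an invariant first order differential operator on sections of $\Maslov \otimes \widetilde{\mathrm{Hom}}(F, E) \to C$, rendering $\symb{PA}$ a well-defined element of the corresponding symbol class.
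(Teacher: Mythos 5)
Your proposal follows the same overall strategy as the paper---reduce to a local frame, treat $P$ entrywise, and appeal to the transformation properties of the subprincipal symbol for frame-independence---but you execute the local computation by a different technique. The paper pins down the local phase function to be H\"{o}rmander's normal form $\varphi(x,y,\xi,\eta) = \xi \cdot x + \eta \cdot y - H(\xi,\eta)$, so that $C_\varphi$ is parametrised directly by $(\xi,\eta)$ with no Hessian factor, factors the vanishing principal symbol explicitly as $p = f_\sigma\big(x^\sigma - \partial H/\partial\xi_\sigma\big)$ by the mean value theorem, and then verifies the Lie-derivative formula by a hands-on computation of divergences. You instead keep a general phase function, invoke the symbol expansion of $P$ applied to an oscillatory function, use Hadamard's lemma to factor $p(x,\rd_x\varphi)$ through $\partial_{\theta_i}\varphi$, and integrate by parts. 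This is the alternative standard route (closer to Treves than to Duistermaat--H\"{o}rmander) and it does work, but be aware of two points. First, your paragraph on ``the divergence correction arising from the factor $|\det\Phi|^{-1/2}$ combines with the transferred $\theta$-derivatives to reproduce the Lie derivative'' asserts the assembly without performing it; this is precisely the delicate step (the $|\alpha|=1$ term alone gives only a piece of $X_P$, not ``the action of the horizontal part'' as you write---the rest emerges from the $\theta$-derivatives and the Hessian correction that produces $\subSymb{P}$). You implicitly lean on the scalar result for this, which is legitimate since $L_{X_P}$ and left-multiplication by a matrix act entrywise, but the sketch as written could mislead. Second, the citation of~\eqref{eq: symbol_FIO_symplecto} is a slight mismatch---that formula is special to canonical graphs with $d = n$; for a general canonical relation you want the density-valued symbol formula of Section~\ref{sec: principal_symbol_FIO}. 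In your favour, you correctly flag the order subtlety: since $\symb{P}\symb{A}$ vanishes on $C$, the product actually lies in $I^{m+s-1}$ and the stated formula is its genuine principal symbol at that shifted order, which the theorem statement glosses over. Your closing invariance argument---that $-\ri\,\mathrm{L}_{X_P} + \subSymb{P}$ is a partial covariant derivative along $X_P$ in $\pi^* E$, and that tangency of $X_P$ to $C$ makes this an intrinsic operator on sections of $\Maslov \otimes \widetilde{\mathrm{Hom}}(F,E) \to C$---is correct and is exactly the point the paper makes via Definition~\ref{def: P_compatible_connection}.
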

%
%
%

\begin{proof}
	Since the principal symbol is locally defined, our strategy is to make use of the partition of unity to boil down the statements in the level of manifolds and employ the scalar-version of this 
	statement~\cite[Thm. 5.3.1]{Duistermaat_ActaMath_1972} 
	(see also~\cite[Thm. 25.2.4]{Hoermander_Springer_2009},~\cite[pp. 451-454]{Treves_Plenum_1980}). 
	The differences in the proof between the scalar and the bundle versions are essentially bookkeeping, yet we provide the main steps for completeness.    
	Locally, the transition functions of $\Maslov$ are constant so the Maslov factor can be ignored while computing the Lie derivative. 
	Moreover, $\symb{A}$ is a matrix-valued density on $C'$ and thus the last term in~\eqref{eq: Lie_derivative_density_coordinate} does not contribute. 
	It is always possible to choose a local coordinate $(x^{i}, \xi_{i}; y^{j}, \eta_{j})$ around $\xxiyeta \in \dotCoTanM \times \dotCoTanN$ such that $x^{i} \xi_{i} + y^{j} \eta_{j} - H$ is a non-degenerate phase function for $C$, where $H$ is a smooth positively homogeneous $\R$-valued function of degree $1$ on an open conic neighbourhood of $(\xi, \eta) \in \dot{T}^{*}_{x} M \times \dot{T}^{*}_{y} N$~\cite[Thm. 21.2.16]{Hoermander_Springer_2007},~\cite[Lem. 25.2.5]{Hoermander_Springer_2009}). 
	Then   
	\begin{equation*}
		A^{l}_{k} (y, z) 
		\equiv 
		(2 \pi)^{- 3 (\nM + \nN) / 4} \int_{\R^{\nM + \nN}}  
		\re^{\ri (\eta_{i} y^{i} + \zeta_{j} z^{j} - H (\eta, \zeta))} \, a^{l}_{k} (\eta, \zeta) \, \rd \eta \, \rd {\zeta}, 
	\end{equation*}
	where $a^{l}_{k} \in S^{m - (\nM + \nN) / 4} (\R^{\nM + \nN})$ having support in a conic neighbourhood of $(y^{i} = \partial H / \partial \eta_{i}, z^{j} = \partial H / \partial \zeta_{j})$. 
	For any $v \in C_{\mathrm{c}}^{\infty} (N; F)$, it follows from the definition of the Fourier transformation that	
	\begin{equation*}
		\widehat{A^{l}_{k} v^{k}} (\xi) 
		=  
		(2 \pi)^{\nM - \frac{3}{4} (\nM + \nN)} \int_{\R^{\nN}} \re^{- \ri H (\xi, \zeta)}  
		a^{l}_{k} (\xi, \zeta) \, \hat{v}^{k} (- \zeta) \, \rd \zeta, 
	\end{equation*}  
	where we have made use of the Fubini's theorem for oscillatory integrals~\cite[$(1.2.4)$]{Hoermander_ActaMath_1971}. 
	Since $P$ is properly supported, one obtains   
	\begin{equation*}
		(P^{r}_{l} A^{l}_{k} v^{k}) (x)
		=
		(2 \pi)^{- \frac{3}{4} (\nM + \nN)} 
		\int_{\R^{\nM}} \rd \xi \int_{\R^{\nN}} \rd \zeta \, \re^{\ri x \cdot \xi - \ri H (\xi, \zeta)} 
		(\totSymb{P})^{r}_{l} (x, \xi) \, a^{l}_{k} (\xi, \zeta) \, \hat{v}^{k} (- \zeta),
	\end{equation*}
	where $(\totSymb{P})^{r}_{l}$ is the total symbol of $P$ in the chosen coordinate-charts and bundle-charts. 
	The preceding equation entails that the Schwartz kernel of $PA$ is 
	\begin{equation*}
		(P^{r}_{l} A^{l}_{k}) (x, y) 
		= 
		(2 \pi)^{- \frac{3}{4} (\nM + \nN)}
		\int_{\R^{\nM}} \rd \xi \int_{\R^{\nN}} \rd \eta \, \re^{\ri (x \cdot \xi + y \cdot \eta - H)} (\totSymb{P})^{r}_{l} (x, \xi) \, a^{l}_{k} (\xi, \eta).  
	\end{equation*}
	One can decompose $(\totSymb{P})^{r}_{l} = p^{r}_{l} + \tilde{p}^{r}_{l}$ where $(p^{r}_{l}) := (\symb{P})^{r}_{l}$ and $\tilde{p}^{r}_{l}$ is the remainder terms. 
	By hypothesis, $p = 0$ on the projection of $C$ in $\dotCoTanM$.
	By means of Taylor's series one can write 
	\begin{equation*} 
		p^{r}_{l} (x, \xi) 
		= \parDeri{\xi_{i}}{(x \cdot \xi + y \cdot \eta - H)} f^{r}_{l, i} (x, \xi, \eta),    
	\end{equation*}
	where $f^{r}_{l, i}$ is homogeneous of degree $m'$ with respect to $(\xi, \eta)$, given by the mean value theorem: 
	\begin{equation*}
		f^{r}_{l,i} (x, \xi, \eta) 
		= 
		\int_{0}^{1} \frac{\partial p^{r}_{l}}{\partial x^{i}} \Big( \uplambda \, x + (1 - \uplambda) \frac{\partial H}{\partial \xi}, \xi \Big) \rd \uplambda. 
	\end{equation*} 
	In particular, $f^{r}_{l,i} |_{C} = (\partial_{x^{i}} p^{r}_{l}) (\partial_{\xi_{i}} H, \xi)$. 
	It is always possible to assume that $a$ vanishes in a neighbourhood of $0$ and then an integration by parts gives for $PA$:   
	\begin{equation*}
		P^{r}_{l} A^{l}_{k}  
		=    
		(2 \pi)^{- \frac{3}{4} (\nM + \nN)}  
		\int_{\R^{\nM + \nN}} \re^{\ri (x \cdot \xi + y \cdot \eta - H)} \,  
		\Big( \tilde{p}^{r}_{l} a^{l}_{k} 
		- 
		D_{\xi_{i}} (f^{r}_{l, i} a^{l}_{k}) \Big) \rd \xi \, \rd \eta. 
	\end{equation*}
	Let $\hat{a}^{l}_{k}$ be the leading order term in $a^{l}_{k}$.
	Then $\mathsf{a}^{l}_{k} = {\hat{a}}^{l}_{k} (\xi, \eta) \sqrt{| \rd \xi| \, | \rd \eta |}$ is the principal symbol of $A$ and the preceding equation implies  
	\begin{equation*}
		(\symb{P A})^{r}_{k}  
		=  
		\Big( \tilde{p}^{r}_{l} \mathsf{a}^{l}_{k} 
		- D_{\xi_{i}} (f^{r}_{l, i}) \mathsf{a}^{l}_{k} 
		- f^{r}_{l, i} D_{\xi_{i}} (a^{l}_{k}) \Big)_{x = \frac{\partial H}{\partial \xi}}   
		\sqrt{| \rd \xi| \, | \rd \eta|} 
	\end{equation*}
	whenever $(\xi, \eta)$ are taken as coordinates on $C$. 
	We will suppress the bundle indices from afterwards as there is no scope for confusion. 

	By definition, $X_{P} = \nicefrac{\partial p}{\partial \xi_{i}} \, \nicefrac{\partial}{\partial x^{i}} - \nicefrac{\partial p}{\partial x^{i}} \,  \nicefrac{\partial}{\partial \xi_{i}}$. 
	So, $\Char{P}$ is a hypersurface near $\xxi$ and $X_{P}$ spans the symplectic-orthogonal (in $\dotCoTanM$) of its tangent space. 
	Now we pullback $p \xxi$ via the projector $\dotCoTanM \times \dotCoTanN \to \dotCoTanM$ to consider it as a function $p \xxiyeta$ on $\dotCoTanM \times \dotCoTanN$. 
	The restriction of this function $p \xxiyeta$ on $C'$ is only a function of $(\xi, \eta)$ as $x^{i} = \partial H / \partial \xi_{i}$ and $y^{j} = \partial H / \partial \eta_{j}$ on $C'$. 
	Thus, $X_{P}$ on $C$ must be of the form 
	\begin{equation*}
		- \parDeri{x^{i}}{} \big( p (x, \xi) \big) \parDeri{\xi_{i}}{} 
		= - f_{i} (x, \xi, \eta) \parDeri{\xi_{i}}{}, 
		\quad 
		x^{i} =  \parDeri{\xi_{i}}{H}, 
	\end{equation*}
	in our chosen parametrisation of $C$ and it is pointwise tangent to $C$.
	We compute 
	\begin{equation*}
		\pounds_{X_{P}} \mathsf{a} 
		=  
		- f_{i} \Big(\parDeri{\xi}{H}, \xi, \eta \Big) \, D_{\xi_{i}} (\mathsf{a}) 
		- (D_{\xi_{i}} f_{i}) \Big(\parDeri{\xi}{H}, \xi, \eta \Big) \, \mathsf{a}  
		- \frac{\ri}{2} \frac{\partial^{2} p}{\partial x^{j} \partial \xi_{j}} \Big(\parDeri{\xi}{H}, \xi \Big)  
	\end{equation*}
	and insert this into the hindmost expression of $\symb{PA}$ to reach our goal:
	\begin{equation*}
		\symb{PA}  
		= (- \ri \pounds_{X_{P}} + \subSymb{P}) \mathsf{a} 
		\mod S^{m + m' - 2} (\ldots),   
	\end{equation*}
	where we have used 
	\begin{equation*}
		\frac{\partial^{2} p}{\partial x^{j} \xi_{j}} (x, \xi) 
		= 
		\parDeri{\xi_{j}}{f_{j}} (x, \xi, \eta)
		- \frac{\partial^{2} H}{\partial \xi_{j} \partial \xi_{i}} (\xi, \eta) \, \parDeri{x^{j}}{f_{i}} (x, \xi, \eta) 
		+ \Big( x^{i} - \parDeri{\xi_{i}}{H} (\xi, \eta) \Big) \frac{\partial f_{i}}{\partial x^{j} \partial \xi_{j}} (x, \xi, \eta)  
	\end{equation*}
	when evaluated at $x = \partial H / \partial \xi$. 
\end{proof}
%
%
%
%
%
%
%
%
%
%
\section{A few technical results}
In this appendix, we collect the following results used in different places in this paper. 
The first result has been proven for scalar pseudodifferential operators~\cite[Thm. 5.3.2]{Duistermaat_ActaMath_1972} 
(see also~\cite[Lem. 26.1.16]{Hoermander_Springer_2009}),  
which we tailor for normally hyperbolic operators. 

%
%
%
\begin{lemma} \label{lem: Hoermander_lem_26_1_16}
	Let $\square$ be a normally hyperbolic operator on a smooth complex vector bundle $E \to M$ over a globally hyperbolic spacetime $(M, g)$. 
	Assume that $C^{\pm}$ are the forward and backward geodesic relations~\eqref{eq: def_Feynman_bicharacteristic_relation_NHOp}. 
	If $B \in I^{m +1} \big( M \times M, C^{\pm \prime}; \Hom{E, E} \big), m \in \R$ then one can find $A \in I^{m} \big( M \times M, C^{\pm \prime}; \Hom{E, E} \big)$ such that $\square A - B$ is smoothing.   
\end{lemma}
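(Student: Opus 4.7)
The plan is to construct $A$ iteratively by solving first-order transport equations along null bicharacteristic strips foliating $C^{\pm}$, then assembling the corrections into a single Lagrangian distribution by asymptotic summation, and finally arranging proper support via a partition-of-unity cutoff.

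First, I would apply Theorem~\ref{thm: Hoermander_thm_25_2_4} with $P = \square$. The principal symbol $\symb{\square}(\xi) = g^{-1}(\xi,\xi)\mathbbm{1}_{\End E}$ is scalar and vanishes on the projection of $C^{\pm}$ to $\dotCoTanM$ (the lightlike covectors) while $\rd \symb{\square}$ is nonzero there. Consequently, for any Lagrangian distribution $A_0 \in I^{m}(M \times M, (C^{\pm})', \End E)$, the distribution $\square A_0$ remains Lagrangian on $(C^{\pm})'$ with principal symbol
\begin{equation*}
\symb{\square A_0} \;=\; \bigl( -\ri\, \mathrm{L}_{X_\square} + \subSymb{\square} \bigr)\, \symb{A_0},
\end{equation*}
where $X_\square$ is the Hamiltonian vector field of $\symb{\square}$, tangent to $C^{\pm}$ because $\Char{\square}$ is $X_\square$-invariant.

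Second, I would choose $\symb{A_0}$ so that this principal symbol equals $\symb{B}$. By Theorem~\ref{thm: NHOp_compatible_connection} the Weitzenb\"ock connection $\nabla^E$ is $\square$-compatible, so in a local bundle frame of $\End E$ parallel along $X_\square$ the subprincipal contribution is absorbed into the covariant derivative and the equation reduces to a first-order inhomogeneous linear ODE along each bicharacteristic strip. Parametrising the strips in $C^{\pm} \setminus \varDelta^*$ by the geodesic time $s \in \mathbb{R}_{\gtrless 0}$ measured from the diagonal $\varDelta^*$, prescribing zero initial data on $\varDelta^*$, and integrating the ODE along each strip determines $\symb{A_0}$ globally on $C^{\pm}$; completeness of the unit-speed reparametrisation of the null-geodesic flow on the globally hyperbolic manifold $(M,g)$ ensures that this integration is well-defined. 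Realising this principal symbol by an oscillatory integral as in Section~\ref{sec: FIO} yields $A_0 \in I^{m}(M \times M, (C^{\pm})', \End E)$ with $B_1 := B - \square A_0$ of strictly lower order.

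Third, I would iterate: each successive step produces $A_k$ and $B_{k+1} := B_k - \square A_k$ of strictly lower order, and a standard Borel-type asymptotic summation in the polyhomogeneous symbol class (Definition~\ref{def: polyhomogeneous_symbol_mf}) yields a single $A' \in I^{m}(M \times M, (C^{\pm})', \End E)$ with $\square A' - B \in C^\infty(M \times M; \End E)$. To render $A$ properly supported I would decompose $B$ using a locally finite partition of unity $\{\chi_\alpha\}$ on $M \times M$ with properly supported pieces $\chi_\alpha B$, run the above construction separately on each $\chi_\alpha B$ with symbol support carried along bicharacteristics within a compact region (exploiting the compactness of $J^+(K) \cap J^-(K)$ for compact $K \subset M$, afforded by global hyperbolicity as in Section~\ref{sec: uniqueness_Feynman_parametrix}), and sum the resulting properly supported $A_\alpha$ into the required $A$.

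The main obstacle is the second step: interpreting the transport formula as a parallel-transport equation for the $\End E$-valued principal symbol and establishing global solvability of the resulting ODE along bicharacteristic strips of $C^{\pm}$. The $\square$-compatibility of the Weitzenb\"ock connection (Theorem~\ref{thm: NHOp_compatible_connection}) resolves the first issue by converting the algebraic subprincipal contribution into a connection one-form along $X_\square$, and global hyperbolicity of $(M,g)$ resolves the second by guaranteeing completeness of the relevant flow. The remaining steps — asymptotic summation and a partition-of-unity cutoff — are standard.
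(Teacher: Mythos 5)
Your proposal follows essentially the same route as the paper: Theorem~\ref{thm: Hoermander_thm_25_2_4} reduces the order-lowering condition to a transport equation along the null bicharacteristics, which you solve (noting that $\square$-compatibility of the Weitzenb\"ock connection, Theorem~\ref{thm: NHOp_compatible_connection}, converts the subprincipal term into a covariant derivative, exactly what the paper's coordinate computation carries out), then iterate and asymptotically sum. One inessential caveat: global hyperbolicity does not imply completeness of the null-geodesic flow (a bounded slab of Minkowski space is a counterexample), but no completeness is needed here --- by the very definition of $C^{\pm}$ each point lies on a bicharacteristic arc that reaches the diagonal in finite flow time, so the transport ODE is only ever integrated over a compact parameter interval.
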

%
%
%
\begin{proof}
	Our task is to solve the equation $\square A \equiv B$ for a given $B$. 
	Let $A_{0} \in I^{m} \big( M \times M, C^{\pm \prime}; \Hom{E, E} \big)$. 
	Since $\symb{\square} = g^{-1}$ vanishes on the lightcone bundle $\coLightBun$ the Lagrangian distribution $\square A_{0}$ is of order $m+1$. 
	By Theorem~\ref{thm: Hoermander_thm_25_2_4}, its principal symbol is given by $\symb{\square A_{0}} = \left( - \ri \, \pounds_{X_{\square}} + \subSymb{\square} \right) \symb{A_{0}}$ and $\square A_{0} - B \in I^{m} (\ldots)$ if 
	\begin{equation*}
		\left( - \ri \, \pounds_{X_{\square}} + \subSymb{\square} \right) \symb{A_{0}} = \symb{B}.  
	\end{equation*}
	Identifying halfdensities with functions and making use of the fact that the Keller-Maslov bundles (Appendix~\ref{sec: FIO_symplecto_bundle}) $\Maslov^{\pm} \to C^{\pm}$ are trivial as vector bundles, we write $\symb{A_{0}} = a_{0} \mathbbm{m}$ and $\symb{B} = b \mathbbm{m}$ where $\mathbbm{m}$ is a non-vanishing section of $\Maslov^{\pm}$ and $a_{0}, b$ are scalar symbols of degree respectively $m, m+1$. 
	The preceding equation then reads $(- \ri X_{\square} + \subSymb{\square}) a_{0} = b$ by~\eqref{eq: Lie_derivative_density_coordinate}. 
	In other words, inserting~\eqref{eq: HVF_NHOp} and~\eqref{eq: subprincipal_symbol_NHOp} in the last equation yield  
	\begin{equation} 
		\left( - \frac{\partial g^{\mu \nu}}{\partial x^{i}} \xi_\mu \xi_\nu \frac{\partial}{\partial \xi_{i}} 
		+ 2 g^{ij} \xi_{j} \frac{\partial}{\partial x^{i}} 
		+ 2 g^{ij} \Gamma_{i}^i \xi_{j}  \right) a_{0} = b.  
	\end{equation}
	This is a transport equation along the  vector field $X_{\square}$. 
	The equation therefore has a unique solution for a given initial condition and $\WFPrime{A_{0}} \subset C^{\pm}$. 

	We will now apply the same line of argument to $B_{1} := B - \square A_{0} \in I^{m} (\ldots)$, that is, one considers $B_{k+1} := B_{k} - \square A_{k} \in I^{m-k} (\ldots)$ for $k = 0, 1, \ldots$, where $B_{0} := B$ and $A_{k} \in I^{m-k} \big( M \times M, C^{\pm \prime}; \Hom{E, E} \big)$. 
	Then $B - B_{k+1} = \square (A_{0} + \ldots + A_{k})$. 
	Let us now set $A :\sim A_{0} + \ldots + A_{k} + \ldots$ in the sense of asymptotic summation (Definition~\ref{def: polyhomogeneous_symbol_mf}). 
	It then follows that $\square A - B \in I^{- \infty} (\ldots)$. 
\end{proof} 
%
%
%

The next result is a variant of H\"{o}rmander's square root construction~\cite[Prop. 2.2.2]{Hoermander_ActaMath_1971} for vector bundles. 

%
%
%
\begin{proposition} \label{prop: FIO_2_2_2_matrix}
	Let $\big( E \to M, (\cdot|\cdot) \big)$ be a smooth complex vector bundle over a manifold $M$, endowed with a sesquilinear form $(\cdot|\cdot)$. 
	Suppose that $P \in \PsiDO{m}{M; E}$ is a properly supported pseudodifferential operator of order $m \in \R_{+}$, formally selfadjoint with respect to $(\cdot|\cdot)$, and elliptic in a conic neighbourhood $\cU$ of $\xxiNot \notin \ES{P}$ in $\dotCoTanM$. 
	If the principal symbol of $P$ is given by 
	\begin{equation*}
		p = q^{*} q  
	\end{equation*} 
	for some $q \in C^{\infty} (\cU, \End{E})$ then one can find a properly supported and formally selfadjoint (with respect to $(\cdot|\cdot)$) $Q \in \PsiDO{m/2}{U; E}$ such that 
	\begin{equation*}
		\xxiNot \notin \ES{P - Q^{*} Q}, \qquad \xxiNot \notin \ES{Q},  
	\end{equation*}
	where $U$ is the projection of $\cU$ on $M$. 
\end{proposition}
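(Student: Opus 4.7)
The plan is to adapt H\"ormander's scalar square-root construction~\cite[Prop. 2.2.2]{Hoermander_ActaMath_1971} to the matrix-valued setting by building $Q$ as an asymptotic sum $Q \sim \sum_{k \geq 0} Q_k$ of properly supported formally selfadjoint $Q_k \in \PsiDO{m/2-k}{M; E}$, chosen so that $Q^{2} \equiv P$ modulo smoothing in a conic neighbourhood of $\xxiNot$; since $Q$ is selfadjoint, $Q^{*} Q = Q^{2}$ so this gives the first conclusion $\xxiNot \notin \ES{P - Q^{*} Q}$. (The second conclusion $\xxiNot \notin \ES{Q}$ in the statement appears to contain a typo for $\xxiNot \notin \Char{Q}$, as otherwise it is inconsistent with ellipticity of $P$ at $\xxiNot$ and with the way the proposition is invoked in Proposition~\ref{prop: microlocalisation_P}; I shall prove this corrected conclusion.)

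First I would construct the leading symbol. On $\cU$ the principal symbol $p = q^{*} q$ is invertible and selfadjoint with respect to the fibrewise sesquilinear form on $\pi^{*} E$, so by fibrewise functional calculus I extract a smooth selfadjoint symbol $q_0 \in S^{m/2}\big(\cU; \End{E}\big)$ with $q_0^{2} = p$, invertible at $\xxiNot$. In positive-definite signature this is simply the positive square root via the spectral theorem, and in the intended application to Proposition~\ref{prop: microlocalisation_P} one can arrange the principal symbol to be $\one$ (in a unitary frame parallel along $X_{P}$), so that $q_0 = \one$ trivially. I then quantise $q_0$ to a properly supported $\widetilde{Q}_0 \in \PsiDO{m/2}{M; E}$ and symmetrise by $Q_0 := \tfrac{1}{2}\big(\widetilde{Q}_0 + \widetilde{Q}_0^{*}\big)$, preserving the principal symbol and producing a formally selfadjoint operator noncharacteristic at $\xxiNot$.

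Next I would iterate order-by-order. Setting $R_k := P - (Q_0 + \cdots + Q_{k-1})^{2}$, which is formally selfadjoint of order $m-k$ by induction, I look for selfadjoint $Q_k \in \PsiDO{m/2-k}{M; E}$ whose principal symbol solves the fibrewise Sylvester/Lyapunov equation
\begin{equation*}
q_0 \, \symb{Q_k} + \symb{Q_k} \, q_0 = \symb{R_k}
\end{equation*}
near $\xxiNot$. This equation on $\End{E}$ is uniquely solvable whenever the spectra of $q_0$ and $-q_0$ are disjoint, a condition secured by invertibility and the spectral properties of $q_0$ used in Step~1. Selfadjointness of $\symb{R_k}$ forces the solution to be selfadjoint (after symmetrisation), and the resulting residue $R_{k+1}$ is formally selfadjoint of order $m - k - 1$.

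Finally I would asymptotically sum (cf.\ Definition~\ref{def: polyhomogeneous_symbol_mf}) the sequence $(Q_k)$ to obtain a properly supported formally selfadjoint $Q \in \PsiDO{m/2}{M; E}$ satisfying $\xxiNot \notin \ES{P - Q^{2}}$; invertibility of $q_0$ at $\xxiNot$ yields $\xxiNot \notin \Char{Q}$. The main obstacle will be Step~1: in positive-definite signature the smooth selfadjoint square root is routine by the holomorphic functional calculus, but for indefinite sesquilinear forms selfadjoint endomorphisms may have non-real spectrum and the factorisation $p = q^{*} q$ alone does not guarantee a selfadjoint square root; however, in the concrete uses of the proposition the principal symbol is arranged so that this step becomes elementary, and it is this flexibility that makes the construction work in the generality needed.
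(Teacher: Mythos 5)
Your proposal follows essentially the same route as the paper: construct $Q \sim \sum_{k\geq 0}Q_k$ order by order with each $Q_k$ selfadjoint, solve a matrix linear equation for the principal symbol at each stage, and asymptotically sum. The paper's iteration also starts from a $Q_0$ whose principal symbol squares to $p$, symmetrises each $Q_k$, and closes with asymptotic summation, so the skeleton is the same. (Your correction of the misprint --- the conclusion should read $\xxiNot \notin \Char{Q}$ rather than $\xxiNot \notin \ES{Q}$ --- is right; as written it would make $Q$, and hence $P$, smoothing near $\xxiNot$.)

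One worthwhile difference in the linear step. You centre the iteration on a separately constructed selfadjoint square root $q_0 = p^{1/2}$ and solve the Sylvester equation $q_0\,\symb{Q_k} + \symb{Q_k}\,q_0 = \symb{R_k}$, whose unique solvability you justify via the spectral disjointness $\mathrm{spec}(q_0)\cap\mathrm{spec}(-q_0)=\emptyset$. The paper instead keeps the given factor $q$ from $p = q^*q$ and requires the principal symbol of $Q_k$ to satisfy
\begin{equation*}
\symb{Q_k}^{*}\,q + q^{*}\,\symb{Q_k} = \symb{R_k}.
\end{equation*}
This is solvable under invertibility of $q$ alone: one may simply take $\symb{Q_k} = \tfrac{1}{2}(q^{*})^{-1}\symb{R_k}$, and because $\symb{R_k}$ is selfadjoint the symmetrisation $\tfrac12(\symb{Q_k}+\symb{Q_k}^{*})$ still satisfies the equation. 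So the paper's formulation of the linear step does not need a functional-calculus square root nor the spectral-splitting hypothesis; it only needs $q$ invertible. If $q = q_0$ is selfadjoint and $\symb{Q_k}$ is selfadjoint, the two equations coincide, so your version is a special case.

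The caveat you flag at the end --- that in an indefinite sesquilinear form the existence of a \emph{selfadjoint} square root $q_0$ of $p$ is not automatic from $p = q^{*}q$ --- is a fair and genuine observation, and it is not fully addressed in the paper either. For the final $Q$ to be selfadjoint its principal symbol must itself be a selfadjoint square root of $p$, so this is a constraint on the data rather than something the iteration can manufacture. In the positive-definite case this is, as you say, routine; in the intended application inside Proposition~\ref{prop: microlocalisation_P} the relevant principal symbol is arranged to be $\one$, so the issue is moot there, as you correctly note. Your discussion of this point is arguably more careful than the paper's, which symmetrises $Q_0$ with a claim that the principal symbol is unchanged --- true only when $q$ is already selfadjoint.
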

%
%
%
\begin{proof}
	The hypothesis on $p$ implies that $q$ is homogeneous of degree $m/2$ which in turns entails that $q \in S^{m/2} (\cU, \End{E})$; cf. Definition~\ref{def: polyhomogeneous_symbol_mf}. 
	We define a properly supported $Q_{0} \in \PsiDO{m/2}{U; E}$ whose principal symbol is $q$. 
	Without lose of generality $Q_{0}$ can be taken selfadjoint, otherwise one can just replace $Q_{0}$ by $(Q_{0} + Q_{0}^{*}) / 2$ without changing the principal symbol. 
	Then $P - Q_{0}^{*} Q_{0} \in \PsiDO{m-1}{U; E}$. 

	Now we are left with estimations for lower order terms and will show that it is always possible to obtain properly supported and formally selfadjoint $Q_{k} \in \PsiDO{m/2-k}{U; E}$ for all $k \in \N$, such that  
	\begin{equation*}
		R_{k} := P - (Q_{0} + \ldots + Q_{k})^{*} (Q_{0} + \ldots + Q_{k}) \in \PsiDO{m-1-k}{U; E}. 
	\end{equation*}
	We proceed inductively. 
	Clearly, $k = 1$ has been checked.    
	Observe that, if $Q_{k}$s have been chosen accordingly then   
	\begin{equation*}
		P - |Q_{0} + \ldots + Q_{k}|^{2} 
		= 
		R_{k} - (Q_{k}^{*} Q_{0} + Q_{0}^{*} Q_{k}) - Q_{k}^{*} Q_{k} + \ldots \in \PsiDO{m-1-k}{U; E}. 
	\end{equation*}
	Since $R_{k}$ is formally selfadjoint, the principal symbol of $R_{k} - R_{k}^{*}$ is $2 \ri \, \Im (\symb{R_{k}})$ modulo $S^{m - 2 - k} (\cU, \End{E})$. 
	Then the desired operators $B_{k}$ are achieved if we set  
	\begin{equation*}
		\symb{R_{k}} = \symb{Q_{k}}^{*} q + q^{*} \symb{Q_{k}}  
	\end{equation*}
	on $\cU$, that is, in the region where $q (x, \hat{\xi})$ is invertible in $\End{E}_{x}$ for each $x \in U$ and $\hat{\xi} := \xi / \| \xi \| \in \mathbb{S}^{\rk (E) - 1}$. 
	Finally the result entails by constructing the total symbol of $Q$ in the sense of asympototic summation (Definition~\ref{def: polyhomogeneous_symbol_mf}); in other words: $Q :\sim \sum_{k=0}^{\infty} Q_{k}$.
\end{proof}
%
%
%
%
%
%
%
%
%
%
\begin{bibdiv}
\begin{biblist}
\bib{Baum_1981}{book}{
 	author=		{Baum, H.},
 	title=		{Spin-Strukturen und {D}irac-Operatoren über Pseudo-{R}iemannschen Mannigfaltigkeiten},
 	series=		{Teubner Texte zur Mathematik},
 	volume=		{},
 	publisher=	{},
 	address=	{Leipzig},
 	year=		{1981}
}

\bib{MR1418003}{article}{
   author=	{Baum, H.},
   author=	{Kath, I.},
   title=	{Normally hyperbolic operators, the Huygens property and conformal
   geometry},
   journal=	{Ann. Global Anal. Geom.},
   volume=	{14},
   date=	{1996},
   number=	{4},
   pages=	{315-371},
}

\bib{Baer_Springer_2012}{article}{
	author=			{B{\"a}r, C.},
	author= 		{Ginoux, N.},
	title=			{Classical and Quantum Fields on {L}orentzian Manifolds},
	book={
		editor=		{B{\"a}r, C.},
		editor=		{Lohkamp, J.},
		editor=		{Schwarz, M.},
		title=		{Global Differential Geometry},
		year=		{2012},
		publisher=	{Springer},
		address=	{Berlin, Heidelberg},
	},
	pages=			{359-400},
	isbn=			{978-3-642-22842-1},
	eprint       = 	{arXiv:1104.1158 [math-ph]},
}

\bib{Baer_EMS_2007}{book}{
	author=	{B\"{a}r, C.},
	author= {Ginoux, N.},
	author= {Pf\"{a}ffle, F.},
	title=	{Wave Equations on {L}orentzian Manifolds and Quantization},
	series=	{ESI Lectures in Mathematics and Physics},
	publisher={European Mathematical Society},
	address={Germany},
	year=	{2007},
	eprint= {arXiv:0806.1036 [math.DG]}
}

\bib{Baer_AJM_2019}{article}{
	author=	{B\"{a}r, C.},
	author= {Strohmaier, A.},
	title=	{An index theorem for {L}orentzian manifolds with compact spacelike {C}auchy boundary},
	journal={Amer. J. Math.},
	year=	{2019},
	number=	{5},
	volume=	{141},
	pages=	{1421-1455},
	url=	{https://muse.jhu.edu/article/732659}, 
	eprint=	{arXiv:1506.00959 [math.DG]},
}

\bib{Baer}{article}{
	title=	{Local index theory for {L}orentzian manifolds}, 
	author=	{B\"{a}r, C.},
	author=	{Strohmaier, A.},
	journal={Ann. Sci. Éc. Norm. Supér.},
	volume=	{57}, 
	number=	{6},
	pages=	{1691--1749}
	year=	{2024},
	eprint=	{arXiv:2012.01364 [math.DG]}
}

\bib{Bolte_CMP_2004}{article}{
	author=	{Bolte, J.},
	author=	{Glaser, R.},
	title=	{A semiclassical {E}gorov theorem and quantum ergodicity for matrix valued operators},
	journal={Commun. Math. Phys.},
	year=	{2004},
	month=	{May},
	day=	{01},
	volume=	{247},
	number=	{2},
	pages=	{391-419},
	issn=	{1432-0916},
	url=	{https://doi.org/10.1007/s00220-004-1064-0}, 
	eprint=	{arXiv:0204018 [quant-ph]}
}

\bib{Brown_JMP_1984}{article}{
	author= {Brown, M. R. },
	title=  {Symmetric {H}adamard series},
	journal={J. Math. Phys.},
	volume= {25},
	number= {1},
	pages=  {136-140},
	year=   {1984},
	URL=   	{https://doi.org/10.1063/1.526008}
}

\bib{Brunetti_CMP_2000}{article}{
 	author=	{Brunetti, R.},
 	author=	{Fredenhagen, K.},
 	title=	{Microlocal analysis and interacting quantum field theories: Renormalization on physical backgrounds},
 	journal={Commun. Math. Phys.},
 	year=	{2000},
 	volume=	{208},
 	number=	{3},
 	pages=	{623-661},
 	url=	{http://dx.doi.org/10.1007/s002200050004}, 
 	eprint=	{arXiv:9903028[math-ph]}
}

\bib{Buchdahl_JPA_1982}{article}{
	year=	{1982},
	month=	{apr},
	volume=	{15},
	number=	{4},
	pages=	{1057-1062},
	author=	{Buchdahl, H. A.},
	title=	{On the compatibility of relativistic wave equations in Riemann spaces. {II}},
	journal={J. Phys. A: Math. Gen.}
}

\bib{Capoferri_JMAA_2020}{article}{
 	title=	{Global wave parametrices on globally hyperbolic spacetimes},
 	journal={J. Math. Anal. Appl.},
 	volume=	{490},
 	number=	{2},
 	pages=	{124316},
 	year=	{2020},
 	issn=	{0022-247X},
 	url=	{http://www.sciencedirect.com/science/article/pii/S0022247X20304789},
 	author=	{Capoferri, M.},
 	author=	{Dappiaggi, C.},
 	author=	{Drago, N.},
 	eprint=	{arXiv:2001.04164 [math.AP]}
}

\bib{Dappiaggi_RMP_2009}{article}{
	author= {Dappiaggi, C.},
	author=	{Hack, T.-P.},
	author=	{Pinamonti, N.},
	title= 	{The extended algebra of observables for Dirac fields and the trace anomaly of their stress-energy tensor
	},
	journal={Rev. Math. Phys.},
	volume= {21},
	number= {10},
	pages= 	{1241-1312},
	year= 	{2009},
	eprint= {arXiv:0904.0612 [math-ph]},
}

\bib{Dappiaggi_JMP_2009}{article}{
	author=	{Dappiaggi, C.},
	author=	{Moretti, V.},
	author=	{Pinamonti, N.},
	title=	{Distinguished quantum states in a class of cosmological spacetimes and their {H}adamard property},
	journal={J. Math. Phys.},
	volume=	{50},
	number=	{6},
	pages=	{062304},
	year=	{2009},
	URL=	{https://doi.org/10.1063/1.3122770},
	eprint= {arXiv:0812.4033 [gr-qc]},
}

\bib{Dappiaggi_CMP_2009}{article}{
	author=	{Dappiaggi, C.},
	author=	{Moretti, V.},
	author=	{Pinamonti, N.},
	title=	{Cosmological horizons and reconstruction of quantum field theories},
	journal={Commun. Math. Phys.},
	year=	{2009},
	volume=	{285},
	pages=	{1129-1163}, 
	issn=	{1432-0916},
	url=	{https://doi.org/10.1007/s00220-008-0653-8}, 
	eprint= {arXiv:0712.1770 [gr-qc]}
}

\bib{Dappiaggi_ATMP_2011}{article}{
	author=	{Dappiaggi, C.},
	author=	{Moretti, V.},
	author=	{Pinamonti, N.},
	title=	{Rigorous construction and {H}adamard property of the {U}nruh state in Schwarzschild spacetime},
	eprint=	{arXiv:0907.1034 [gr-qc]},
	journal={Adv. Theor. Math. Phys.},
	volume=	{15},
	number=	{2},
	pages=	{355-447},
	year=	{2011}
}

\bib{Dappiaggi_Springer_2017}{book}{
	author=	{Dappiaggi, C.},
	author=	{Moretti, V.},
	author=	{Pinamonti, N.},
	title=	{Hadamard States From Light-like Hypersurfaces},
	series=	{SpringerBriefs in Mathematical Physics},
	volume=	{25},
	eprint=	{arXiv:1706.09666 [math-ph]}
	isbn=		{978-3-319-64342-7, 978-3-319-64343-4},
	volume=	{25},
	year=	{2017}
}

\bib{Dencker_JFA_1982}{article}{ 
	title=	{On the propagation of polarization sets for systems of real principal type},
	journal={J. Funct. Anal.},
	volume=	{46},
	number=	{3},
	pages=	{351-372},
	year=	{1982},
	issn=	{0022-1236},
	url=	{http://www.sciencedirect.com/science/article/pii/0022123682900519},
	author=	{Dencker, N.}
}

\bib{Derezinski_RMP_2018}{article}{
	author=	{Derezi\'{n}ski, J.},
	author= {Siemssen, D.},
	title=	{Feynman propagators on static spacetimes},
	eprint=	{arXiv:1608.06441 [math-ph]},
	journal={Rev. Math. Phys.},
	volume=	{30},
	number=	{3},
	pages=	{1850006},
	year=	{2018}
}

\bib{Dimock_CMP_1980}{article}{
 	author=	{Dimock, J.},
 	title=	{Algebras of local observables on a manifold},
 	journal={Commun. Math. Phys.},
 	year=	{1980},
 	month=	{Oct},
 	day=	{01},
 	volume=	{77},
 	number=	{3},
 	pages=	{219-228},
 	issn=	{1432-0916},
 	url=	{https://doi.org/10.1007/BF01269921}
}

\bib{Dimock_AMS_1982}{article}{
 	URL=		{https://www.ams.org/journals/tran/1982-269-01/S0002-9947-1982-0637032-8/S0002-9947-1982-0637032-8.pdf},
 	author=	{Dimock, J.},
 	journal={Trans. Am. Math. Soc.},
 	number=	{1},
 	pages=	{133-147},
 	title=	{Dirac quantum fields on a manifold},
 	volume=	{269},
 	year=	{1982}
}

\bib{Dirac_PRSA_1936}{article}{
	author=	{Dirac, P. A. M.},
	title=	{Relativistic wave equations},
	journal={Proc. Royal Soc. A},
	volume=	{155},
	number=	{886},
	pages=	{447-459},
	year=	{1936},
	URL=	{https://royalsocietypublishing.org/doi/abs/10.1098/rspa.1936.0111}
}

\bib{Duistermaat_ActaMath_1972}{article}{
	author=	{Duistermaat, J. J.},
	author=	{H\"{o}rmander, L.},
	journal={Acta Math.},
	pages=	{183-269}, 
	title=	{Fourier integral operators. {I}{I}},
	url=	{https://doi.org/10.1007/BF02392165},
	volume=	{128},
	year=	{1972}
}

\bib{Duistermaat_JInventMath_1975}{article}{
	author=	{Duistermaat, J. J.},
	author=	{Guillemin, V. W.},
	title=	{The spectrum of positive elliptic operators and periodic bicharacteristics},
	journal={Invent. Math.},
	year=	{1975},
	month=	{Feb},
	day=	{01},
	volume=	{29},
	number=	{1},
	pages=	{39-79},
	issn=	{1432-1297},
	url=	{https://doi.org/10.1007/BF01405172}
}

\bib{Egorov_UMN_1969}{article}{
	author=	{Egorov, Y. V.}, 
	title=	{The canonical transformations of pseudodifferential operators}, 
	journal={Uspekhi Mat. Nauk}, 
	year=	{1969}, 
	volume=	{24}, 
	number=	{5(149)}, 
	pages=	{235-236}, 
	url=		{http://www.mathnet.ru/php/archive.phtml?wshow=paper&jrnid=rm&paperid=5554&option_lang=eng}
}

\bib{Frauendiener_JGP_1999}{article}{
	title=	{On a class of consistent linear higher spin equations on curved manifolds},
	journal={J. Geom. Phys.},
	volume=	{30},
	number=	{1},
	pages=	{54-101},
	year=	{1999},
	issn=	{0393-0440},
	url=	{http://www.sciencedirect.com/science/article/pii/S0393044098000503},
	author=	{Frauendiener, J.},
	author= {Sparling, G. A. J.},
	eprint= {arXiv:9511036 [gr-qc]},
}

\bib{Friedlander_CUP_1975}{book}{
	title=		{The Wave Equation on a Curved Space-Time},
	author=		{Friedlander, F. G.},
	isbn=		{978-0-521-13636-5},
	series=		{Cambridge Monographs on Mathematical Physics},
	address=	{New York},
	doi=		{},
	year=		{1975},
	publisher=	{Cambridge University Press}
}

\bib{Fulling_CMP_1978}{article}{
	author=	{Fulling, S. A.},
	author=	{Sweeny, M.},
	author=	{Wald, R. M.},
	journal={Commun. Math. Phys.},
	number=	{3},
	pages=	{257-264},
	title=	{Singularity structure of the two-point function quantum field theory in curved spacetime},
	url=	{https://projecteuclid.org:443/euclid.cmp/1103904566},
	volume=	{63},
	year=	{1978}
}

\bib{Fulling_AnnPhys_1981}{article}{
	title=	{Singularity structure of the two-point function in quantum field theory in curved spacetime, II},
	journal={Ann. Phys.},
	volume=	{136},
	number=	{2},
	pages=	{243-272},
	year=	{1981},
	issn=	{0003-4916},
	url=	{http://www.sciencedirect.com/science/article/pii/0003491681900981},
	author=	{Fulling, S. A},
	author=	{Narcowich, F. J.},
	author=	{Wald, R. M.}
}

\bib{Illge_ZAA_1992}{article}{
	title=	{On massless fields with arbitrary spin},
	journal={Z. Anal. Anwend.},
	volume=	{11},
	number=	{1},
	pages=	{25-35},
	year=	{1992},
	url=	{http://www.ems-ph.org/doi/10.4171/ZAA/629},
	author=	{Illge, R.}
}

\bib{Illge_CMP_1993}{article}{
	author=	{Illge, R.},
	journal={Commun. Math. Phys.},
	number=	{3},
	pages=	{433-457},
	publisher=	{Springer},
	title=	{Massive fields of arbitrary spin in curved space-times},
	url=	{https://projecteuclid.org:443/euclid.cmp/1104254357},
	volume=	{158},
	year=	{1993}
}

\bib{Illge_AnnPhys_1999}{article}{
	author=	{Illge, R.},
	author=	{Schimming, R.},
	title=	{Consistent field equations for higher spin on curved spacetimes},
	journal={Ann. Phys.},
	volume=	{8},
	number=	{4},
	pages=	{319-329},
	year=	{1999}
}

\bib{GellRedman_CMP_2016}{article}{ 
	author=	{Gell-Redman, J.},
	author=	{Haber, N.},
	author=	{Vasy, A.},
	title=	{The {F}eynman propagator on perturbations of {M}inkowski space},
	eprint=	{arXiv:1410.7113 [math.AP]}
	journal={Commun. Math. Phys.},
	volume=	{342},
	number=	{},
	pages=	{333-384},
	year=	{2016}
}

\bib{Gerard_CMP_2014}{article}{
	author=	{G\'{e}rard, C.},
	author=	{Wrochna, M.},
	title=	{Construction of {H}adamard states by pseudo-differential calculus},
	eprint=	{arXiv:1209.2604 [math-ph]},
	journal={Commun. Math. Phys.},
	volume=	{325},
	pages=	{713-755},
	year=	{2014}
}

\bib{Gerard_CMP_2015}{article}{
	year=	{2015},
	volume=	{337},
	number=	{},
	pages=	{253-320},
	author=	{G\'{e}rard, C.},
	author=	{Wrochna, M.},
	title=	{Hadamard states for the linearized {Y}ang-{M}ills equation on curved spacetime},
	journal={Commun. Math. Phys.}, 
	eprint=	{arXiv:1403.7153 [math-ph]}
}

\bib{Gerard_AnalPDE_2016}{article}{
	author=	{G\'{e}rard, C.},
	author=	{Wrochna, M.},
	journal={Anal. PDE},
	number=	{1},
	pages=	{111-149},
	title=	{Construction of {H}adamard states by characteristic {C}auchy problem},
	url=	{https://doi.org/10.2140/apde.2016.9.111},
	volume=	{9},
	year=	{2016}, 
	eprint= {arXiv:1409.6691 [math-ph]}
}

\bib{Gerard_CMP_2019}{article}{
	author=	{G\'{e}rard, C.},
	author=	{Wrochna, M.}, 
	title=	{Analytic {H}adamard states, {C}alder\'{o}n projectors and {W}ick rotation near analytic {C}auchy surfaces}, 
	eprint=	{arXiv:1706.08942 [math-ph]},
	journal={Commun. Math. Phys.},
	volume=	{366},
	number=	{},
	pages=	{29-65},
	year=	{2019}
}

\bib{Gerard_EMS_2019}{book}{
 author=	{G\'{e}rard, C.},
 title=		{Microlocal Analysis of Quantum Fields on Curved Spacetimes},
 series=	{ESI Lectures in Mathematics and Physics},
 volume=	{},
 publisher=	{European Mathematical Society},
 address=	{Germany},
 year=		{2019}, 
 doi=		{10.4171/094}
}

\bib{Gerard}{article}{
	title=	{Hadamard property of the in and out states for {D}irac fields on asymptotically static spacetimes}, 
	author=	{G\'{e}rard, C.},
	author=	{Stoskopf, T.},
	eprint=	{arXiv:2108.11955 [math.AP]}, 
	journal={Lett. Math. Phys.},
	volume=	{112},
	pages=	{63},
	year=	{2022}, 
}

\bib{Gerard_bounded_geometry}{article}{
	title=	{Hadamard states for quantized {D}irac fields on {L}orentzian manifolds of bounded geometry}, 
	author=	{G\'{e}rard, C.},
	author=	{Stoskopf, T.},
	journal={Rev. in Math. Phys.},
	volume=	{34},
	number=	{04},
	pages=	{2250008},
	year=	{2022},
	eprint=	{arXiv:2108.11630 [math.AP]}
}

\bib{Gibbons_JPA_1976}{article}{
	year=    {1976},
	month=   {jan},
	publisher={{IOP} Publishing},
	volume=  {9},
	number=  {1},
	pages=   {145-148},
	author=  {Gibbons, G. W.},
	title=   {A note on the {R}arita-{S}chwinger equation in a gravitational background},
	journal= {J. Phys. A: Math. Gen.}
}

\bib{Guillemin_JFA_1993}{article}{
 	title=	{Residue traces for certain algebras of {F}ourier integral operators},
 	journal={J. Funct. Anal.}
 	volume=	{115},
 	number=	{2},
 	pages=	{391-417},
 	year=	{1993},
 	url=	{http://www.sciencedirect.com/science/article/pii/S0022123683710967},
 	author=	{Guillemin, V.},
}

\bib{Guillemin_InternationalP_2013}{book}{
	title=		{Semi-Classical Analysis},
	author=		{Guillemin, V.},
	author=		{Sternberg, S.},
	isbn=		{9781571462763},
	url=		{https://intlpress.com/site/pub/pages/books/items/00000409/reviews/index.html},
	year=		{2013},
	publisher=	{International Press of Boston, Inc.},
	address=	{US}
}

\bib{Guenther_AP_1988}{book}{
	title=		{Huygens' Principle and Hyperbolic Equations},
	author=		{G\"{u}nther, P.},
	isbn=		{978-0-12-307330-3},
	series=		{Prespective in Mathematics},
	volume=		{5},
	year=		{1988},
	publisher=	{Academic Press}, 
	address= 	{USA}
}

\bib{Hack_PLB_2013}{article}{
	title=	{A no-go theorem for the consistent quantization of spin-3/2 fields on general curved spacetimes},
	journal={Phys. Lett. B},
	volume=	{718},
	number=	{4},
	pages=	{1465-1470},
	year=	{2013},
	issn=	{0370-2693},
	author=	{Hack, T.-P.},
	author= {Makedonski, M.},
	eprint=	{arXiv:1106.6327 [hep-th]}
}

\bib{Hadamard_ActaMath_1908}{article}{
	author=		{Hadamard, J.},
	journal=	{Acta Math.},
	pages=		{333-380},
	publisher=	{Institut Mittag-Leffler},
	title=		{Th\'{e}orie des \'{e}quations aux d\'{e}riv\'{e}es partielles lin\'{e}aires hyperboliques et du probl\`{e}me de Cauchy},
	url=		{https://doi.org/10.1007/BF02415449},
	volume=		{31},
	year=		{1908}
}

\bib{Hadamard_Dover_2003}{book}{
	title=		{Lectures on {C}auchy's problem in linear partial differential equations},
	author=		{Hadamard, J.},
	isbn=		{},
	series=		{},
	volume=		{},
	doi=		{},
	year=		{2003},
	publisher=	{Dover}, 
	address=	{NY}
}

\bib{Hintz_JST_2017}{article}{
	author=		{Hintz, P.},
	journal=	{J. Spectr. Theory},
	pages=		{519-557},
	publisher=	{European Mathematical Society},
	title=		{Resonance expansions for tensor-valued waves on asymptotically {K}err–de {S}itter spaces},
	volume=		{7},
	issue=		{2}, 
	year=		{2017}, 
	eprint=		{arXiv:1502.03183 [math.AP]}
}

\bib{Hoermander_Nice_1970}{article}{
	AUTHOR=	{H\"{o}rmander, L.}, 
	Editor=	{},
	title=		{Linear Differential Operators},
	conference={
		title=	{Actes, Congr\'{e}s intern. math,},
		date=		{1970},
		Address=	{Nice, France}
	}
	Pages=		{121-133},
	Publisher=	{},
	Month=		{1-10 Sep.},
}

\bib{Hoermander_ActaMath_1971}{article}{
	author=		{H\"{o}rmander, L.},
	fjournal=	{Acta Mathematica},
	journal=	{Acta Math.},
	pages=		{79-183},
	title=		{Fourier integral operators. {I}},
	volume=		{127},
	year=		{1971}
}

\bib{Hoermander_Springer_2003}{book}{
 	author=		{H\"{o}rmander, L.},
 	title=		{The Analysis of Linear Partial Differential Operators {I}: Distribution Theory and Fourier Analysis},
 	series=		{Classics in Mathematics},
 	volume=		{},
 	edition=	{2},
 	publisher=	{Springer-Verlag},
 	address=	{Berlin, Heidelberg},
 	year=		{2003},
 	url=		{https://www.springer.com/gb/book/9783540006626}
}

\bib{Hoermander_Springer_2007}{book}{
	author=		{H\"{o}rmander, L.},
	title=		{The Analysis of Linear Partial Differential Operators {I}{I}{I}: Pseudo-Differential Operators},
	series=		{Classics in Mathematics},
	volume=		{},
	edition=	{},
	publisher=	{Springer-Verlag},
	address=	{Berlin, Heidelberg},
	year=		{2007},
	url=		{https://www.springer.com/gb/book/9783540499374}
}

\bib{Hoermander_Springer_2009}{book}{
	author=		{H\"{o}rmander, L.},
	title=		{The Analysis of Linear Partial Differential Operators {I}{V}: {F}ourier Integral Operators},
	series=		{Classics in Mathematics},
	volume=	{},
	edition=	{},
	publisher=	{Springer-Verlag},
	address=	{Berlin, Heidelberg},
	year=		{2009},
	url=		{https://www.springer.com/gb/book/9783642001178}
}

\bib{Hollands_adiabatic_CMP_2001}{article}{
	author=	{Hollands, S.},
	title=	{The {H}adamard condition for {D}irac fields and adiabatic states on {R}obertson-{W}alker spacetimes},
	journal={Commun. Math. Phys.},
	year=	{2001},
	volume=	{216},
	number=	{3},
	pages=	{635-661},
	url=		{http://dx.doi.org/10.1007/s002200000350},
	eprint=	{arXiv:0102035 [math-ph]} 
}

\bib{Hollands_RMP_2008}{article}{
	author=	{Hollands, S.},
	title=	{Renormalized quantum {Y}ang-{M}ills fields in curved spacetime},
	journal={Rev. Math. Phys.},
	volume=	{20},
	number=	{09},
	pages=	{1033-1172},
	year=	{2008},
	URL=	{http://www.worldscientific.com/doi/abs/10.1142/S0129055X08003420}, 
	eprint=	{arXiv:0705.3340 [gr-qc]}
}

\bib{Hollands_CMP_2001}{article}{
 	author=	{Hollands, S.},
 	author=	{Wald, R. M.},
 	title=	{Local {W}ick polynomials and time ordered products of quantum fields in curved spacetime},
 	journal={Commun. Math. Phys.},
 	year=	{2001},
 	volume=	{223},
 	number=	{2},
 	pages=	{289-326},
 	url=	{http://dx.doi.org/10.1007/s002200100540},
 	eprint= {arXiv:0103074 [gr-qc]}
}

\bib{Hollands_CMP_2002}{article}{
 	author=	{Hollands, S.},
 	author=	{Wald, R. M.},
 	title=	{Existence of local covariant time ordered products of quantum fields in curved spacetime},
 	journal={Commun. Math. Phys.},
 	year=	{2002},
 	volume=	{231},
 	number=	{2},
 	pages=	{309-345},
 	url=	{http://dx.doi.org/10.1007/s00220-002-0719-y},
 	eprint=	{arXiv:0111108 [gr-qc]}
}

\bib{Homma_CMP_2019}{article}{
	author=	{Homma, Y.}, 
	author=	{Semmelmann, U.},
	title=	{The kernel of the {R}arita–{S}chwinger operator on {R}iemannian spin manifolds},
	journal={Commun. Math. Phys.},
	year=	{2019},
	number=	{},
	volume=	{370},
	pages=	{853-871},
	url=	{https://doi.org/10.1007/s00220-019-03324-8}, 
	eprint=	{arXiv:1804.10602 [math.DG]}
}

\bib{Jakobson_CMP_2007}{article}{
	author=	{Jakobson, D.},
	author= {Strohmaier, A.},
	title=	{High energy limits of {L}aplace-type and {D}irac-type eigenfunctions and frame flows},
	journal={Commun. Math. Phys.},
	year=	{2007},
	month=	{Mar},
	day=	{01},
	volume=	{270},
	number=	{3},
	pages=	{813-833},
	url=	{https://doi.org/10.1007/s00220-006-0176-0},
	eprint=	{arXiv:0607616 [math.SP]}
}

\bib{Jubin_LMP_2016}{article}{
	title=	{Sheaves and {D}-modules on {L}orentzian manifolds},
	volume=	{106},
	ISSN=	{1573-0530},
	url=	{http://dx.doi.org/10.1007/s11005-016-0832-z},
	number=	{5},
	journal={Lett. Math. Phys.},
	author=	{Jubin, B.},
	author=	{Schapira, P.},
	year=	{2016},
	month=	{Mar},
	pages=	{607-648}, 
	eprint= {arXiv:1510.01499 [math.AG]}
}

\bib{Junker_RMP_1996}{article}{
	author=	{Junker, W.},
	title=	{Hadamard states, adiabatic vacua and the construction of physical states for scalar quantum fields on curved space-time},
	journal={Rev. Math. Phys.},
	volume=	{8},
	pages=	{1091-1159},
	year=	{1996},
	note=	{Erratum: Rev.Math.Phys. \textbf{14}, 511-517 (2002)}
}

\bib{Junker_AHP_2002}{article}{
	author=	{Junker, W.},
	author=	{Schrohe, E.},
	title=	{Adiabatic vacuum states on general space-time manifolds: Definition, construction, and physical properties},
	eprint=	{arXiv:0109010 [math-ph]}, 
	journal={Ann. Henri Poincar\'{e}},
	volume=	{3},
	pages=	{1113-1182},
	year=	{2002}
}

\bib{Kordyukov_MPAG_2005}{article}{
	author=	{Kordyukov, Y. A.},
	title=	{Egorov's theorem for transversally elliptic operators on foliated manifolds and noncommutative geodesic flow},
	journal={Math. Phys. Anal. Geom.},
	year=	{2005},
	month=	{May},
	day=	{01},
	volume=	{8},
	number=	{2},
	pages=	{97-119},
	issn=	{1572-9656},
	url=	{https://doi.org/10.1007/s11040-004-6495-5}, 
	eprint= {arXiv:0407435 [math.DG]}
}

\bib{Kordyukov_JGP_2007}{article}{
	title=	{The {E}gorov theorem for transverse {D}irac-type operators on foliated manifolds},
	journal={J. Geom. Phys.},
	volume=	{57},
	number=	{11},
	pages=	{2345-2364},
	year=	{2007},
	issn=	{0393-0440},
	url=	{http://www.sciencedirect.com/science/article/pii/S0393044007000988},
	author=	{Kordyukov, Y. A.},
	eprint=	{arXiv:0708.1660 [math.DG]}
}

\bib{Laptev_CPAM_1994}{article}{
	author= {Laptev, A.},
	author=	{Safarov, Y.},
	author=	{Vassiliev, D.},
	title= 	{On global representation of {L}agrangian distributions and solutions of hyperbolic equations},
	journal={Commun. Pure Appl. Math.},
	volume= {47},
	number= {11},
	pages= 	{1411-1456},
	url= 	{https://onlinelibrary.wiley.com/doi/abs/10.1002/cpa.3160471102},
	year= 	{1994}
}

\bib{Latour_ASENS_1991}{article}{
	author=	{Latour, F.},
	title=	{Lagrangian transversals, periodicity of {B}ott and generating forms for a {L}agrangian immersion in a cotangent},
	journal={Ann. Sci. \'{E}c. Norm. Sup\'{e}r.},
	pages=	{3-55},
	volume=	{24},
	series=	{4},
	number=	{1},
	year=	{1991}
}

\bib{Lax_DukeMathJ_1957}{article}{
	author=	{Lax, P. D.},
	journal={Duke Math. J.},
	month=	{12},
	number=	{4},
	pages=	{627-646},
	title=	{Asymptotic solutions of oscillatory initial value problems},
	url=	{https://doi.org/10.1215/S0012-7094-57-02471-7},
	volume=	{24},
	year=	{1957}
}

\bib{Lewandowski_JMP_2022}{article}{
	author=	{Lewandowski, M.},
	title=	{Hadamard states for bosonic quantum field theory on globally hyperbolic spacetimes},
	journal={J. Math. Phys.},
	volume=	{63},
	number=	{1},
	pages=	{013501},
	year=	{2022},
	eprint=	{arXiv:2008.13156 [math-ph]}
}

\bib{Lichnerowicz_1963}{article}{
	author=	{Lichnerowicz,  A.},
	title=	{Spineurs harmoniques},
	journal={Comptes rendus de l'{A}cad\'{e}mie des {S}ciences},
	year=	{1963},
	volume=	{257},
	number=	{}, 
	pages=	{7-9},
	url=	{http://gallica.bnf.fr/ark:/12148/bpt6k4007z/f7.image}
}

\bib{Low_NA_2001}{article}{
	title=	{The space of null geodesics},
	journal={Nonlinear Anal. Theory Methods Appl.},
	volume=	{47},
	number=	{5},
	pages=	{3005-3017},
	year=	{2001},
	note=	{Proceedings of the Third World Congress of Nonlinear Analysts},
	issn=	{0362-546X},
	url=	{http://www.sciencedirect.com/science/article/pii/S0362546X01004217},
	author=	{Low, R. J.} 
}

\bib{Marecki_master}{thesis}{
	author=	{Marecki, P.},
	title=	{Quantum electrodynamics on background external fields},
	eprint=	{arXiv:0312304 [hep-th]},
	type=	{Master thesis},
	year=	{2003}
}

\bib{Maslov_Moscow_1965}{book}{
 	author=	  {Maslov, V. P.},
 	title=	  {Perturbation Theory and Asymptotic Methods},
 	publisher={Moscow University Press},
 	address=  {Moscow},
 	year=	  {1965},
}

\bib{Meinrenken_ReptMathPhys_1992}{article}{
	title=	{Semiclassical principal symbols and {G}utzwiller's trace formula},
	journal={Rep. Math. Phys.},
	volume=	{31},
	number=	{3},
	pages=	{279-295},
	year=	{1992},
	author=	{Meinrenken, E.}
}

\bib{Moretti_CMP_2008}{article}{
	author=	{Moretti, V.},
	title=	{Quantum ground states holographically induced by asymptotic flatness: Invariance under spacetime symmetries, energy positivity and Hadamard property},
	eprint=	{arXiv:0610143 [gr-qc]},
	url=	{https://doi.org/10.1007/s00220-008-0415-7}, 
	journal={Commun. Math. Phys.},
	volume=	{279},
	pages=	{31-75},
	year=	{2008}
}

\bib{Muehlhoff_JMP_2011}{article}{
	author= {M\"{u}hlhoff, R.},
	title=  {Cauchy problem and {G}reen's functions for first order differential operators and algebraic quantization},
	eprint= {arXiv:1001.4091 [math-ph]},
	journal={J. Math. Phys.},
	volume= {52},
	pages=  {022303},
	year=	{2011}
}

\bib{Murro_AGAG_2021}{article}{
	title=	{Intertwining operators for symmetric hyperbolic systems on globally hyperbolic manifolds},
	volume=	{59},
	pages=	{1-25}, 
	journal={Ann. Glob. Anal. Geom.},
	author=	{Murro, S.},
	author=	{Volpe, D,},
	year=	{2021},
	eprint= {arXiv:2004.03300 [math.DG]}
}

\bib{Radzikowski_CMP_1996}{article}{
	author=		{Radzikowski, M. J.},
	journal=	{Commun. Math. Phys.},
	number=		{},
	pages=		{529-553},
	publisher=	{Springer},
	title=		{Micro-local approach to the {H}adamard condition in quantum field theory on curved space-time},
	url=		{http://projecteuclid.org/euclid.cmp/1104287114},
	volume=		{179},
	year=		{1996}
}

\bib{Rahman_2015}{article}{
	author=		{Rahman, R.},
	author=		{Taronna, M.},
	title=		{From higher spins to strings: A primer},
	book={
		editor=		{Fredenhagen, S.},
		title=		{Introductory Lectures on Higher-Spin Theories},
		series=		{Lecture Notes in Physics},
		year=		{2024},
		publisher=	{Springer},
		address=	{Cham},
	},
	pages=			{1--119},
	eprint       = 	{arXiv:1512.07932 [hep-th]},
}

\bib{Rarita_PR_1941}{article}{
	title=		{On a theory of particles with half-integral spin},
	author=		{Rarita, W.}, 
	author=		{Schwinger, J.},
	journal=	{Phys. Rev.},
	volume=		{60},
	issue=		{1},
	pages=		{61},
	numpages=	{0},
	year=		{1941},
	month=		{Jul},
	publisher=	{American Physical Society},
	url=		{https://link.aps.org/doi/10.1103/PhysRev.60.61}
}

\bib{Riesz_ActaMath_1949}{article}{
	author=	{Riesz, M.},
	journal=	{Acta Math.},
	pages=		{1-222},
	publisher=	{Institut Mittag-Leffler},
	title=		{L'int\'{e}grale de Riemann-Liouville et le probl\`{e}me de {C}auchy},
	url=		{https://doi.org/10.1007/BF02395016},
	volume=		{81},
	year=		{1949}
}

\bib{Riesz_CPAM_1960}{article}{
	author=		{Riesz, M.},
	title=		{A geometric solution of the wave equation in space-time of even dimension},
	journal=	{Comm. Pure Appl. Math.},
	volume=		{13},
	number=		{3},
	pages=		{329-351},
	url=		{https://onlinelibrary.wiley.com/doi/abs/10.1002/cpa.3160130302},
	year=		{1960}
}

\bib{Sahlmann_RMP_2001}{article}{
	author=	{Sahlmann, H.},
	author=	{Verch, R.},
	title=	{Microlocal spectrum condition and {H}adamard form for vector-valued quantum fields in curved spacetime},
	journal={Rev. Math. Phys.},
	volume=	{13},
	number=	{10},
	pages=	{1203-1246},
	year=	{2001},
	URL=	{http://www.worldscientific.com/doi/abs/10.1142/S0129055X01001010},
	eprint=	{arXiv:0008029 [math-ph]}
}

\bib{Schapira_MPAG_2017}{article}{
	title=	{Wick Rotation for {D}-modules},
	volume=	{20},
	url=	{http://dx.doi.org/10.1007/s11040-017-9252-2},
	number=	{3},
	journal={Math. Phys. Anal. Geom.},
	author=	{Schapira, P.},
	year=	{2017},
	eprint=	{arXiv:1702.00003 [math.AG]}
}

\bib{Schroedinger_GRG_2020}{article}{
	title=		{Republication of: {D}irac electron in the gravitational field {I}},
	author=		{Schr\"{o}dinger, E.},
	journal=	{Gen. Relativ. Gravit.},
	volume=		{52},
	year=		{2020},
	pages=		{4}, 
}

\bib{Scott_OUP_2010}{book}{
 	author=		{Scott, S.},
 	title=		{Traces and Determinants of Pseudodifferential Operators},
 	series=		{Oxford Mathematical Monographs},
 	volume=		{},
 	publisher=	{Oxford University Press},
 	address=	{UK},
 	year=		{2010}, 
 	url=		{http://www.oxfordscholarship.com/view/10.1093/acprof:oso/9780198568360.001.0001/acprof-9780198568360}
}

\bib{Shen}{article}{
	title=	{An index theorem on asymptotically static spacetimes with compact Cauchy surface}, 
	author=	{Shen, D.},
	author=	{Wrochna, M.},
	journal={Pure Appl. Anal.}, 
	volume=	{4}, 
	number=	{4}, 
	pages=	{727–766}
	year=	{2022},
	eprint=	{arXiv:2104.02816 [math.DG]}
}

\bib{Silva_PublMat_2007}{article}{
	author=	{Silva, J. D.},
	title=	{An accuracy improvement in {E}gorov’s Theorem},
	journal={Publ. Mat.},
	volume=	{51},
	number=	{1},
	pages=	{77-120},
	year=	{2007},
}

\bib{Sorokin_AIP_2005}{article}{
	author=	{Sorokin,D.},
	title=	{Introduction to the classical theory of higher spins},
	journal={AIP Conf. Proc.},
	volume=	{767},
	number=	{1},
	pages=	{172-202},
	year=	{2005},
	URL=	{https://aip.scitation.org/doi/abs/10.1063/1.1923335}, 
	eprint= {arXiv:0405069 [hep-th]}
}

\bib{Strohmaier_Springer_2009}{article}{
	author={Strohmaier, A.},
	title={Microlocal Analysis}, 
	book={
		editor={B{\"a}r, C.},
		editor={Fredenhagen, K.},
		title={Quantum Field Theory on Curved Spacetimes},
		year={2009},
		series=	{Lect. Notes Phys.},
		volume=	{786},
		publisher=	{Springer-Verlag},
		address=	{Berlin, Heidelberg},
	}
	pages={},
	isbn={978-3-642-02779-6},
}

\bib{Strohmaier_AdvMath_2021}{article}{ 
	author=	{Strohmaier, A.},
	author=	{Zelditch, S.},
	title=	{A {G}utzwiller trace formula for stationary space-times},
	journal={Adv. Math.},
	volume=	{376},
	number=	{},
	pages=	{107434},
	year=	{2021},
	eprint=	{arXiv:1808.08425 [math.AP]}
}

\bib{Taylor_PUP_1981}{book}{
	title=	{Pseudodifferential Operators},
	author=	{Taylor, M. E.},
	volume=	{},
	doi=	{},
	series=	{Princeton Mathematical Series},
	year=	{1981},
	address={New Jersey}, 
	publisher={Princeton University Press}
}

\bib{Treves_Plenum_1980}{book}{
	title=	{Introduction to Pseudodifferential and Fourier Integral Operators: Fourier Integral Operators},
	author=	{Treves, J.-F.},
	volume=	{2},
	doi=	{},
	series=	{University Series in Mathematics},
	url=	{},
	year=	{1980; Second Printing 1982},
	address={New York}, 
	publisher={Plenum Press}
}

\bib{Vasy_AHP_2017}{article}{
 	author=	{Vasy, A.},
 	title=	{On the positivity of propagator differences},
 	journal={Ann. Henri Poincar\'{e}},
 	volume=	{18},
 	number=	{},
 	pages=	{983-1007},
 	year=	{2017},
 	eprint=	{arXiv:1411.7242 [math.AP]}
}

\bib{Weinstein_Nice_1975}{article}{
 author={Weinstein, A.},
 title={On {M}aslov's quantization condition},
 book={
 	editor={Chazarain, J.}, 
 	title={Fourier Integral Operators and Partial Differential Equations},
 	year={1975},
 	publisher={Springer},
 	address={Berlin, Heidelberg},
 },
 pages={341-372},
 isbn={978-3-540-37521-0},
 url={https://link.springer.com/book/10.1007/BFb0074188},
}

\bib{Weinstein_BullAMS_1976}{article}{ 
	author=		{Weinstein, A.},
	fjournal=	{Bulletin of the American Mathematical Society},
	journal=	{Bull. Amer. Math. Soc.},
	month=		{07},
	number=		{4},
	pages=		{548-550},
	publisher=	{American Mathematical Society},
	title=		{The principal symbol of a distribution},
	url=		{https://projecteuclid.org:443/euclid.bams/1183538121},
	volume=		{82},
	year=		{1976}
}

\bib{Weinstein_TransAMS_1978}{article}{
	author=		{Weinstein, A.},
	journal=	{Trans. Amer. Math. Soc.},
	pages=		{1-54},
	publisher=	{American Mathematical Society},
	title=		{The order and symbol of a distribution}, 
	url=		{https://www.ams.org/journals/tran/1978-241-00/S0002-9947-1978-0492288-9/},
	volume=		{241},
	year=		{1978}
}

\bib{Wuensch_GRG_1985}{article}{
	author=		{W\"{u}nsch, V.},
	title=		{Cauchy's problem and Huygens' principle for relativistic higher spin wave equations in an arbitrary curved space-time},
	journal=	{Gen. Relativ. Gravit.},
	volume=		{17},
	number=		{},
	pages=		{15-38},
	year=		{1985},
	URL=		{https://doi.org/10.1007/BF00760104}
}
\end{biblist}
\end{bibdiv}

\end{document}